\theoremstyle{definition} %%% for statements in roman typeface
\theoremstyle{plain}      %%% for statements in italic typeface
\newcommand{\rp}{\mathbb{RP}}
\newcommand{\re}{\mathbb{R}}
\newcommand{\co}{\mathbb{C}}
\newcommand{\sph}{\mathbb{S}}
\newcommand{\pgl}[1]{\mathrm{PGL}(#1,\mathbb{R})}
\newcommand{\Sl}[1]{\mathrm{SL}(#1,\mathbb{R})}
\newcommand{\na}{\nabla}
\newcommand{\sfrac}[2]{{\textstyle \frac{#1}{#2}}}
\newcommand{\C}{\mathbb{C}}
\newcommand{\R}{\mathbb{R}}
\newcommand{\CP}{\mathbb{CP}}
\newcommand{\CH}{\mathbb{CH}}
\newcommand{\RH}{\mathbb{RH}}
\newcommand{\RP}{\mathbb{RP}}
\newcommand{\caA}{\mathcal{A}}
\newcommand{\caD}{\mathcal{D}}
\newcommand{\caQ}{\mathcal{Q}}
\newcommand{\so}{\mathfrak{so}}
\newcommand{\spl}{\mathfrak{sl}}
\newcommand{\su}{\mathfrak{su}}
\newcommand{\fa}{\mathfrak{a}}
\newcommand{\fg}{\mathfrak{g}}
\newcommand{\fk}{\mathfrak{k}}
\newcommand{\fm}{\mathfrak{m}}
\newcommand{\End}{\operatorname{End}}
\newcommand{\Hom}{\operatorname{Hom}}
\newcommand{\Kah}{K\" ahler\ }
\newcommand{\tr}{\operatorname{tr}}
\newcommand{\e}{\varepsilon}
\newcommand{\vol}{\operatorname{vol}}
\newcommand{\g}[2]{\langle{#1},{#2}\rangle}
\newcommand{\II}{\mathrm{I\! I}}
\newcommand{\bz}{{\bar z}}
\newcommand{\diag}{\operatorname{diag}}
\newtheorem{thm}{Theorem}[section]
\newtheorem{prop}[thm]{Proposition}
\newtheorem{lem}[thm]{Lemma}
\newtheorem{cor}[thm]{Corollary}
\theoremstyle{remark}
\newtheorem{rem}{Remark}
\numberwithin{equation}{section}
\begin{document}
\title{Cubic Differentials in the Differential Geometry of Surfaces}

\author{John Loftin and Ian McIntosh}
\thanks{
The first author's work is partially supported by a Simons Collaboration Grant for Mathematicians
210124, and by  U.S.\ National Science
Foundation grants DMS 1107452, 1107263, 1107367 ``RNMS: Geometric
structures And Representation varieties" (the GEAR Network).
} 

\address{
Department of Mathematics and Computer Science\\ Rutgers-Newark\\
Newark, NJ 07102, USA \\
email:\,{\tt loftin@rutgers.edu}
\newline\indent
Department of Mathematics\\ University of York\\
York YO10 5DD, UK \\
email:\,\tt{ian.mcintosh@york.ac.uk}
}

\maketitle

\begin{abstract}
We discuss the local differential geometry of convex affine spheres in $\re^3$ and of minimal Lagrangian surfaces in Hermitian symmetric spaces. In each case, there is a natural metric and cubic differential holomorphic with respect to the induced conformal structure: these data come from the Blaschke metric and Pick form for the affine spheres and from the induced metric and second fundamental form for the minimal Lagrangian surfaces.  The local geometry, at least for main cases of interest, induces a natural frame whose structure equations arise from the affine Toda system for $\mathfrak a^{(2)}_2$.  We also discuss the global theory and applications to representations of surface groups and to mirror symmetry.
\end{abstract}

\section{Introduction.}
Holomorphic cubic differentials naturally appear as the Pick form in the study of affine spheres in $\re^3$, and also arise from the second fundamental form for minimal Lagrangian surfaces in the symmetric spaces $\C^2$, $\CH^2$ and $\CP^2$\index{cubic differential}. In each of these cases, the natural metric (the Blaschke metric for affine spheres or the induced metric for the minimal Lagrangian surfaces) can be given conformally by $2e^{2\psi}\, |dz|^2$, where $\psi$ solves a semilinear elliptic equation of \c{T}i\c{t}eica type
\begin{equation} \label{gen-eq}
 2\psi_{z\bar z} + \epsilon |Q|^2 e^{-4\psi} +  \lambda e^{2\psi} =0,
\end{equation}
for $Q$ a holomorphic cubic differential on a simply-connected domain $\mathcal D\subset \co$. We have the following cases for the signs $\epsilon$ and $\lambda$:
$$
\renewcommand{\arraystretch}{3}
\begin{array}{|c|c|c|c|c|}
\hline
\epsilon &  \lambda & {\rm Surface} & \mbox{Group $G$} & {\rm Application}\\
\hline \hline
-1 & -1 & \mbox{minimal Lagrangian in $\CH^2$} & \mathrm{SU}(2,1)&\parbox{1in}{almost Fuchsian representations}\\[.1in] \hline
-1 & 0 &  \mbox{minimal Lagrangian in $\co^2$} &\mathrm{AU}(2) & \parbox{1in}{special Lagrangian with constant phase}\\[.1in] \hline
-1 & 1 & \mbox{minimal Lagrangian in $\CP^2$} & \mathrm{SU}(3)&\parbox{1in}{special Lagrangian cones in $\co^3$}\\[.1in] \hline
1 & -1 & \mbox{hyperbolic affine sphere}&\Sl3& \parbox{1in}{Hitchin representations}\\[.1in] \hline
1&0 & \mbox{parabolic affine sphere} & \mathrm{ASL}(2,\re)&\parbox{1in}{semi-flat Calabi-Yau on $\sph^2$}\\[.1in] \hline
1 & 1 & \mbox{elliptic affine sphere} & \Sl3&\parbox{1in}{SYZ geometry near the ``Y" vertex}\\[.1in] \hline
\end{array}
\renewcommand{\arraystretch}{1}
$$
Here $\mathrm {AU}(2)$ means the affine group with linear part $\mathrm{ U}(2)$. In each case (\ref{gen-eq}) is the zero-curvature equation for a principal $G$-bundle over the surface.  This flat bundle corresponds to a conjugacy class in the representation space ${\rm Hom}(\pi_1\Sigma,G)/G$, which is a natural generalization of the classical Teichm\"uller space when the surface $\Sigma$ has genus at least two: recall that Teichm\"uller space is a connected component of the representation space for $G=\mathrm{PSL}(2,\re)$ consisting of the Fuchsian representations.

For $\lambda=\pm1$, the structure equations for these surfaces are versions of the Toda equations for  $\fa^{(2)}_2$, the affine Dynkin diagram for $\mathrm{SL}(3,\co)$ with respect to its natural outer automorphism. The local theory of these equations goes back to \c{T}i\c{t}eica (Tzitz\'eica) \cite{tzitzeica07,tzitzeica08,tzitzeica09}, who first developed a hyperbolic analog of this system (as the structure equations for what is now known as an indefinite proper affine sphere in $\re^3$).  The remaining cases with $\lambda=0$ are completely integrable in that one can easily represent all solutions in terms of holomorphic data.

This chapter is broken into two main parts, on local and global theory.  In the local theory, the underlying integrable systems work much the same and are insensitive to changing the signs $\epsilon$ and $\lambda$. In the second part, the global theory is much more dependent on the signs, as solving (\ref{gen-eq}) on a Riemann surface often depends on the maximum principle.

The point of view we take in this chapter is to focus on global solutions of (\ref{gen-eq}) over a hyperbolic Riemann surface.
The most successful theory in this regard has been that of the hyperbolic affine spheres.  On a compact Riemann surface with background hyperbolic metric and cubic differential, there is a unique associated Blaschke metric.  Together with Cheng-Yau's powerful classification theorem for complete hyperbolic affine spheres \cite{cheng-yau77,cheng-yau86}, this gives a parametrization of the space of all convex $\rp^2$ structures on the underlying surface, and the Hitchin component of representations of the fundamental group into $\Sl3$ \cite{labourie97,labourie07,loftin01,wang91,choi-goldman93,hitchin92}.  Note the signs $\epsilon=1$ and $\lambda=-1$ are both good for using the maximum principle.

We also discuss various applications of these equations.  We discuss the geometry of minimal Lagrangian surfaces in $\CH^2$ and almost Fuchsian representations into $\mathrm{SU}(2,1)$ in Subsection \ref{ch2-subsec}, while we explain the special Lagrangian property of minimal Lagrangian surfaces in $\co^2$ in \ref{c2-subsec}.  The relationship between two-dimensional affine spheres to the Strominger-Yau-Zaslow conjecture for Calabi-Yau three-folds are recounted in \ref{slag-subsec} below, as are special Lagrangian cones in $\co^3$ derived from minimal Lagrangian surfaces in $\CP^2$. The geometry of hyperbolic affine spheres and relations to convex $\rp^2$ surfaces and Hitchin representations are covered in \ref{has-subsec}.

The equation (\ref{gen-eq}) can also be tackled using methods derived from integrable systems theory.  In the broadest sense this means local solutions can, in principle, be found using loop group factorization methods such as those described in \cite{dorfmeister-pedit-wu98}. These are not very informative in the general case, but when $Q$ is constant and $G=\mathrm{SU}(3)$, all doubly periodic solutions of (\ref{gen-eq}) can be constructed using these methods since (\ref{gen-eq}) becomes a version of the Toda field equations: see, for example, \cite{mcintosh03} and references therein. In that case, the doubly periodic solutions can be written down explicitly using abelian functions on an auxiliary Riemann surface called the \emph{spectral curve}\index{spectral curve}. Since these solutions only apply to minimal tori in $\CP^2$, we will not say much about them here.

Four of these six equations (all three affine spheres and minimal Lagrangian surfaces in $\CP^2$) are dimension reductions of special cases of special Lagrangian submanifolds of Calabi-Yau 3-folds.  The affine spheres in particular have been useful in constructing models for the Strominger-Yau-Zaslow conjecture in mirror symmetry \cite{syz,loftin04,lyz05,lyz-erratum}.

We thank the referee for useful comments.

\emph{A remark on notation.} In attempting to give a unified treatment of several different differential geometric objects on surfaces, it is perhaps inevitable that we have run into minor inconsistencies in terms of normalizing constants. In particular, we have chosen constants for the metric and cubic differential to simplify the presentation of the Toda theory, since that is a unifying principle behind this work.  These conventions differ from those the first author has made for affine spheres in the past. In particular, the presentation here of the Blaschke metric in local coordinates and of the cubic differential differ by constant factors from those in e.g. \cite{loftin01,loftin04,loftin02c, lyz05}. See Subsection \ref{two-dim} below. Moreover, we discovered a conversion error in passing between the local and global equations in \cite{loftin-mcintosh13} which is corrected here. To translate between that paper and here, the cubic differential called $U$ there is $Q/2\sqrt{2}$ in our notation here.

\section{Local theory.}

\subsection{Affine spheres} \label{affine-sph-subsec}

\subsubsection{A first example.}

\c{T}i\c{t}eica  first defined a surface closely related to our problem.  This surface is nowadays called an indefinite proper affine sphere, and involves neither a Riemann surface structure nor a cubic differential.  On the other hand, the essential integrability properties are still in place, and \c{T}i\c{t}eica's early work should be considered as the founding texts for this topic.

We very briefly recount the theory of indefinite proper affine spheres; see e.g.\ Simon-Wang \cite{simon-wang93} or Bobenko-Schief \cite{bobenko-schief99} for modern treatments. On a simply connected domain $\mathcal D\subset \re^2$, let a function $w$ satisfy \c{T}i\c{t}eica's equation
\begin{equation}
w_{xy} + e^{-2w} +Ke^w=0, \label{tzitz-eq}
\end{equation}
for $K$ a constant. Then there exists an indefinite affine sphere whose Blaschke metric is $2e^w\,dx\,dy$ with constant affine mean curvature $K$. \c{T}i\c{t}eica's equation (\ref{tzitz-eq}) is the integrability condition for the structure equations of the affine sphere.  We will not go into the geometry in this case, as we develop the case of convex affine spheres in detail below.

\subsubsection{Affine differential geometry}

Affine differential geometry is the study of the differential-geometric properties of hypersurfaces in $\re^{n+1}$ which are invariant under unimodular affine actions of $\re^{n+1}$.   These invariants are not obvious at first glance, as the usual notions of length and angle in $\re^{n+1}$ are not valid.  One way of accessing the theory is through the \emph{affine normal}\index{affine normal}, which is a third-order transverse vector field to a $C^3$ hypersurface in $\re^{n+1}$.

In order to define the affine normal, we first discuss the affine geometry a transverse vector field generates on hypersurface in $\re^{n+1}$ (see e.g.\ Nomizu-Sasaki \cite{nomizu-sasaki}).  Let $f\!: H \to \re^{n+1}$ be an immersion and $\tilde\xi\!:H\to\re^{n+1}$ a transverse vector field to $f(H)$.  For any smooth vector fields $X,Y$ on $H$, push them forward by $f_*$ and locally extend $f_*X$, $f_*Y$ and $\tilde\xi$ to smooth vector fields on $\re^{n+1}$.  Then apply the standard connection $D$ on $\re^{n+1}$ and at a point $x\in H$, use the splitting $T_{f(x)}\re^{n+1} = f_* T_xH + \langle \tilde\xi(x) \rangle$ to find
\begin{eqnarray*}
D_{f_*X}f_*Y &=& f_*(\tilde\na_X Y) + \tilde h(X,Y) \tilde \xi , \\
D_{f_*X}\tilde\xi &=& -f_*(\tilde S(X)) + \tilde \tau(X)\tilde \xi.
\end{eqnarray*}
One can easily check that $\tilde\na$ is a torsion-free connection, $\tilde h$ is a symmetric tensor, $\tilde S$ is an endomorphism of $TH$ and $\tilde\tau$ is a one-form on $H$, and that all are independent of the extensions.  Note that $D$ depends only on the affine structure on $\re^{n+1}$.  Below we suppress the $f_*$ from the notation.

Now assume $f$ is convex, or equivalently assume $\tilde h$ is definite for every transverse $\tilde\xi$. The affine normal $\xi$ can be defined by the following requirements:
\begin{itemize}
\item $h$ is positive definite.
\item $\tau = 0$.
\item For each frame of tangent vectors $X_1,\dots, X_n$, $$\det_{\re^{n+1}} (X_1,\dots,X_n,\xi)^2 = \det_{1\le i,j\le n} h(X_i,X_j).$$
\end{itemize}
The first condition means that $\xi$ points inward (so that $H$ and $\xi$ lie on the same side of the tangent plane).
Moreover, if $\tilde h$ is not definite but still nondegenerate, we can still define the affine normal up to a choice of orientation in a similar manner.

 The affine structure equations are then
\begin{eqnarray*}
D_XY &=& \na _XY + h(X,Y)\xi, \\
D_X\xi &=& -S(X).
\end{eqnarray*}
In this case, $\na$ is called the Blaschke connection\index{Blaschke connection}; $h$ is the affine second fundamental form, Blaschke metric, or affine metric\index{Blaschke metric}\index{affine metric}; and $S$ is the affine shape operator or affine third fundamental form.

Let $\hat\na$ be the Levi-Civita connection for the metric $h$. Then the \emph{cubic tensor}\index{cubic tensor}, or \emph{Pick form}\index{Pick form}, is defined to be $C=\hat\na-\na$. (This sign convention is that of \cite{loftin02c}, not that of \cite{loftin-tsui08}). In index notation, we write $C^i_{jk}$. It is immediate that $C$ is symmetric in the bottom two indices.  Moreover, $C$ satisfies the following \emph{apolarity condition}\index{apolarity condition}
\begin{lem}
$ C^i_{ij} = 0.$
 \end{lem}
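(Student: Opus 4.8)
The plan is to reinterpret the apolarity condition $C^i_{ij}=0$ as the statement that the Blaschke connection $\na$ preserves the Riemannian volume form of the Blaschke metric $h$, and then to deduce this from the determinant normalization built into the definition of the affine normal $\xi$. The conceptual point is that the third defining property of $\xi$ ties the ambient Euclidean volume to the metric volume of $h$, and it is exactly this compatibility that makes $C$ trace-free.

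First I would introduce the \emph{affine volume form} $\theta$, the $n$-form on $H$ defined by $\theta(X_1,\dots,X_n)=\det_{\re^{n+1}}(X_1,\dots,X_n,\xi)$. Writing $\Omega$ for the standard parallel volume form on $\re^{n+1}$, so that $D\Omega=0$, I would compute $\na\theta$ directly. Differentiating $\theta(X_1,\dots,X_n)=\Omega(X_1,\dots,X_n,\xi)$ along a vector field $X$ and inserting the structure equations $D_XX_k=\na_XX_k+h(X,X_k)\xi$ and $D_X\xi=-S(X)$, two kinds of terms appear. The terms proportional to $h(X,X_k)\xi$ vanish because $\xi$ already occupies the last slot of $\Omega$, and the single term $\Omega(X_1,\dots,X_n,-S(X))$ vanishes because $S(X)$ is tangent to $H$, hence a linear combination of $X_1,\dots,X_n$. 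What survives is precisely the defining identity for the covariant derivative of an $n$-form, so $\na\theta=0$.

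Next I would invoke the third defining property of the affine normal, $\det_{\re^{n+1}}(X_1,\dots,X_n,\xi)^2=\det_{1\le i,j\le n}h(X_i,X_j)$, which says exactly that $\theta^2$ equals the square of the metric volume form $\nu_h$ of $h$; hence $\theta=\pm\nu_h$ locally and therefore $\na\nu_h=0$ as well. On the other side, $\hat\na$ is the Levi-Civita connection of $h$, so $\hat\na\nu_h=0$ by metric compatibility. Finally I would use the standard formula for how the difference of two torsion-free connections acts on a top-degree form: in a local frame one finds $(\hat\na_X-\na_X)\nu_h=-(C^i_{ij}X^j)\,\nu_h$, the coefficient being exactly the contraction of the upper index of $C$ against a lower one. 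Since both connections annihilate $\nu_h$ and $\nu_h$ is nonvanishing, the trace $C^i_{ij}$ must be zero.

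The computation is essentially routine once this framework is set up; the only step demanding care is the verification that $\na\theta=0$, where one must track both the $h(X,X_k)\xi$ contributions and the shape-operator term and confirm that each drops out for the reasons above. The real content is the recognition that the determinant normalization of $\xi$ forces $\na$ to preserve the Blaschke volume form, which is the geometric meaning of the tracelessness of the Pick form.
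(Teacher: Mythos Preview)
Your argument is correct and is essentially the same idea as the paper's: both hinge on the fact that the determinant normalization of $\xi$ forces the affine volume form $\theta(X_1,\dots,X_n)=\det(X_1,\dots,X_n,\xi)$ to coincide with the metric volume $\nu_h$, so that both $\na$ and $\hat\na$ annihilate it and hence the difference tensor must be trace-free. The paper simply runs this as a one-line coordinate computation (asserting $0=\hat\na_j\det(f_{,1},\dots,f_{,n},\xi)$ from the third defining property of $\xi$ and then expanding), whereas you have unpacked the same content into the invariant steps $\na\theta=0$, $\theta=\pm\nu_h$, $\hat\na\nu_h=0$.
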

\begin{proof}
This follows from the structure equations and the definition of $\xi$.  If we consider $f$ to be a coordinate immersion, let $\Gamma^k_{ij}$ denote the Christoffel symbols of $h$, and use a comma to denote covariant derivatives with respect to $h$, we find $$f_{,ij} = \frac{\partial^2 f}{\partial x^i \partial x^j} - \Gamma^k_{ij} f_{,k} = -C^k_{ij} f_{,k} + h_{ij} \xi.$$
Now compute using the definition of $\xi$:
\begin{eqnarray*}
0&=& \hat\na_j \det(f_{,1},\dots,f_{,n},\xi) \\
&=& \det (f_{,1j},\dots, f_{,n},\xi) + \cdots +\det(f_{,1},\dots,f_{,nj},\xi) + \det (f_{,1},\dots,f_{,n},\xi_{,j}) \\
&=& \det (-C^i_{1j}f_{,i},\dots, f_{,n},\xi) + \cdots + \det (f_{,1},\dots ,-C^i_{nj} f_{,i}, \xi) + 0 \\
&=& -C^i_{ij} \det(f_{,1},\dots,f_{,n},\xi).
\end{eqnarray*}
\end{proof}

\begin{lem}
$C^\ell_{ij}h_{\ell k} = C_{ijk}$ is totally symmetric.
\end{lem}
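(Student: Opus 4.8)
The plan is to reduce total symmetry to a single extra transposition and then read it off from a third covariant derivative of the position vector. Lowering the top index with $h$ gives $C_{ijk}=C^\ell_{ij}h_{\ell k}$, which is already symmetric in its first two indices $i,j$, since $C^\ell_{ij}$ is symmetric in its bottom two indices and lowering $\ell$ into the third slot does not disturb this. Because the transpositions $(ij)$ and $(jk)$ generate the full symmetric group on three letters, it suffices to establish the one further relation $C_{ijk}=C_{ikj}$.

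To obtain this I would differentiate the formula $f_{,ij}=-C^k_{ij}f_{,k}+h_{ij}\xi$ from the previous lemma once more, applying $\hat\na_k$, where $\hat\na$ acts by the Levi-Civita connection of $h$ on the surface indices and by the flat connection $D$ on the $\re^{n+1}$ values. Using $\hat\na h=0$, the same formula again for $f_{,\ell k}$, and $\hat\na_k\xi=D_{\partial_k}\xi=-S(\partial_k)$ (which is tangential), every term except one is tangential, and the expansion takes the form
\begin{equation*}
f_{,ijk}=(\text{tangential terms})-C^\ell_{ij}h_{\ell k}\,\xi=(\text{tangential terms})-C_{ijk}\,\xi .
\end{equation*}
Thus the $\xi$-component of $f_{,ijk}$ is exactly $-C_{ijk}$.

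The key step is then to note that the part of $f_{,ijk}$ antisymmetric in $j,k$ is purely tangential. Since $f$ takes values in a trivial bundle carrying the flat connection $D$, commuting the last two covariant derivatives produces only the Riemann curvature of $h$ acting on the remaining cotangent index $i$,
\begin{equation*}
f_{,ijk}-f_{,ikj}=-R^{m}{}_{ijk}\,f_{,m},
\end{equation*}
which is a combination of the tangent vectors $f_{,m}$ and has no $\xi$-component. Hence the $\xi$-component of $f_{,ijk}$ is symmetric in $j,k$, forcing $C_{ijk}=C_{ikj}$; combined with the symmetry in $i,j$ this yields total symmetry.

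I expect the main difficulty to be bookkeeping rather than conceptual: one must check that the commutator of covariant derivatives kills the $\xi$-direction, which rests on the flatness of the ambient connection $D$ together with the use of the torsion-free, metric connection $\hat\na$ (so that $\hat\na h=0$) for the surface indices. An essentially equivalent alternative would be to compute $\na h$ directly, find $(\na_k h)_{ij}=C_{kij}+C_{kji}$, and then invoke the equiaffine Codazzi equation (again a consequence of the flatness of $D$) that $\na h$ is totally symmetric; with the known symmetry of $C$ in its first two indices this once more forces full symmetry.
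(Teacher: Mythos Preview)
Your proof is correct and follows essentially the same approach as the paper: compute the third covariant derivative $f_{,ijk}$, identify its $\xi$-component as $-C_{ijk}$, and use the Ricci identity (with flatness of $D$) to see that $f_{,ijk}-f_{,ikj}$ is purely tangential. You supply a bit more justification (why the single transposition $(jk)$ suffices, why only the $h$-curvature appears in the commutator) than the paper does, but the argument is the same.
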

\begin{proof}
We have already seen that $C_{ijk}=C_{jik}$.  To show the remaining symmetry,
take the third covariant derivative of $f$:
\begin{eqnarray*}
f_{,ijk} &=& -C^\ell_{ij,k}f_{,\ell} - C^\ell_{ij}f_{,\ell k} + h_{ij}\xi_{,k} \\
&=& -C^\ell_{ij,k}f_{,\ell} + C^\ell_{ij} C_{\ell k}^m f_{,m} - C^\ell_{ij}h_{\ell k} \xi - h_{ij} A^\ell_k f_{,\ell}.
\end{eqnarray*}
The difference $f_{,ijk}-f_{ikj}$ involves curvature terms of the form $R^\ell_{jki}f_{,\ell}$, which lie entirely in the tangent space to $f(H)$.  Thus the part of $f_{,ijk}-f_{ikj}$ in the span of $\xi$ is 0.
\end{proof}

The other main fact about the cubic tensor is the following result of Maschke, Pick, and Berwald:
\begin{thm} \label{cubic-form-vanish}
Let $H$ be a smooth hypersurface in $\re^{n+1}$ whose second fundamental form is nondegenerate.  Then $H$ is a subset of a hyperquadric if and only if $C_{ij}^k =0$ identically.
\end{thm}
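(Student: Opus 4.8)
The plan is to rephrase the condition $C^k_{ij}=0$ as $\na=\hat\na$, so that the structure equations collapse to $f_{,ij}=h_{ij}\xi$ and $\xi_{,j}=-A^\ell_j f_{,\ell}$, and then to integrate this linear system. The ``only if'' direction is the easy one: a nondegenerate hyperquadric is an affine sphere, so after an affine change of coordinates one may put it in standard form, with radial affine normal $\xi=-c(f-f_0)$ for a central quadric and constant affine normal for a paraboloid; in either case a direct computation of the covariant Hessian of $f$ shows that $f_{,ij}$ has no tangential component, i.e. $C^k_{ij}=0$. (Here one uses that $C$ is an equiaffine invariant, so it suffices to check one representative of each affine type.) The substance is the converse, so I would assume $C^k_{ij}=0$ throughout. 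I also assume $n\ge 2$, since for $n=1$ the apolarity condition $C^i_{ij}=0$ already forces $C\equiv 0$ and the statement is vacuous there.

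First I would show that the affine shape operator is a constant multiple of the identity, $S=c\,\mathrm{Id}$. Repeating the third-derivative computation of the preceding lemma but now with $C=0$, the identity $f_{,ijk}-f_{,ikj}=-\hat R^\ell_{ijk}f_{,\ell}$ yields the affine Gauss equation $\hat R^\ell_{ijk}=h_{ij}A^\ell_k-h_{ik}A^\ell_j$, where $A_{ij}=A^\ell_i h_{\ell j}$ is symmetric. Lowering the upper index and imposing the antisymmetry of the Levi-Civita curvature in its first pair gives the algebraic constraint $h_{ij}A_{mk}-h_{ik}A_{mj}+h_{mj}A_{ik}-h_{mk}A_{ij}=0$; contracting with $h^{mj}$ collapses this, for $n\ge 2$, to $A_{ij}=\tfrac1n(\tr A)\,h_{ij}$, i.e. $S=c\,\mathrm{Id}$ with $c=\tfrac1n\tr A$ a function. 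The affine Codazzi equation $(\hat\na_X S)(Y)=(\hat\na_Y S)(X)$ then reads $(Xc)Y=(Yc)X$, which for $n\ge 2$ forces $dc=0$, so $c$ is a constant.

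Next I would integrate, in two cases. If $c\neq 0$, translate so that $\xi=-cf$ (a proper affine sphere centred at the origin) and introduce the affine conormal $\nu\!:H\to(\re^{n+1})^*$ defined by $\nu\cdot\xi=1$, $\nu\cdot f_{,i}=0$. Differentiating these and using $f_{,ij}=-c\,h_{ij}f$ shows first that $\nu\cdot f=-1/c$ is constant, and then, after a second differentiation, that $\nu_{,ij}=-c\,h_{ij}\nu$ — exactly the linear system satisfied by $f$ itself. The scalar solutions of $u_{,ij}=-c\,h_{ij}u$ on the simply connected $H$ form a space of dimension at most $n+1$, and the $n+1$ components of $f$ are independent solutions (since $f,f_{,1},\dots,f_{,n}$ span $\re^{n+1}$), hence a basis; so each component of $\nu$ is a constant linear combination of those of $f$, that is $\nu=Bf$ for a constant bilinear form $B$. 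Evaluating the relations on the frame shows $B$ symmetric and nondegenerate, and $B(f,f)=\nu\cdot f=-1/c$, so $H$ lies on the central quadric $\{y:B(y,y)=-1/c\}$. If instead $c=0$, then $\xi_{,j}=0$, so $\xi$ is a constant vector and, by the Gauss equation above, $h$ is flat; taking the $n$ components of $f$ transverse to $\xi$ (which satisfy $f^a_{,ij}=0$) as flat affine coordinates realizes $f$ as a graph whose Hessian is the constant matrix $(h_{ij})$, that is, a paraboloid.

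I expect the main obstacle to be the first step — extracting $S=c\,\mathrm{Id}$ from the symmetries of the Gauss equation and upgrading $c$ to a constant via Codazzi — together with the clean packaging of the integration as ``$f$ and its conormal solve the same linear system.'' The bookkeeping to watch is the interchange of covariant derivatives (the curvature corrections appearing in $\nu_{,ij}$ and in the third derivatives of $f$) and the hypothesis $n\ge 2$, without which both the contraction in the first step and the Codazzi argument degenerate.
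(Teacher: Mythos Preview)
The paper does not prove this theorem: it simply states it as the classical result of Maschke, Pick, and Berwald, with no argument supplied. So there is no ``paper's own proof'' to compare your proposal against.

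That said, your outline is the standard proof and is essentially correct. A few remarks on the points you flagged as potential trouble:
\begin{itemize}
\item Your extraction of $S=c\,\mathrm{Id}$ from the Gauss equation is right; the contraction you describe does collapse to $A_{ik}=\tfrac1n(\tr A)h_{ik}$, and the Codazzi argument for constancy of $c$ is valid for $n\ge 2$ exactly as you say.
\item In the conormal step, the solution space of $u_{,ij}=-c\,h_{ij}u$ has dimension \emph{exactly} $n+1$ (the system is of finite type: $u$ and $\nabla u$ at a point determine everything), and your argument that the components of $f$ are independent is fine. The symmetry of $B$ needs one more differentiation than you wrote: from $f^T B^T f_{,i}=0$ and $f^T B f_{,i}=0$ you should differentiate once more to get $f_{,j}^T B^T f_{,i}=-h_{ij}=f_{,j}^T B f_{,i}$, and then use that $\{f,f_{,1},\dots,f_{,n}\}$ is a frame.
\item Your remark on $n=1$ is well taken: apolarity forces $C\equiv 0$ for curves, so the theorem is vacuous (indeed false as an ``if'' statement) there; the classical statement is for $n\ge 2$.
\end{itemize}
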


\subsubsection{Affine spheres}

Now that the basic affine differential invariants have been introduced, we can define the affine spheres.  Recall that one of the first invariants in the Euclidean geometry of hypersurfaces is the unit normal vector.  A Euclidean sphere can then be defined as a hypersurface all of whose normal lines pass through a single point, the center.  An \emph{affine sphere}\index{affine sphere} can be defined in the same way:  A \emph
{proper affine sphere}\index{proper affine sphere}\index{affine sphere!proper} is a hypersurface in $\re^{n+1}$ all of whose affine normal lines pass through a single point in $\re^{n+1}$, called the center.  If the affine sphere is locally convex, the proper affine spheres split into two types: \emph{hyperbolic affine spheres}\index{hyperbolic affine sphere}\index{affine sphere!hyperbolic}, all of whose affine normals point away from the center, and \emph{elliptic affine spheres}\index{elliptic affine sphere}\index{affine sphere!elliptic}, all of whose affine normals point toward the center.  We also define an \emph{improper affine sphere}\index{improper affine sphere}\index{affine sphere!improper} as a hypersurface in $\re^{n+1}$ all of whose affine normal lines are parallel.  An improper affine sphere which is locally convex is called a \emph{parabolic affine sphere}\index{parabolic affine sphere}\index{affine sphere!parabolic}.

It is not easy to write down affine spheres except in highly symmetric examples.  As mentioned above, the easiest examples are hyperquadrics in $\re^{n+1}$.  In particular, ellipsoids are elliptic affine spheres: a round sphere in $\re^{n+1}$ is clearly an elliptic affine sphere by symmetry, and ellipsoids are affine images of round spheres.  Elliptic paraboloids are parabolic affine spheres, and a sheet of an elliptic hyperboloid is a hyperbolic affine sphere.

There is another prominent homogeneous example of hyperbolic affine spheres which goes back to \c{T}i\c{t}eica in dimension 2 \cite{tzitzeica08}, and is due to Calabi in higher dimensions \cite{calabi72}. In the first orthant $\{x^i>0\}\subset \re^{n+1}$, the level set $L=\{x^i>0, \Pi_i x^i = 1\}$ is a hyperbolic affine sphere.

The structure equations for affine spheres become
\begin{eqnarray*}
D_XY &=& \na_XY + h(X,Y)\xi, \\
D_X\xi &=& -\lambda \,X,
\end{eqnarray*}
for $\lambda$ a real constant.  An affine sphere is improper if and only if $\lambda=0$.  Elliptic affine spheres have $\lambda>0$, while hyperbolic affine spheres have $\lambda<0$.  By scaling, we assume $\lambda \in \{-1,0,1\}$. It is also useful to translate so that proper affine spheres have their centers at the origin. In this case $\xi = -\lambda f$ for $f$ the immersion.  For improper affine spheres, we may apply a linear map to ensure that $\xi$ is the last coordinate vector.

\subsubsection{Monge-Amp\`ere equations}

Affine spheres can be parametrized by solutions to  Monge-Amp\`ere equations\index{Monge-Amp\`ere equation}.  In particular, we find an equation for a proper affine sphere with center at the origin and $\lambda=\pm1$.  Let $\Omega\subset\re^n$ be a domain, where $\re^n$ is considered as an affine subspace $\re^n\times\{1\}\subset\re^{n+1}$, and let $u\!:\Omega\to\re$ be a function, and consider the radial graph of $\frac\lambda u$
$$ \left\{ f(t)= \frac\lambda {u(t)}\, (t,1) : t\in\Omega\right\}.$$
Compute using $\xi = -\lambda f$ that
\begin{eqnarray*}
\frac{\partial f}{\partial t^i} &=& -\frac{u_i}u\,f + \frac\lambda u (e_i,0), \\
\frac{\partial^2f}{\partial t^i \partial t^j} &=& -\frac{u_{ij}}u\,f -\frac{u_i}u \,\frac{\partial f}{\partial t^j} - \frac{u_j}u \, \frac{\partial f}{\partial t^i}, \\
h_{ij}&=& \lambda\,\frac{u_{ij}}u,\\
\det\left(\frac{\partial f}{\partial t^1},\dots,\frac{\partial f}{\partial t^n},\xi\right) &=&- \frac{\lambda^n}{u^{n+1}}.
\end{eqnarray*}
So the equation for $f$ to be an immersion of a proper affine sphere is $$\det u_{ij} = \left(\frac \lambda u \right)^{n+2}.$$

The story for parabolic affine spheres is similar.  We set $\xi = e_{n+1}$, the last coordinate vector, and consider the ordinary Cartesian graph $f(t)=(t,u(t))$ for $t\in\Omega\subset\re^n$.  Then it is straightforward to check that $f$ is an immersion of a parabolic affine sphere if and only if $\det u_{ij} = 1$.

\subsubsection{Two-dimensional affine spheres} \label{two-dim}

For convex affine spheres, the affine metric is positive-definite, and thus it provides a conformal structure in dimension two.  In this case, the cubic tensor can be identified with a holomorphic cubic differential \cite{wang91,simon-wang93}.  We now derive the structure equations for these affine spheres.

Choose $z=x+iy$ a local conformal coordinate with respect to $h$, so that $h=2e^{2\psi}|dz|^2$.
(This convention for the metric is not the same as that in \cite{loftin01,loftin02c,loftin06} -- where $h=e^\psi|dz|^2$ is used instead; the present convention is more convenient for the Toda theory.) Parametrize the affine sphere by $f\!: \mathcal D \to \re^3$, where $\mathcal D$ is a simply connected domain in $\co$. Since $\{\frac1{\sqrt2}e^{-\psi}f_x, \frac1{\sqrt2}e^{-\psi}f_y\}$ is an orthonormal basis of the tangent space, the affine normal satisfies
$$\det (e^{- \psi}f_x, e^{-\psi}f_y, \xi) = 2,$$
which implies
$$ \det (f_z,f_{\bar z},\xi) =  i e^{2\psi}.$$
The affine structure equations become
\begin{eqnarray*}
D_XY &=& \na                                                                                             _XY + h(X,Y)\xi, \\
D_X\xi &=& -\lambda\, X.
\end{eqnarray*}
Consider the coordinate frame $\{e_1 =f_z = f_*(\frac\partial{\partial z}), e_{\bar 1} = f_{\bar z} = f_*(\frac\partial{\partial \bar z})\}$. Then
$$ h(f_z,f_z)= h(f_{\bar z}, f_{\bar z}) = 0,\quad h(f_z,f_{\bar z}) =  e^{2\psi}.$$
Let  $\theta$ be the matrix of connection one-forms for $\na$
$$ \na e_i = \theta^j_ie_j, \quad i,j\in\{1,\bar 1\}.$$
If $\hat\theta$ is the matrix of connection one-forms of the Levi-Civita connection, then $$ \hat\theta^1_{\bar 1} = \hat \theta^{\bar 1}_1 = 0, \quad \hat\theta^1_1 = \partial \psi, \quad \hat \theta^{\bar 1}_{\bar 1} = \bar\partial \psi.$$

The difference $\hat\na - \na$ is the cubic tensor.
$$ \hat\theta^j_\ell - \theta^j_\ell = C^j_{\ell k} \rho^k,$$
where $\{\rho^1 = dz, \rho^{\bar 1} = d\bar z\}$ is the dual frame of one-forms.
The apolarity condition is then
$$ C^1_{1k} + C^{\bar 1}_{\bar 1 k} = 0, \quad k \in \{1,\bar 1\},$$
which, upon lowering the indices, implies
$$ C_{\bar 1 1 k} + C_{1\bar 1 k} = 0.$$
Since the cubic tensor is totally symmetric, this implies all components of $C$ must vanish except $C_{111}$ and $C_{\bar 1\bar 1\bar 1}=\overline{C_{111}}$. This discussion completely determines $\theta$:
$$\left(\begin{array}{cc}\theta^1_1 & \theta^1_{\bar 1} \\[1mm] \theta^{\bar 1}_1 & \theta^{\bar 1}_{\bar 1} \end{array}\right) =
\left(\begin{array}{cc} 2\partial \psi & \bar Q e^{-2\psi}d\bar z \\ Q e^{-2\psi} dz & 2\bar\partial \psi \end{array} \right)$$
for $Q= C^{\bar 1}_{11} e^{2\psi}$ (this convention for the cubic differential is half the corresponding quantity $U$ in \cite{loftin01,loftin02c,loftin06}).

Since $D$ is the standard connection on $\re^3$, we have for example $D_{f_z}f_z = D_{\frac\partial{\partial z}} f_z = f_{zz}.$  The structure equations then become
\begin{eqnarray*}
f_{zz} &=& 2\psi_z f_z + Q e^{-2\psi} f_{\bar z}, \\
f_{\bar z \bar z} &=& \bar Q e^{-2\psi} f_z + 2\psi_{\bar z} f_{\bar z}, \\
f_{z\bar z} &=&  e^{2\psi} \xi.
\end{eqnarray*}
Then, together with the first-order equations $\xi_z = -\lambda f_z$, $\xi_{\bar z} = -\lambda f_{\bar z}$, we get a first-order linear system in  $f_z$, $f_{\bar z}$ and $\xi$:
\begin{eqnarray*}
\frac\partial{\partial z}
\left(\begin{array}{c}  f_z \\ f_{\bar z} \\ \xi \end{array} \right)
&=&
\left(\begin{array}{ccc}  2\psi_z & Qe^{-2\psi} &0 \\
 0 & 0 & e^{2\psi}  \\ -\lambda & 0 & 0 \end{array}\right)
\left(\begin{array}{c}  f_z \\ f_{\bar z} \\ \xi \end{array} \right) , \\
\frac\partial{\partial \bar z}
\left(\begin{array}{c}  f_z \\ f_{\bar z} \\ \xi \end{array} \right)
&=&
\left(\begin{array}{ccc}  0 & 0 & e^{2\psi}\\
 \bar Q e^{-2\psi} & 2\psi_{\bar z} & 0 \\ 0 & -\lambda & 0 \end{array}\right)
\left(\begin{array}{c}  f_z \\ f_{\bar z} \\ \xi \end{array} \right) .
\end{eqnarray*}

Given initial conditions for the frame $\{f_z,f_{\bar z},\xi\}$ at $z_0\in\mathcal D$, the linear system has a unique solution as long as the mixed partials commute  (this can be traced back to the Frobenius Theorem).  In other words, we require $(f_{zz})_{\bar z} = (f_{z\bar z})_z$ and $(f_{\bar z \bar z})_z = (f_{z\bar z})_{\bar z}.$  This becomes
\begin{eqnarray}
\label{aff-sph-eq-local}
0&=& 2\psi_{z\bar z} + |Q|^2 e^{-4\psi} +  \lambda e^{2\psi},\\
\nonumber
0&=& Q_{\bar z}.
\end{eqnarray}
It is also easy to check that $Q$ transforms as a cubic differential under holomorphic coordinate changes. Altogether, we have shown:
\begin{thm} \label{aff-sph-integrate}
Fix $\lambda\in\{-1,0,1\}$.
Let $\mathcal D\subset \co$ be a simply-connected domain, $Q$ is a holomorphic cubic differential on $\mathcal D$, $\psi\!:\mathcal D\to \re$ satisfies (\ref{aff-sph-eq-local}) $z_0\in\mathcal D$, and $\xi_0,f_0\in\re^3$, $a\in\co^3$ so that $\det(a,\bar a, \xi_0) = ie^{2\psi(z_0)}$. Then there is a unique immersion of an affine sphere $f\!:\mathcal D \to \re^3$ so that $$ f(z_0)=f_0, \quad \xi(z_0)=\xi_0, \quad
f_z(z_0) = a, \quad f_{\bar z}(z_0) = \bar a,$$ the pullback under $f$ of the Blaschke metric and cubic form are $e^{2\psi}|dz|^2$ and $Q\,dz^3$ respectively.  The affine sphere is hyperbolic, parabolic, or elliptic if $\lambda=-1,0,1$ respectively.
\end{thm}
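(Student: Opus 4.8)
The plan is to read the final statement as an integration theorem and to exploit that all the analytic content has already been extracted. The structure equations have been recast as the overdetermined first-order linear system $\partial_z F = U F$, $\partial_{\bar z} F = V F$ for the frame $F = (f_z, f_{\bar z}, \xi)^{\mathsf T}$, with $U$ and $V$ the explicit coefficient matrices displayed above, and the compatibility of this system has been shown to be precisely (\ref{aff-sph-eq-local}). Since $\mathcal{D}$ is simply connected and the cross-derivative (zero-curvature) condition $\partial_{\bar z} U - \partial_z V + [U,V] = 0$ holds by the hypotheses on $\psi$ and $Q$, the Frobenius theorem furnishes a unique smooth $\co^3$-valued frame $F$ on $\mathcal{D}$ with the prescribed initial value $F(z_0) = (a, \bar a, \xi_0)^{\mathsf T}$. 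This is the only place Frobenius enters, and it produces the frame essentially for free; the substantive work lies in converting this abstract frame into a genuine affine sphere in $\re^3$.

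The real heart of the argument is propagating three structural conditions from $z_0$ to all of $\mathcal{D}$. First is reality: I would observe that if $P$ is the permutation swapping the first two entries, then $P\bar V P = U$ (and symmetrically $P \bar U P = V$), so that $G = P\bar F = (\overline{f_{\bar z}}, \overline{f_z}, \bar\xi)^{\mathsf T}$ solves the very same system. Because $\xi_0, f_0 \in \re^3$ and $f_{\bar z}(z_0) = \bar a$, we have $G(z_0) = F(z_0)$, so uniqueness forces $f_{\bar z} = \overline{f_z}$ and $\xi \in \re^3$ everywhere. Second, since $\partial_{\bar z} f_z = \partial_z f_{\bar z}$ is built into the system, the $\re^3$-valued one-form $f_z\,dz + f_{\bar z}\,d\bar z$ is closed, so simple connectivity yields a unique $f\!:\mathcal{D}\to\re^3$ with $f(z_0) = f_0$ whose derivatives are the first two entries of $F$.

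The third condition is the main obstacle, since it is what distinguishes a mere integral of the linear system from a true affine sphere with $\xi$ the genuine affine normal: I must show the normalization $\det(f_z, f_{\bar z}, \xi) = i e^{2\psi}$ persists. Differentiating this determinant and substituting the structure equations, every off-diagonal contribution ($Qe^{-2\psi}f_{\bar z}$ in $f_{zz}$, the term $e^{2\psi}\xi$ in $f_{z\bar z}$, and $\xi_z = -\lambda f_z$) produces a determinant with a repeated column and hence vanishes, leaving $\partial_z \det(f_z, f_{\bar z}, \xi) = 2\psi_z \det(f_z, f_{\bar z}, \xi)$, and symmetrically for $\partial_{\bar z}$. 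Thus $e^{-2\psi}\det(f_z, f_{\bar z}, \xi)$ is annihilated by both $\partial_z$ and $\partial_{\bar z}$, so it equals the constant $i$ fixed by the initial data. In particular this determinant is nowhere zero, so $f_z, f_{\bar z}$ are independent and $f$ is an immersion; moreover this is exactly the normalization characterizing the affine normal, so the recovered $\na$ and $h$ are the Blaschke connection and Blaschke metric, $h$ and the cubic tensor pull back to the prescribed $e^{2\psi}|dz|^2$ and $Q\,dz^3$. Finally, $\xi_z = -\lambda f_z$ and $\xi_{\bar z} = -\lambda f_{\bar z}$ integrate to $\xi = -\lambda f + c$ for a constant $c\in\re^3$; for $\lambda = \pm 1$ every affine normal line then passes through the common center $c/\lambda$, while for $\lambda = 0$ they are all parallel, yielding the hyperbolic, parabolic, and elliptic trichotomy as stated.
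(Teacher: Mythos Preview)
Your proof is correct and follows the same approach as the paper: integrate the first-order linear system for the frame $(f_z,f_{\bar z},\xi)$ via Frobenius, using that the compatibility condition is precisely (\ref{aff-sph-eq-local}) together with $Q_{\bar z}=0$. The paper's argument is in fact entirely contained in the discussion preceding the theorem statement and stops at ``the linear system has a unique solution as long as the mixed partials commute''; you supply the verifications the paper leaves implicit---reality of the frame via the symmetry $P\bar VP=U$, recovery of $f$ from closedness of $f_z\,dz+f_{\bar z}\,d\bar z$, and persistence of the determinant normalization $\det(f_z,f_{\bar z},\xi)=ie^{2\psi}$---so your write-up is more complete than the paper's on these points, but there is no substantive methodological difference.
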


We typically assume for parabolic affine spheres ($\lambda=0$) that $\xi_0=e_3$, which implies $\xi=e_3$.  For hyperbolic ($\lambda=-1$) and elliptic ($\lambda=1$) affine spheres, we assume $f_0=-\lambda \xi_0$, which implies $f=-\lambda \xi$.

\subsubsection{Dual affine spheres}

For each of these definite affine spheres, there is a dual affine sphere of the same type which is related to the Legendre transform.  Given a smooth convex function $s=s(x^1,\dots, x^n)$,  the \emph{Legendre transform}\index{Legendre transform} $s^*$ of $s$ is given by the formula
$$s^* + s = x^i \,\frac{\partial s}{\partial x^i}.$$
With respect to the variables $y_i = \frac{\partial s}{\partial x^i}$, $s^*$ is a convex function. The duality extends to the Monge-Amp\`ere equation
$$\det \frac{\partial^2s}{\partial x^i \partial x^j} =1 \qquad \Longleftrightarrow \qquad \det\frac{\partial^2s^*}{\partial y_i \partial y_j}=1.$$
Therefore, the Legendre transform takes a parabolic affine sphere given by the graph $\{(x,s(x)\}$ where $x\in \re^n$ to a parabolic affine sphere in the dual space $\{y,s^*(y)\}$, where $y\in\re_n$ and $\re_n$ is the dual vector space to $\re^n$.

For proper affine spheres, the duality is through the \emph{conormal map}\index{conormal map}.  For a hypersurface $L\subset \re^{n+1}$ transverse to the position vector, the conormal map $N\!:L\to \re_{n+1}$ is given by $N(x) = \ell$, where $\ell\!: x\mapsto 1$ and $\ell\!:T_xL\to 0$. The conormal map is naturally related to the Legendre transform (see e.g. \cite{loftin10}), and hyperbolic and elliptic affine spheres are taken by the conormal map to affine spheres of the same type.

The duality in each of these cases is an isometry of the Blaschke metric, and takes the cubic form $C\mapsto -C$.

\subsection{Harmonic maps and minimal surfaces.}\index{harmonic map}\index{minimal surface}
There is an intimate link between harmonic maps of surfaces and holomorphic differentials, which has its origin
in the notion of the \emph
{Hopf differential}\index{Hopf differential} of a surface in Euclidean $\R^3$. The Hopf differential is a
complex quadratic differential (of type $(2,0)$) on the surface built from its second fundamental form, and it is
holomorphic precisely when the surface has constant mean curvature
(equally, by the Ruh-Vilms theorem, when its Gauss map is a harmonic map). In that case the Gauss map induces a
closely related holomorphic quadratic differential, a scalar multiple of the Hopf differential, which vanishes
precisely when the mean curvature is zero (i.e., when the surface is minimal) or when the surface is totally umbilic.
This is the simplest example of a more general theory for harmonic maps of surfaces which explains the
appearence of quadratic, cubic, and higher order, holomorphic differentials.

To summarize this theory, let $(\Sigma,\gamma)$ be a closed Riemannian surface and $(N,g)$ a Riemannian manifold
of dimension $n$. The harmonic map equations are the condition that a $C^2$ map $f:\Sigma\to N$ is a
critical point of the \emph{Dirichlet energy}\index{Dirichlet energy}
\begin{equation}\label{eq:Dirichlet}
E(f) = \tfrac12\int_\Sigma\|df\|^2\vol_\gamma,\quad \|df\|^2 = \tr_\gamma f^*g.
\end{equation}
Let us restrict our attention to the case of immersions.
This functional is closely related to the area $A(f)$ of $\Sigma$ in the induced metric $f^*g$: when $f$ is
an isometric immersion, $\gamma=f^*g$ and $E(f)=A(f)$. Indeed, in this case $A(f)$ and $E(f)$ satisfy the
same Euler-Lagrange equations (although this does not follow simply from $E(f)=A(f)$, since the variations for
the former must preserve the metric $\gamma$, while those for the latter do not). The cleanest way to think of
the Euler-Lagrange equations is via the second fundamental form of the map $f$, which is the symmetric tensor
\[
\nabla df(X,Y) = \nabla^g_Xf_*Y - f_*(\nabla^\gamma_XY),\quad X,Y\in\Gamma(TM).
\]
This takes values in $f^{-1}TN$.  It generalizes the usual notion of the second fundamental form in submanifold
theory, with which it agrees when $\gamma=f^*g$. Taking its trace with respect to
$\gamma$ gives the
\emph{tension field}\index{tension field} $\tau(f) = \tr_\gamma\nabla df$: this agrees with the mean curvature of $f$ when $\gamma$ is
the induced metric. Given a smooth compactly supported vector field $V\in \Gamma(f^{-1}TN)$, the first variation is
$$\delta E_V(f) = -\int_\Sigma g(V,\tau(f))\,{\rm vol}_\gamma.$$
The map $f$ is harmonic when $\tau(f)=0$. This is a (typically nonlinear)
generalization of the Laplace equation (with which it agrees when $f:\Sigma\to\R$): it is second order, quasi-linear
and elliptic.

When $\Sigma$ is a surface, it is easy to show that
$E(f)$ is invariant under conformal changes of $\gamma$, so the Euler-Lagrange equations depend only on the
complex structure $\Sigma$ obtains from its metric $\gamma$. Further, on a surface every metric is locally
conformally flat, i.e., there exist about every point local coordinates $(x,y)$ for which $\gamma =
\sigma(dx^2+dy^2)$ for some positive function $\sigma$. Thus $z=x+iy$ is a local complex coordinate on
$\Sigma$. The next result shows that the harmonic map equations for $f:\Sigma\to N$ are a form of Cauchy-Riemann
equation for $df(\partial/\partial z)$.
\begin{lem}
Let $f:(\Sigma,\gamma)\to (N,g)$ be a $C^2$ map from a surface, and let $z$ be a local complex coordinate
with $\gamma = \sigma|dz|^2$. Then
\begin{equation}
\tau(f) = 4\sigma^{-1}\nabla^g_{\bar Z}f_*Z,
\end{equation}
where $Z = \partial/\partial z$. In particular, $f$ is harmonic if and only if $\nabla^g_{\bar Z}f_*Z=0$.
\end{lem}

\subsubsection{Quadratic differentials.}\index{quadratic differential}
The eigenspaces of the complex structure on $\Sigma$ give the type decomposition of $T\Sigma^\C$ into
$T^{1,0}\Sigma\oplus T^{0,1}\Sigma$. When we extend $f^*g$ complex-linearly to $T\Sigma^\C$ it has a type
decomposition:
\[
f^*g = (f^*g)^{2,0} + (f^*g)^{1,1} + (f^*g)^{0,2}.
\]
The map $f$ will be \emph{weakly conformal}\index{weakly conformal} (i.e., $f^*g =s\gamma$ for some non-negative function $s$)
precisely when $f^*g$ has type $(1,1)$, i.e, when $(f^*g)^{2,0}=0$. By the previous lemma and the remarks
above it, $f(\Sigma)$ will be a minimal surface
(i.e., will have vanishing mean curvature) whenever it is a conformal harmonic map. When $f$ is only weakly conformal,
we will say it is a \emph{branched minimal immersion}\index{branched minimal immersion}. We are now in a position to see the source of the quadratic
holomorphic differentials in harmonic surface theory.
\begin{cor}
Let $f:(\Sigma,\gamma)\to (N,g)$ be a harmonic immersion of a surface, then $f^*g^{2,0}$ is a holomorphic
quadratic differential on $\Sigma$. In particular, every harmonic $2$-sphere is a (possibly branched) minimal $2$-sphere.
\end{cor}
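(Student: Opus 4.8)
The plan is to deduce the holomorphicity of $(f^*g)^{2,0}$ from the preceding lemma by a short computation, and then to obtain the statement about the sphere from the nonexistence of holomorphic quadratic differentials on $S^2$.

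First I would fix a local complex coordinate $z$ with $\gamma = \sigma|dz|^2$ and set $Z = \partial/\partial z$. Extending $f^*g$ complex-bilinearly, its $(2,0)$-part is $\phi\,dz^2$ with coefficient $\phi = g(f_*Z, f_*Z)$, so holomorphicity of the quadratic differential amounts to $\partial\phi/\partial\bar z = 0$ in every chart. Since the pullback connection $\nabla^g$ is compatible with $g$, and this compatibility survives the complex-bilinear extension, I would compute
\[
\frac{\partial\phi}{\partial\bar z} = \bar Z\,g(f_*Z, f_*Z) = 2\,g(\nabla^g_{\bar Z} f_*Z,\, f_*Z).
\]
By the previous lemma, the harmonic map equation is exactly $\nabla^g_{\bar Z} f_*Z = 0$, so the right-hand side vanishes and $\phi$ is holomorphic in each chart. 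Because $f^*g$ is a genuine tensor, the local coefficients $\phi$ transform as the components of a section of $K^{\otimes 2}$ under holomorphic coordinate changes, so they assemble into a global holomorphic quadratic differential on $\Sigma$.

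For the second assertion I would specialize to $\Sigma = S^2 \cong \CP^1$, where a holomorphic quadratic differential is a holomorphic section of $K^{\otimes 2} = \caO(-4)$. A line bundle of negative degree on a compact Riemann surface admits only the zero holomorphic section, so $(f^*g)^{2,0}$ vanishes identically. By the remarks preceding the corollary, the condition $(f^*g)^{2,0} = 0$ is precisely that $f$ be weakly conformal, and a weakly conformal harmonic map is by definition a (possibly branched) minimal immersion; this gives the claim. I note that the computation above never used the immersion hypothesis, only that $f$ is harmonic, which is exactly what permits isolated branch points in the sphere case and accounts for the ``possibly branched'' caveat.

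There is no real obstacle in this argument: the holomorphicity is immediate once the harmonic map equation is read as $\nabla^g_{\bar Z} f_*Z = 0$, and the sphere case rests only on the standard vanishing $h^0(\CP^1, \caO(-4)) = 0$ (equivalently, a Riemann--Roch dimension count yielding no quadratic differentials in genus $0$). The only points deserving care are verifying that the complex-bilinear extension of $g$ remains parallel, so that the Leibniz step producing the factor of $2$ is legitimate, and confirming the tensorial transformation law that promotes the local $\phi\,dz^2$ to a well-defined global object; both are routine.
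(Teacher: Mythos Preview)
Your proof is correct and follows essentially the same approach as the paper: compute $\bar Z\, g(f_*Z,f_*Z) = 2g(\nabla^g_{\bar Z}f_*Z,f_*Z)$ and invoke the harmonic map equation, then use the nonexistence of holomorphic quadratic differentials on the sphere. The paper's version is terser, omitting the explicit identification of $K^{\otimes 2}$ with $\caO(-4)$ and the remarks on tensoriality, but the argument is the same.
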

The proof is short and makes perfectly
transparent how holomorphicity follows from the previous lemma.
\begin{proof}
In a local conformal coordinate $z$ on $\Sigma$ we note that, for $Z=\partial/\partial z$,
\[
(f^*g)^{2,0} = g(f_*Z,f_*Z)dz^2.
\]
 Now
\[
\bar Z g(f_*Z,f_*Z) = 2 g(\nabla^g_{\bar Z} f_*Z,f_*Z),
\]
which vanishes when $f$ is harmonic. Since there are no non-zero holomorphic differentials on the Riemann
sphere, every harmonic map of the sphere must be weakly conformal.
\end{proof}
For an immersed surface $\varphi:\Sigma\to \R^3$ with Gauss map $f:\Sigma\to (S^2,g)$ and second fundamental form $\II$,
this differential $(f^*g)^{2,0}$ is related to the Hopf differential $\II^{2,0}$ of $\varphi$ by
\[
(f^*g)^{2,0} = \tfrac{1}{\sqrt{2}}H\II^{2,0},
\]
where $H=\tfrac12\tr\II$ is the mean curvature. In particular, $H$ is constant when $f$ is harmonic and in that case
we may deduce (when $H\neq 0$) that the Hopf differential is also holomorphic.

\subsubsection{Cubic differentials.}
To describe the appearance of holomorphic cubic differentials we will restrict our attention to the case where
$(N,g,J)$ is a K\"ahler manifold with Hermitian metric $h(X,Y)=g(X,Y)-ig(JX,Y)$. In that case we have a type
decomposition for both $T\Sigma^\C$ and $TN^\C$. For $f:\Sigma\to N$ define
\[
\partial f:T\Sigma^\C\to T^{1,0}N;\quad \partial f(Z) = df(Z)^{1,0} = \tfrac12(df(Z)-iJdf(Z)).
\]
Since $TN\simeq T^{1,0}N$ we can view $h$ as a Hermitian inner product on this bundle, with corresponding
connection on $f^{-1}T^{1,0}N$. The harmonic map equations for $f$ can be written
$\nabla_{\bar Z}\partial f(Z)=0$.  Now define a quadratic and a cubic differential on $\Sigma$ by
\[
Q_2 = h(\partial f(Z),\partial f(\bar Z))dz^2,\quad Q_3 = h(\nabla_Z\partial f(Z),\partial f(\bar Z))dz^3,
\]
where $Z=\partial/\partial z$ for a local complex coordinate $z$ on $\Sigma$. Although this definition is local,
it does indeed extend globally.
Notice that $Q_2$ is, up to a factor of $2$, $f^*g^{2,0}$. So it vanishes when $f$ is a branched minimal
immersion.
\begin{thm}
When $f:\Sigma\to N$ is a branched minimal immersion into a K\" ahler manifold of constant holomorphic sectional
curvature, $Q_3$ is a holomorphic cubic differential.
\end{thm}
\begin{rem}
As stated this result is due to Wood \cite{wood84}:
$Q_2$ and $Q_3$ are the first two in a sequence of differentials $Q_k$ obtained by taking higher
covariant derivatives of $\partial f$, and $Q_k$ is holomorphic when $Q_j=0$ for all $j<k$.
This was known earlier
for maps into $\CP^n$ by Eells \& Wood \cite{eells-wood83} and Chern \& Wolfson \cite{chern-wolfson83}.
The idea has its origin in the work of Calabi \cite{calabi67} on the construction of holomorphic differentials
related to minimal surfaces in $S^n$, but in that case the differentials are all of even degree, and so no
cubic differentials arise. Burstall \cite{burstall95} gives an exposition of this which contrasts the case of
maps into spheres with maps into complex projective spaces.
\end{rem}

\subsection{Minimal Lagrangian surfaces in K\" ahler $4$-folds}

In the case where $(N,g,J)$ is a K\" ahler manifold with $n=4$, with K\" ahler  form $\omega = g(J\ ,\ )$, one can
look in particular at Lagrangian immersions $f:\Sigma\to N$, i.e., look at the condition that $f^*\omega=0$ (equally,
$JT\Sigma = T\Sigma^\perp$ in $f^{-1}TN$).
It is well-known that in this situation the cubic tensor
\[
C(X,Y,W) = g(\II_f(X,Y),Jf_*W) = -\omega(\II_f(X,Y),f_*W),\quad X,Y,W\in\Gamma(T\Sigma),
\]
is totally symmetric. This carries all the information of the second fundamental form $\II_f$, since $\II_f$ takes values
in the normal bundle. When we extend $C$ complex-multilinearly to $T^{1,0}\Sigma$ we obtain a cubic differential $C^{3,0}$.
The next result shows that when $f$ is minimal this essentially equals $Q_3$.
\begin{lem}
Let $f:\Sigma\to N$ be a branched minimal Lagrangian immersion into a K\" ahler $4$-fold. Then $C^{3,0}=iQ_3$, and
the shape operator $\caA_f:T\Sigma^\perp\to\End(T\Sigma)$ has norm
$\|\caA_f(\xi)\|=\|Q_3\|/\sqrt{2}$ for any unit normal vector field  $\xi\in\Gamma(T\Sigma^\perp)$.
\end{lem}
\begin{proof}
Let $X = \partial_x$, $Y=\partial_y$, $Z=\tfrac12(X-iY)$ for a local complex $z=x+iy$ on $\Sigma$. Then
\begin{equation}\label{eq:caQ}
Q_3 = \caQ dz^3,\quad \caQ = h(\nabla_Z\partial f(Z),\partial f(\bar Z)).
\end{equation}
Write the metric $\gamma = f^*g$ in local coordinates as $\gamma=\sigma |dz|^2$. Then
\[
\|Q_3\|^2= \frac{8|\caQ|^2}{\sigma^3}.
\]
To compute \eqref{eq:caQ} we recall that the expression for $h$ on $T^{1,0}N$ is related to its definition on $TN$
by
\[
h(V,W) = h(V+\bar V,W+\bar W),\quad V,W\in T^{1,0}N.
\]
Now, since $\nabla$ commutes with $J$,
\[
\nabla_Z\partial f(Z) = (\nabla_Z f_*Z)^{1,0},\quad \partial f(\bar Z) = (f_*\bar Z)^{1,0}
\]
and for any $A,B\in TN$ it is easy to check that
\[
(A+iB)^{1,0} +\overline{(A+iB)^{1,0}} = A+JB.
\]
Therefore,
\[
\nabla_Z\partial f(Z) + \overline{\nabla_Z\partial f(Z)} =
\tfrac14 (\nabla_Xf_*X-\nabla_Yf_*Y - J\nabla_Yf_*X - J\nabla_Xf_*Y)
\]
and
\[
\partial f(\bar Z) + \overline{\partial f(\bar Z)} = \tfrac12 (f_*X+Jf_*Y).
\]
Thus from \eqref{eq:caQ} we compute
\begin{multline}\label{eq:8Q}
8\caQ = h(\nabla_Xf_*X-\nabla_Yf_*Y,f_*X) - 2h(\nabla_Yf_*X,f_*Y)\\
 - i\left[h(\nabla_Xf_*X-\nabla_Yf_*Y,f_*Y)+ 2h(\nabla_Xf_*Y,f_*X)\right],
\end{multline}
Since $f$ is conformal and Lagrangian we have $h(f_*X,f_*Y)=0$,
and so
\[
h(\nabla_Yf_*X,f_*Y) = -h(f_*X,\nabla_Yf_*Y) ,\quad h(\nabla_Xf_*Y,f_*X) = -h(f_*Y,\nabla_Xf_*X).
\]
Further, in conformally flat coordinates the harmonic map equations are
\[
\nabla_Xf_*X+\nabla_Yf_*Y = 0.
\]
Applying these identities to \eqref{eq:8Q} we obtain
\begin{eqnarray*}
2\caQ & = & -\omega(\nabla_Xf_*X,f_*Y) +i\omega(\nabla_Yf_*Y,f_*X)\\
&=& -\omega(\II_f(X,X),f_*Y) + i \omega(\II_f(Y,Y),f_*X).
\end{eqnarray*}
On the other hand, a very similar calculation shows that
\[
2C(Z,Z,Z) = -\omega(\II_f(Y,Y),f_*X) -i\omega(\II_f(X,X),f_*Y) = 2i\caQ.
\]
Now recall that the shape operator for $f$ is defined by
\[
g(\caA_f(\xi)X,Y) = g(\xi,\II_f(X,Y)),\quad \xi\in\Gamma(T\Sigma^\perp),\ X,Y\in\Gamma(T\Sigma).
\]
By normalizing the local frame $X,Y$ for $T\Sigma$, and using the fact that
$\caA_f(\xi)$ is symmetric and trace-free, we obtain
\[
\|\caA_f(\xi)\|^2 = \sigma^{-2}[g(\caA_f(\xi) X,X)^2 + g(\caA_f(\xi) X,Y)^2].
\]
When we write $\xi = \alpha Jf_*X+\beta Jf_*Y$, with $\alpha^2+\beta^2=\sigma^{-1}$, we get
\begin{eqnarray*}
g(\xi,\II(X,X))& =& -\alpha\omega(\nabla_Xf_*X,f_*X)-\beta\omega(\nabla_Xf_*X,f_*Y)\\
&=& 2\alpha \caQ_I +2\beta \caQ_R,
\end{eqnarray*}
for $\caQ = \caQ_R+i\caQ_I$, while
\begin{eqnarray*}
g(\xi,\II(X,Y))& =& -\alpha\omega(\nabla_Xf_*Y,f_*X)-\beta\omega(\nabla_Xf_*Y,f_*Y)\\
&=& 2\alpha \caQ_R -2\beta \caQ_I.
\end{eqnarray*}
It follows that
\[
\|\caA_\xi\|^2  =  \sigma^{-2}(\alpha^2+\beta^2)4(\caQ_R^2+\caQ_I^2) = 4|\caQ|^2\sigma^{-3} = \tfrac12 \|Q_3\|^2.
\]

\end{proof}

\subsection{Minimal Lagrangian surfaces in $\C^2$, $\CP^2$ and $\CH^2$}

The observations in the previous section are particularly useful in the study of minimal Lagrangian surfaces in the three
model spaces for \Kah $4$-folds of constant holomorphic section curvature: $\C^2$, $\CP^2$ and $\CH^2$.
We will outline here why such surfaces are uniquely determined, up to isometries of the ambient space,
by their induced metric and the cubic holomorphic differential $Q_3$: this is based on the exposition in \cite{loftin-mcintosh13}, where
only the case of $\CH^2$ is treated but it is straightforward to adapt this to $\CP^2$ by a simple change of sign.
We will summarize the derivation of the equations
governing such surfaces in such a way that their link to a certain version of the \emph{Toda lattice equations}\index{Toda lattice equations} becomes
apparent, and then comment further on this link below.

We start with the case of $\co^2$, although it is not governed by the Toda lattice equations, since the derivation is particularly simple in this case.
On $\co^2$, consider the Hermitian inner product $$\langle v,w \rangle = v_1\bar w_1 + v_2 \bar w_2$$
and metric and symplectic form given by
$$ \langle v,w\rangle = g(v,w) - i\,\omega(v,w).$$
Now let $f \!: \mathcal D\to \co^2$, where $\mathcal D$ is a simply-connected domain in $\co$, and $f$ will be our minimal Lagrangian immersion.  Assume that $f^*g$ is conformal to the standard metric on $\co$.  We have the following conditions for $f$ to be minimal Lagrangian:
\begin{eqnarray*}
\mbox{conformal} &\quad\Longleftrightarrow\quad & \langle f_z, f_{\bar z} \rangle = 0, \\
\mbox{Lagrangian} &\quad\Longleftrightarrow\quad & \langle f_z,f_z\rangle = \langle f_{\bar z}, f_{\bar z} \rangle , \\
\mbox{harmonic} &\quad\Longleftrightarrow\quad & f_{z\bar z} = 0.
\end{eqnarray*}
Set $e^{2\psi} = \langle f_z,f_z\rangle = \langle f_{\bar z}, f_{\bar z} \rangle$ so that $f^*g = 2e^{2\psi}|dz|^2$.

Now we determine the second derivatives of $f$ in terms of its first derivatives.  Differentiate $\langle f_z, f_z \rangle = e^{2\psi}$ by $z$ to find
$$ \langle f_{zz}, f_z\rangle + \langle f_z,f_{z\bar z} \rangle = 2e^{2\psi}\psi_z.$$
But now $f_{z\bar z}=0$ shows $\langle f_{zz} , f_z\rangle = 2e^{2\psi}\psi_z$.  We also define $Q = -\langle f_{zz}, f_{\bar z} \rangle$, and we may again use the harmonicity of $f$ to show $Q_{\bar z}=0$. Since $f_z$ and $f_{\bar z}$ are orthogonal vectors of norm $e^\psi$, we  find
$$f_{zz}  = 2\psi_z f_z -Q e^{-2\psi}f_{\bar z}.$$
We also compute $0 = \langle f_z,f_{\bar z}\rangle_z = \langle f_{zz}, f_{\bar z} \rangle + \langle f_z, f_{\bar z \bar z} \rangle$ to show that $\langle f_z, f_{\bar z \bar z} \rangle = Q$ and so $\langle f_{\bar z \bar z}, f_z \rangle = \bar Q$.  Also compute $\langle f_{\bar z}, f_{\bar z} \rangle_{\bar z} = 2e^{2\psi}\psi_{\bar z}$ to show
$$f_{\bar z \bar z} = \bar Q e^{-2\psi}f_z + 2 \psi_{\bar z} f_{\bar z}.$$

Again, one can check that the integrability conditions boil down to $(f_{zz})_{\bar z} = (f_{z\bar z})_z = 0$.  Compute
\begin{eqnarray*}
0=(f_{zz})_{\bar z} &=& 2\psi_{z\bar z} f_z + 2\psi_z f_{z\bar z} - Q_{\bar z} e^{-2\psi}f_{\bar z} + 2Q e^{-2\psi}\psi_{\bar z} f_{\bar z} - Qe^{-2\psi}f_{\bar z \bar z} \\
&=& 2\psi_{z\bar z} f_z -|Q|^2 e^{-4\psi} f_z .
\end{eqnarray*}
Thus we have proved
\begin{thm}
A conformal immersion $f\!:\mathcal D\to \co^2$ is minimal Lagrangian if and only if for $f^*g = 2e^{2\psi}|dz|^2$ and $Q = -\langle f_{zz},f_{\bar z} \rangle$, we have
$$2\psi_{z\bar z} = |Q|^2 e^{-4\psi}.$$
  Moreover, let $Q\!:\mathcal D \to \co$ be holomorphic and $\psi\!:\mathcal D \to \re$ satisfy $2\psi_{z\bar z} = |Q|^2 e^{-4\psi}$.  Then for any $z_0\in \mathcal D$ and $f_0,p,q\in\co^2$ so that $\langle p,q\rangle = 0$ and $\langle p,p\rangle = \langle q,q\rangle = e^{2\psi(z_0)}$, there is a unique minimal Lagrangian immersion $f\!: \mathcal D\to \co^2$ so that $f(z_0) = f_0$, $f_z(z_0) = p$, $f_{\bar z}(z_0) = q$, $f^*g = 2e^{2\psi}|dz|^2$, and $Q = -\langle f_{zz},f_{\bar z} \rangle$.
\end{thm}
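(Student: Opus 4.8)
The proof splits into two parts: the \emph{only if} direction, which the computation preceding the theorem already settles, and the existence-and-uniqueness assertion, which carries the substance. For the former I would simply record that, assuming $f$ conformal, Lagrangian and harmonic, differentiating the normalizations $\langle f_z,f_z\rangle=\langle f_{\bar z},f_{\bar z}\rangle=e^{2\psi}$ and $\langle f_z,f_{\bar z}\rangle=0$ and invoking $f_{z\bar z}=0$ produces the structure equations for $f_{zz}$ and $f_{\bar z\bar z}$ displayed above, whose single integrability condition $(f_{zz})_{\bar z}=(f_{z\bar z})_z=0$ collapses to $2\psi_{z\bar z}=|Q|^2e^{-4\psi}$. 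The construction below then supplies the converse correspondence, so the two directions together justify the \emph{if and only if}.

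For the \emph{Moreover}, the plan is the Frobenius/zero-curvature method used for affine spheres in Theorem~\ref{aff-sph-integrate}. Treating $\psi$ and the holomorphic $Q$ as given data, I would first seek a $\co^2$-valued frame $(u,v)$ --- destined to be $(f_z,f_{\bar z})$ --- solving the linear first-order system
\[
u_z=2\psi_z u-Qe^{-2\psi}v,\quad u_{\bar z}=0,\quad v_z=0,\quad v_{\bar z}=\bar Qe^{-2\psi}u+2\psi_{\bar z}v,
\]
with initial value $(u(z_0),v(z_0))=(p,q)$. The key computation is that the compatibility conditions $\partial_{\bar z}u_z=\partial_z u_{\bar z}$ and $\partial_z v_{\bar z}=\partial_{\bar z}v_z$ for this overdetermined system reduce, after using $Q_{\bar z}=0$, to $(2\psi_{z\bar z}-|Q|^2e^{-4\psi})u=0$ and $(2\psi_{z\bar z}-|Q|^2e^{-4\psi})v=0$; since $\psi$ satisfies the equation, Frobenius yields a unique such $(u,v)$ on the simply-connected $\mathcal{D}$. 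Because $u_{\bar z}=v_z=0$, the $\co^2$-valued one-form $u\,dz+v\,d\bar z$ is closed, hence exact, so there is a unique $f$ with $f(z_0)=f_0$, $f_z=u$, $f_{\bar z}=v$.

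The main obstacle is to verify that the algebraic normalizations, imposed only at $z_0$ through the hypotheses on $p,q$, propagate to all of $\mathcal{D}$. For this I would study the scalar functions $a=\langle u,u\rangle$, $b=\langle v,v\rangle$ and $c=\langle u,v\rangle$. Differentiating and substituting the frame equations shows that $c$ is holomorphic while $d:=a-b$ is real, and that $(c,d)$ solves a homogeneous linear first-order system; since $c(z_0)=\langle p,q\rangle=0$ and $d(z_0)=\langle p,p\rangle-\langle q,q\rangle=0$, uniqueness forces $c\equiv0$ and $a\equiv b$ on $\mathcal{D}$. With $c=0$ the equations for $a$ collapse to $\partial_z(ae^{-2\psi})=\partial_{\bar z}(ae^{-2\psi})=0$, so $ae^{-2\psi}$ is simultaneously holomorphic and antiholomorphic, hence equal to the constant $ae^{-2\psi}(z_0)=1$; thus $a=b=e^{2\psi}$ everywhere. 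Finally I would read off the conclusions: $\langle f_z,f_{\bar z}\rangle=c=0$ is conformality, $\langle f_z,f_z\rangle=\langle f_{\bar z},f_{\bar z}\rangle$ is the Lagrangian condition, $f_{z\bar z}=u_{\bar z}=0$ is harmonicity, positivity of $e^{2\psi}$ makes $f$ an immersion with $f^*g=2e^{2\psi}|dz|^2$, and, using $\langle u,v\rangle=0$ and $\langle v,v\rangle=e^{2\psi}$, the identity $-\langle f_{zz},f_{\bar z}\rangle=-\langle 2\psi_z u-Qe^{-2\psi}v,v\rangle=Qe^{-2\psi}\langle v,v\rangle=Q$ recovers the cubic differential. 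Uniqueness of $f$ is inherited from the uniqueness in the Frobenius step together with the normalization $f(z_0)=f_0$.
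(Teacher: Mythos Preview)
Your proposal is correct and follows the same Frobenius/zero-curvature strategy the paper uses (and makes explicit for affine spheres in Theorem~\ref{aff-sph-integrate}). The paper's own treatment of the \emph{Moreover} clause is essentially implicit---it records the integrability computation $(f_{zz})_{\bar z}=0$ and declares the theorem proved---so your write-up is in fact more complete: you spell out the propagation of the Hermitian constraints $\langle u,v\rangle=0$ and $\langle u,u\rangle=\langle v,v\rangle=e^{2\psi}$ from $z_0$ to all of $\mathcal D$, which the paper leaves tacit. One small point of phrasing: the linear homogeneous system you invoke for $(c,d)$ actually involves $\bar c$ as well (e.g.\ $d_z=2\psi_z d-2Qe^{-2\psi}\bar c$), so it is cleanest to say that the triple $(c,\bar c,d)$, or equivalently the real vector $(\operatorname{Re}c,\operatorname{Im}c,d)$, satisfies a homogeneous linear first-order system with zero initial data; uniqueness then gives $c\equiv 0$, $d\equiv 0$ as you claim.
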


Now let $N$ stand for either $\CP^2$ or $\CH^2$, each with its
Hermitian metric of constant holomorphic sectional curvature, which we normalize to $\pm 4$. We will view $N$ as a
Hermitian symmetric space: $N\simeq G/K$, where $G$ is $U(3)$ for $\CP^2$ and $U(2,1)$ for $\CH^2$. Each manifold can be
modeled using projective geometry. To treat these simultaneously, equip $\C^3$ with one or other of the
Hermitian forms
\[
\g{v}{w}_\pm = v_1\bar w_1 + v_2\bar w_2 \pm v_3\bar w_3.
\]
While $\CP^2$ is the space of all complex lines in $\C^3$, $\CH^2$ is the space of all complex lines in
\[
W_- = \{w\in\C^3:\g{w}{w}_-<0\}.
\]
In either case, for a line $[w]\in N$, the form $\g{\ }{\ }_{\pm}$ is positive-definite on its perpendicular $[w]^\perp$.
Let $L\subset N\times\C^3$ denote the tautological subbundle over the space of lines $N$, then we have the standard
identification
\[
T^{1,0}N\simeq \Hom(L,L^\perp) \subset N\times\End(\C^3);\quad Z\mapsto \pi_L^\perp Z,
\]
where $\pi_L:\C^3\to L$ is the orthogonal projection for the Hermitian form and we are thinking of $Z$ as a derivation on
local sections of $L$. In this model, the Hermitian metric on $N$ can be expressed as
\[
h(Z,W) = \g{\pi_L^\perp Z\sigma}{\pi_L^\perp W\sigma}_{\pm},\quad\ \text{whenever}\ |\sigma|_{\pm}=\pm 1.
\]
It is easy to show that $G$ acts transitively and isometrically on $N$, using the standard action of $\mathrm{GL}(3,\C)$ on
projective $3$-space, and that the line $[e_3]$ generated by $e_3=(0,0,1)$ is a point on $N$ in this model. We will take
$K$ to be the isotropy group of this point.

Now suppose $\caD\subset\C$ is an open $1$-connected domain with complex coordinate $z=x+iy$ and $f:\caD\to N$ is a
minimal Lagrangian immersion. The Lagrangian condition means that the pullback $f^*L$ of the tautological bundle has a
smooth section $\varphi$ with $|\varphi|_{\pm}=\pm 1$ which is horizontal, i.e., $\g{\varphi}{d\varphi}_\pm=0$. Combining this with
the fact that $f$ must be a conformal immersion ensures that the triple
\[
f_1 = \frac{\varphi_z}{|\varphi_z|_\pm},\quad f_2 = \frac{\varphi_{\bar z}}{|\varphi_{\bar z}|_\pm},\quad f_3 = \varphi,
\]
forms a $\g{\ }{\ }_\pm$-unitary frame for $f$, i.e., the matrix $F$ with those
columns gives a map $F:\caD\to G$ with $F\cdot[e_3] = [Fe_3]=f$. In fact the map $f$ is conformal and Lagrangian precisely
when this triple gives such a frame and $|\varphi_z|_\pm=|\varphi_{\bar z}|_\pm$. When we define $e^\psi=|\varphi_z|_\pm$,
the induced metric is given by $2e^{2\psi}|dz|^2$.

This frame has a corresponding Maurer-Cartan $1$-form $\alpha_\pm = F^{-1}dF$, which is a $\fg$-valued $1$-form over $\caD$
(with $\fg$ the Lie algebra of $G$). Let $H=\tfrac12\tr\II_f$ denote the mean curvature of $f$. It can be shown that the
\emph{mean curvature $1$-form}\index{mean curvature 1-form} of $f$, $\sigma_H = \omega(H,df)$, equals $-\tfrac{i}{2}\tr\alpha_\pm$, and therefore $f$
is minimal precisely when $\det F$ is constant. In that case, a straightforward calculation expresses $\alpha_\pm$ in the
form
\begin{equation}\label{eq:alpha}
\alpha_\pm = \begin{pmatrix}
\psi_z  & 0 & e^\psi\\
Qe^{-2\psi} &  -\psi_z & 0\\
0& \mp e^\psi&0
\end{pmatrix}dz\\
+ \begin{pmatrix}
-\psi_{\bz}  & -{\bar Q}e^{-2\psi} & 0\\
0 & \psi_\bz  & e^\psi\\
\mp e^\psi & 0& 0
\end{pmatrix}d\bz,
\end{equation}
where $Q = \g{\varphi_{zz}}{\varphi_{\bar z}}_{\pm}$ so that $Q_3 = Qdz^3$.
The Maurer-Cartan equations for $\alpha_\pm$, $d\alpha_\pm +\tfrac12[\alpha_\pm\wedge\alpha_\pm]=0$, are equivalent to
the elliptic p.d.e.\
\begin{equation}\label{eq:LagToda}
2 \psi_{z\bz} +\lambda e^{2\psi} - |Q|^2e^{-4\psi}=0, \quad \lambda=\pm 1.
\end{equation}
These equations also have a coordinate-invariant form appropriate for the case where we want to consider $\caD$
to be the univesal cover of a compact Riemann surface. Let $\mu$ be a metric of curvature $\kappa_\mu$ on $\caD$,
and write the induced metric as $e^u\mu = 2e^{2\psi}|dz|^2$. Then the previous equation has the form
\begin{equation}\label{eq:GlobalLagToda}
\Delta_\mu u +\lambda 2e^u - 2\|Q_3\|_\mu^2e^{-2u}-2\kappa_\mu=0, \lambda=\pm 1.
\end{equation}

\subsection{The two-dimensional Toda equations for $\fa_2^{(2)}$}

For $\lambda=\pm 1$, the four equations
\begin{eqnarray}
2 \psi_{z\bz} +\lambda e^{2\psi} + |Q|^2e^{-4\psi}=0,\label{eq:affinespheres}\\
2 \psi_{z\bz} +\lambda e^{2\psi} - |Q|^2e^{-4\psi}=0,\label{eq:minlag}
\end{eqnarray}
obtained from, respectively, affine spheres and minimal Lagrangian surfaces,
are real forms of a slight generalization of the two-dimensional
Toda equations for the affine Dynkin diagram $\fa_2^{(2)}$.
There is a two-dimensional Toda equation associated to each affine Dynkin diagram \cite[\S 10]{drinfeld-sokolov84}.
These occur in many types of geometry,
ranging from their role (in the form of the sine-Gordon, or sinh-Gordon, equations) in the theory surfaces of
constant negative Gaussian or constant mean curvature (see, e.g., \cite{bobenko94}), to the their appearance in the study of
superconformal tori in spheres or complex projective spaces \cite{bolton-pw95} and primitive harmonic maps \cite{burstall-pedit94}.
For $\fa_2^{(2)}$ the usual form of the Toda equations is given by
\begin{equation}
\frac{\partial^2}{\partial z\partial w}\log(a^2) - a^2 +a^{-4}=0,
\end{equation}
in which all variables, independent and dependent, are considered to be complex. Notice that there is no explicit
term representing the norm of a cubic differential. That term can be included by using a more general form:
\begin{equation}\label{eq:Toda}
\frac{\partial^2}{\partial z\partial w}\log(a^2) +\lambda a^2 +QRa^{-4}=0,\quad \lambda=\pm 1.
\end{equation}
Here $Q,R$ are independent functions of both complex variables $z$ and $w$.
We have included a sign option $\lambda$, even though this is redundant when all variables are complex, to make the
correspondence with the above real forms simpler.

The principal property of these equations \eqref{eq:Toda} is that they
are the \emph{zero-curvature} (or Maurer-Cartan) equations\index{zero-curvature equations}\index{Maurer-Cartan equations} for the loop of flat $\fa_2\simeq\spl(3,\C)$ connections
\[
d+\alpha_\zeta = d+
\begin{pmatrix} a_za^{-1} & 0 & -\zeta\lambda a\\ \zeta Qa^{-2} &-a_za^{-1}  & 0 \\ 0 &-\zeta \lambda a &0\end{pmatrix}dz
+
\begin{pmatrix} -a_wa^{-1} &  \zeta^{-1}R a^{-2} & 0 \\ 0 & a_wa^{-1} & \zeta^{-1} a \\ \zeta^{-1}a & 0 & 0\end{pmatrix}dw.
\]
Here $\zeta$ is an auxiliary $\C^*$ parameter (often referred to in the integrable systems literature
as the \emph{spectral
parameter}\index{spectral parameter}). We can think of $\alpha_\zeta$ as a $1$-form with values in the loop algebra
\[
L(\fa_2,\nu) = \{X:\C^*\to\fa_2|\ X(-\zeta) = \nu(X(\zeta))\},
\]
where $\nu$ is the outer involution of $\spl(3,\C)$ defined by
\[
\nu(A) = -TA^tT^{-1},\quad A\in\spl(3,\C),\ T = \begin{pmatrix} 0&1&0\\ 1&0&0\\ 0&0&1\end{pmatrix}.
\]
It is this outer involution which corresponds to the symmetry of the Dynkin diagram of $\fa_2$ through which the diagram
for $\fa_2^{(2)}$ arises (see, for example, \cite[Ch X,\ \S 5]{helgason01}).

By imposing different reality conditions on these variables we can obtain all four of the equations above
as follows. Set $w=\bar z$ and $a=e^\psi$, where $\psi$ is a real valued function. Then:
\begin{enumerate}
\item with $R=\bar Q$ and $\lambda=-1$, we obtain the equation for hyperbolic affine spheres;
\item with $R=\bar Q$ and $\lambda=1$, we obtain the equation for elliptic affine spheres;
\item with $R=-\bar Q$ and $\lambda = -1$, we obtain the equation for minimal Lagrangian surfaces in $\CH^2$;
\item with $R=-\bar Q$ and $\lambda = 1$, we obtain the equation for minimal Lagrangian surfaces in $\CP^2$.
\end{enumerate}
These four real forms correspond to the extra requirement that $\alpha_\zeta$ take values in a real form of $L(\fa_2,\nu)$.
Such real forms are obtained by
considering loops which are additionally equivariant with respect to a real involution on $\C^*$ and a real involution on
$\fa_2$. To describe these let us denote, for any $A\in\spl(3,\C)$, its Hermitian transpose by $A^\dagger$ and its
``Lorentz-Hermitian'' transpose by $A^\star$ (i.e., $A\in\su(2,1)$ precisely when $A^\star = -A$).   Given this, to
obtain the four reality conditions on $z,w,Q,R,a$ above, it suffices to require $\alpha_\zeta$ to take values in, respectively:
\begin{enumerate}
\item $\{X\in L(\fa_2,\nu): X(-\bar\zeta^{-1}) = -X(\zeta)^\dagger\}$,
\item $\{X\in L(\fa_2,\nu): X(-\bar\zeta^{-1}) = -X(\zeta)^\star\}$,
\item $\{X\in L(\fa_2,\nu): X(\bar\zeta^{-1}) = -X(\zeta)^\star\}$,
\item $\{X\in L(\fa_2,\nu): X(\bar\zeta^{-1}) = -X(\zeta)^\dagger\}$.
\end{enumerate}
In cases (c) and (d) the real involution on $\C^*$ has a fixed curve (the unit circle) so that when $\alpha_\zeta$ is evaluated
on this curve it takes values in the respective real form of $\fa_2$. Up to a constant gauge transformation,
we obtain the Maurer-Cartan forms in \eqref{eq:alpha}. But in cases (a) and (b) the real involution on $\C^*$ has no fixed
points. Hence the real forms $\su(3)$ and $\su(2,1)$ do not play an explicit role in the corresponding geometry.

For harmonic maps this loop algebra observation plays an important role in understanding the construction of certain classes
of solutions (see, e.g.\ \cite{burstall-pedit95,dorfmeister-pedit-wu98,mcintosh98,mcintosh03,segal89,uhlenbeck89}). For global solutions this has only been successful when
the domain is either $S^2$ or a torus and the codomain is $\CP^2$.

\subsubsection{The harmonic map equations from the loop of flat connections.} The four reality conditions above give us four
slightly different loops of flat connections. Using the parameters $\e=\pm 1$ and $\lambda = \pm 1$ we can write the local
connection $1$-forms as
\begin{equation}\label{eq:realalpha}
\alpha_\zeta=
\begin{pmatrix}
\psi_z  & 0 & -\zeta\lambda e^\psi\\ \zeta Qe^{-2\psi} &  -\psi_z & 0\\ 0&  -\zeta\lambda e^\psi&0
\end{pmatrix}dz
+ \begin{pmatrix}
-\psi_{\bz}  & \zeta^{-1}\e {\bar Q}e^{-2\psi} & 0\\ 0 & \psi_\bz  & \zeta^{-1}e^\psi\\ \zeta^{-1}e^\psi & 0& 0
\end{pmatrix}d\bz.
\end{equation}
The fact that this connection is flat \emph{for all} $\zeta\in\C^*$ tells us that in each of the four cases there exists a
local solution $F$ to the equation $F^{-1}dF = \alpha_1$, taking values in $\mathrm{ SL}(3,\C)$, which frames a harmonic map into a
symmetric space of one of the real forms $\mathrm{SU}(3)$, $\mathrm{SU}(2,1)$ or $\mathrm{SL}(3,\R)$. The argument is slightly different between the
two cases $\e=\pm 1$, but in principle it is based on the following well-known result.
\begin{thm}[cf.\ \cite{burstall-pedit94}]
Let $G/K$ be a symmetric space of a real reductive Lie group $G$, and write the symmetric-space decomposition of the
Lie algebra of $G$ as $\fg=\fk+\fm$, where $\fk$ is the Lie algebra of $K$. Then a smooth map $f:D\to G/K$ of an open
domain $D\subset\C$ is a harmonic map if and only if it admits a frame $F:D\to G$ whose Maurer-Cartan $1$-form
$\alpha = F^{-1}dF$ satisfies the equations
\[
d*\alpha_\fm +[\alpha_\fk\wedge *\alpha_\fm]=0,
\]
where $\alpha = \alpha_\fk+\alpha_\fm$ is the symmetric space splitting of $\alpha$.

Conversely, suppose $\alpha\in\Omega_1(D)\otimes\fg$ is a Lie algebra valued $1$-form over $D$ for which
\begin{equation}\label{eq:flatloop}
d\alpha_\zeta + \tfrac{1}{2}[\alpha_\zeta\wedge\alpha_\zeta]=0,
\end{equation}
where
\begin{equation}\label{eq:loop}
\alpha_\zeta = \zeta\alpha_\fm^{1,0} + \alpha_\fk + \zeta^{-1}\alpha_\fm^{0,1}.
\end{equation}
By integrating the
Maurer-Cartan equations for $\alpha$ we obtain a frame $F:D\to G$ for a harmonic map $f:D\to G/K$.
\end{thm}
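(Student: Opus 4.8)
The plan is to reduce both halves of the statement to a single observation: when the flatness of the loop $\alpha_\zeta$ of \eqref{eq:flatloop}--\eqref{eq:loop} is decomposed into powers of $\zeta$, it splits exactly into the ordinary Maurer--Cartan equation for $\alpha$ together with the harmonic map equation. The geometric input, which I would establish first, is the standard frame dictionary for maps into a symmetric space. Since $D$ is simply connected the bundle $G\to G/K$ is trivial along $f$, so $f$ lifts to a frame $F:D\to G$ with $f=F\cdot o$, $o=eK$; set $\alpha=F^{-1}dF=\alpha_\fk+\alpha_\fm$. Using the reductive relations $[\fk,\fk]\subset\fk$, $[\fk,\fm]\subset\fm$, $[\fm,\fm]\subset\fk$, one identifies $f^{-1}TN$ with $D\times_K\fm$ so that $df$ corresponds to $\alpha_\fm$ and the pullback of the Levi--Civita (canonical) connection corresponds to $\nabla=d+\ad(\alpha_\fk)$; thus $d^\nabla\beta=d\beta+[\alpha_\fk\wedge\beta]$ on $\fm$-valued forms. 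On a surface the harmonic map equation $\tau(f)=0$ is equivalent to $d^\nabla(\ast\,df)=0$, which in the frame reads
\[
d\ast\alpha_\fm+[\alpha_\fk\wedge \ast\alpha_\fm]=0 .
\]
This is exactly the first assertion (the equation transforms covariantly under the gauge $F\mapsto Fk$, so it is independent of the chosen frame). Verifying this dictionary carefully is, to my mind, the main obstacle; everything after it is formal bookkeeping.

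For the converse I would expand \eqref{eq:flatloop} with $\alpha_\zeta=\zeta\alpha_\fm^{1,0}+\alpha_\fk+\zeta^{-1}\alpha_\fm^{0,1}$ and collect powers of $\zeta$. Writing $A=\alpha_\fm^{1,0}$, $B=\alpha_\fk$, $C=\alpha_\fm^{0,1}$ and using that the wedge--bracket of $\fg$-valued $1$-forms is symmetric, the five coefficients are
\[
\zeta^{\pm2}:\ \tfrac12[A\wedge A],\ \tfrac12[C\wedge C];\quad
\zeta^{\pm1}:\ dA+[B\wedge A],\ dC+[B\wedge C];\quad
\zeta^0:\ dB+\tfrac12[B\wedge B]+[A\wedge C].
\]
The $\zeta^{\pm2}$ terms vanish automatically because $D$ is one complex-dimensional: $A=P\,dz$ forces $[A\wedge A]=[P,P]\,dz\wedge dz=0$, and likewise for $C$. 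The $\zeta^0$ term is precisely the $\fk$-component of the ordinary Maurer--Cartan equation, since on a surface $\tfrac12[\alpha_\fm\wedge\alpha_\fm]=[A\wedge C]$. So flatness for all $\zeta$ is equivalent to the two $\zeta^{\pm1}$ equations together with the $\fk$-part of the Maurer--Cartan equation.

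It remains to read off the $\zeta^{\pm1}$ equations through their sum and difference. Their \emph{sum} is $d(A+C)+[B\wedge(A+C)]=0$, the $\fm$-component of the Maurer--Cartan equation for $\alpha$. Their \emph{difference} involves $A-C=\alpha_\fm^{1,0}-\alpha_\fm^{0,1}$; since the Hodge star acts as $-i$ on $(1,0)$-forms and $+i$ on $(0,1)$-forms, one has $A-C=i\ast\alpha_\fm$, so the difference is
\[
i\bigl(d\ast\alpha_\fm+[\alpha_\fk\wedge \ast\alpha_\fm]\bigr)=0,
\]
the harmonic map equation of the first paragraph. Hence flatness of $\alpha_\zeta$ for all $\zeta$ is equivalent to $\alpha$ satisfying its full Maurer--Cartan equation \emph{and} $f$ being harmonic. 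Finally, taking $\zeta=1$ gives $\alpha_1=\alpha$ flat, so by the Frobenius theorem on the simply connected domain $D$ there is an $F:D\to G$, unique up to a left constant, with $F^{-1}dF=\alpha$; the difference equation then guarantees that $f=F\cdot o$ is harmonic. I expect the power-of-$\zeta$ computation and the integration step to be entirely routine, the only real content lying in the frame dictionary established at the outset.
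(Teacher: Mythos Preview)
Your proof is correct and is precisely the standard argument the paper is invoking. The paper does not give a proof of this theorem at all: it cites Burstall--Pedit and offers only the one-line summary ``the condition \eqref{eq:flatloop} is exactly the condition that $\alpha$ simultaneously satisfies the Maurer--Cartan equations and the harmonic map equations.'' Your power-of-$\zeta$ expansion, with the sum/difference reading of the $\zeta^{\pm1}$ coefficients, is exactly the computation that substantiates that sentence.
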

The point is that the condition \eqref{eq:flatloop} is exactly the condition that $\alpha$
simultaneously satisfies the Maurer-Cartan equations and the harmonic map equations.

Now we can explain how the loop of flat connections in \eqref{eq:realalpha} is related to harmonic maps.
\begin{enumerate}
\item For $\e = 1$, recall that $\alpha$ arose from the $\mathrm{SL}(3,\C)$ frame for an affine sphere
\begin{equation}\label{eq:sphereframe}
F=(\frac{1}{\sqrt{2}e^{\psi}}f_z\ \frac{1}{\sqrt{2}e^{\psi}}f_\bz\ \xi) =
(\frac{1}{\sqrt{2}e^{\psi}}f_x\ \frac{1}{\sqrt{2}e^{\psi}}f_y\ \xi)
\begin{pmatrix} 1/2 & 1/2 & 0\\ -i/2 & i/2 & 0 \\ 0 & 0 & 1\end{pmatrix}.
\end{equation}
This shows that $F$ is related to an $\mathrm{SL}(3,\R)$ frame by a constant gauge, and therefore $\alpha$ takes values in a real
subalgebra $\fg\subset \spl(3,\C)$ which is conjugate to $\spl(3,\R)$. It is easy to check that the outer involution $\nu$
preserves $\fg$, on which it induces a symmetric space-decomposition $\fg = \fk+\fm$, with $\fk\simeq\so(3,\R)$. Further,
$\alpha_\zeta$ in \eqref{eq:realalpha} has the form \eqref{eq:loop} for this splitting. The Toda equations are the condition
that this satisfies \eqref{eq:flatloop}, and therefore we obtain from $F$ (after a constant gauge transformation) a
harmonic map into $\mathrm{ SL}(3,\R)/\mathrm{SO}(3,\R)$.
\item For $\e=-1$ we use a slight generalization of the previous theorem. The loop of flat connections $\alpha_\zeta$ is
equivariant not only for the outer involution $\nu$ but for an order-$6$ automorphism of the form $\nu\circ\sigma$, where
$\sigma$ is the Coxeter automorphism of $\spl(3,\C)$ ($\sigma$ is an inner automorphism of order $3$ which commutes with $\nu$,
cf.\ \cite{mcintosh03}). In this case the equations \eqref{eq:flatloop} tell us (cf.\
\cite{burstall-pedit94}, \cite{mcintosh03}, \cite{loftin-mcintosh13}) that $F$ frames a
primitive harmonic map into a $6$-symmetric space $G/S$, where $G=\mathrm{SU}(3)$ for $\lambda = 1$ and $G=\mathrm{ SU}(2,1)$ for $\lambda=-1$.
Here $S$ is an $S^1$ subgroup of the maximal torus of diagonal matrices in $G$ (those fixed by the automorphism). Thus we have
a homogeneous projection $G/S\to G/K$, where $G/K$ is $\CP^2$ or $\CH^2$ according to the sign of $\lambda$. Under this
homogeneous projection a primitive harmonic map is still harmonic, therefore $F$ frames a harmonic map into either $\CP^2$ or
$\CH^2$.
\end{enumerate}

\subsection{Holomorphic representations}
As we have seen above, solutions to the equations
$$ 2\psi_{z\bar z} \pm |Q|^2e^{-4\psi} \pm e^{2\psi} =0 $$
are the Toda lattice for $\mathfrak a_2^{(2)}$.  The remaining cases
$$2\psi_{z\bar z}\pm |Q|^2e^{-4\psi} = 0$$
do not come from the Toda lattice, but are
completely integrable in that they can easily be derived from holomorphic data without solving any additional PDEs.

\subsubsection{Weierstrass representation for parabolic affine spheres}\index{Weierstrass representation}

A parabolic affine sphere in $\re^{n+1}$ with affine normal $e_{n+1}$ is given by the graph of a convex function $u$ satisfying the Monge-Amp\`ere equation $\det u_{ij} =1$.  In dimension two, there is a classical relationship between solutions to the Monge-Amp\`ere equation and harmonic functions (this was know to Darboux). This naturally leads to a description of parabolic affine spheres in $\re^3$ by holomorphic functions.

In fact, there is a natural Weierstrass-type formula for parabolic affine spheres, which (in the more general case of affine maximal surfaces) is originally due to Terng \cite{terng83}.  See also \cite{calabi88,li89,greene-svy90}.  We give the version of Ferrer-Mart\'inez-M\'ilan \cite{ferrer-mm99}: On a simply connected domain $\mathcal D\subset\C$, let $F,G$ be two holomorphic functions satisfying $|F'|<|G'|$. Then
$$ \left(\frac12(G+\bar F), \frac13 (|G|^2-|F|^2), \frac14\, {\rm Re}(FG) - \frac12 \int F\,dG\right)$$ is a parabolic affine sphere with affine normal $(0,0,1)$.  All such parabolic affine spheres can be described in this way.

\subsubsection{Minimal Lagrangian surfaces in $\C^2$.} \label{c2-subsec}
All minimal surfaces in Euclidean $\re^n$ have a Weierstrass representation, since for $f\!:\Sigma\to \re^n$ the minimal surface equations are $\Delta f=0$.  Then one can replace $3$ by $n$ in the classical Weierstrass-Enneper argument to retrieve the holomorphic data.  However, we emphasize a different construction relating minimal Lagrangian surfaces in $\C^2$ to special Lagrangian surfaces and then to complex curves in a rotated complex structure.

In $\C^2$ (or any Calabi-Yau manifold), minimal Lagrangian submanifolds are closely related to special Lagrangian submanifolds\index{special Lagrangian}.  A surface in $\co^2$ is called special Lagrangian if it is calibrated by ${\rm Re}(dz^1\wedge dz^2)$.  Harvey-Lawson  \cite{harvey-lawson82} show that any minimal Lagrangian surface in $\co^2$ is calibrated by ${\rm Re}(e^{i\theta}dz^1\wedge dz^2)$, where $\theta$ is a real constant. In other words, the minimal Lagrangian surface is special Lagrangian up to a constant Lagrangian angle (or phase) $\theta$.

In dimension four, a Calabi-Yau manifold is automatically hyper-K\"ahler (this follows since a Calabi-Yau manifold has local holonomy contained in $\mathrm{SU}(2)\sim \mathrm{Sp}(1)$, and 4-manifolds with holonomy in $\mathrm {Sp}(1)$ are hyper-K\"ahler).  A hyper-K\"ahler manifold is a manifold with a Riemannian metric and three complex structure tensors $I,J,K$ which satisfy the quaternionic relations $I^2=J^2=K^2=IJK= -{\rm id}$, and the metric is K\"ahler with respect to each complex structure.  Moreover, for $\alpha,\beta,\gamma$ real constants so that $\alpha^2+\beta^2+\gamma^2=1$, $\alpha I +\beta J + \gamma K$ is also a complex structure.  A special Lagrangian submanifold of a hyper-K\"ahler 4-manifold can be characterized as a complex submanifold with respect to a rotated complex structure.  This is because there is a hyper-K\"ahler rotation which takes ${\rm Re}(dz^1\wedge dz^2)$ to the K\"ahler form $\omega$, and $\omega$ calibrates complex surfaces.

In particular, a minimal Lagrangian surface in $\C^2$ is special Lagrangian up to a rotation of the holomorphic volume form $dz^1\wedge dz^2$ by a constant Lagrangian angle.

\begin{prop}
Let $I$ be the standard complex structure on $\C^2$, and consider the standard flat metric as a hyper-K\"ahler metric.
Any minimal Lagrangian surface in $\C^2$ is holomorphic with respect to one of the complex structures  orthogonal to $I$.
\end{prop}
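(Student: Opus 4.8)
The plan is to combine the Harvey--Lawson constant-phase theorem with an explicit hyper-K\"ahler rotation and Wirtinger's inequality; the first two ingredients have in effect already been recalled above. Let $S\subset\C^2$ be a minimal Lagrangian surface for the standard data $(g,I,\omega_I)$, let $z^1,z^2$ be the standard complex coordinates, and set $\Omega=dz^1\wedge dz^2$. First I would apply Harvey--Lawson \cite{harvey-lawson82}, as stated above, to conclude that $S$ is calibrated by $\Re(e^{i\theta}\Omega)$ for some real constant $\theta$. This is exactly where both hypotheses enter: Lagrangianity makes the Lagrangian angle a well-defined function on $S$, and minimality forces it to be the constant $\theta$, since for a Lagrangian surface in a Calabi--Yau the mean curvature one-form equals the differential of the Lagrangian angle.

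Next I would make the rotation explicit in the hyper-K\"ahler triple $I,J,K$ of the flat metric. A direct computation on the standard real basis gives the decomposition
\[
\Omega=\omega_J+i\,\omega_K,\qquad \omega_J(X,Y)=g(JX,Y),\quad \omega_K(X,Y)=g(KX,Y),
\]
so that taking the real part of $e^{i\theta}\Omega$ yields
\[
\Re(e^{i\theta}\Omega)=\cos\theta\,\omega_J-\sin\theta\,\omega_K=\omega_{J_\theta},\qquad J_\theta:=\cos\theta\,J-\sin\theta\,K.
\]
Since $\cos^2\theta+\sin^2\theta=1$, the endomorphism $J_\theta$ is again a metric-compatible complex structure, and in the quaternionic basis it has no $I$-component, so it is orthogonal to $I$. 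Thus the form calibrating $S$ is precisely the K\"ahler form of the orthogonal complex structure $J_\theta$.

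Finally I would invoke Wirtinger's inequality: on an oriented surface the K\"ahler form $\omega_{J_\theta}$ restricts to at most the induced area form, with equality exactly on the $J_\theta$-holomorphic surfaces. Since $S$ is calibrated by $\omega_{J_\theta}$ this equality holds, and therefore $S$ is holomorphic with respect to $J_\theta$, a complex structure orthogonal to $I$, which is the assertion.

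Given the cited results the argument is mainly bookkeeping, so I do not expect a genuine analytic obstacle. The step needing the most care is the sign and orientation accounting in the second paragraph: one must fix the normalizations of $J$ and $K$ so that $\Omega=\omega_J+i\omega_K$ holds with the stated sign, and orient $S$ so that the calibration equality delivers holomorphicity rather than anti-holomorphicity for $J_\theta$. A clean way to pin this down is to verify $\Omega=\omega_J+i\omega_K$ directly on the frame $\partial_{x_1},\partial_{y_1},\partial_{x_2},\partial_{y_2}$ with $I\partial_{x_j}=\partial_{y_j}$ and $IJ=K$, after which the residual freedom is only the rotation by $\theta$ that the statement already allows.
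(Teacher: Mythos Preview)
Your argument is correct and follows essentially the same route as the paper: the paper's justification is the discussion immediately preceding the proposition (Harvey--Lawson gives calibration by $\Re(e^{i\theta}dz^1\wedge dz^2)$, a hyper-K\"ahler rotation identifies this with a K\"ahler form, and K\"ahler forms calibrate complex curves), and your version simply makes each of these steps explicit, in particular the decomposition $\Omega=\omega_J+i\omega_K$ and the appeal to Wirtinger's inequality.
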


\section{Global theory and Applications.}

Now we address solving the integrability conditions on a Riemann surface $\Sigma$, instead of just on a simply connected domain $\mathcal D\subset \co$.  So let $\Sigma$ be a Riemann surface with a conformal background metric $\mu$ and a holomorphic cubic differential $Q$.  Here in local coordinates the affine metric is given by $2e^{2\psi}|dz|^2=e^u\mu$. Then the local equations we discuss above are of the form
\begin{equation} \label{generic-eq}
\Delta u + 2\epsilon\|Q\|^2 e^{-2u} + 2\lambda e^u -2\kappa = 0,
\end{equation}
where $\Delta$ is the Laplace operator with respect to $\mu$, $\|\cdot \|$ is the induced norm on cubic differentials, and $\kappa$ is the Gauss curvature.  Here $\lambda\in\{-1,0,1\}$ and $\epsilon=\pm1$.

The global properties of (\ref{generic-eq}) depend heavily on the signs $\epsilon$ and $\lambda$, as the maximum principle underlies the analysis in many cases.  As an illustration, we provide a proof of uniqueness in the best case for these signs---that of hyperbolic affine spheres ($\epsilon=1$ and $\lambda=-1$).
\begin{prop}
Let $\Sigma$ be a closed Riemann surface with a hyperbolic ($\kappa=-1$) background metric $\mu$, and let $Q$ be a holomorphic cubic differential on $\Sigma$.  Then there is at most one $C^2$ solution to $$\Delta u + 2\|Q\|^2e^{-2u} -2e^u +2=0.$$
\end{prop}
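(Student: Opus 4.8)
The plan is to prove uniqueness via a maximum-principle argument applied to the difference of two solutions. Suppose $u_1$ and $u_2$ are both $C^2$ solutions to
\[
\Delta u + 16\|Q\|^2 e^{-2u} - 2e^u + 2 = 0
\]
on the closed surface $\Sigma$. Set $v = u_1 - u_2$. Subtracting the two equations, I would obtain
\[
\Delta v + 16\|Q\|^2(e^{-2u_1} - e^{-2u_2}) - 2(e^{u_1} - e^{u_2}) = 0.
\]
The key structural observation is that the nonlinear terms both have a favorable sign relative to $v$: the function $t\mapsto e^{-2t}$ is strictly decreasing while $t\mapsto e^t$ is strictly increasing, which is precisely why the signs $\epsilon=1$, $\lambda=-1$ are ``good for the maximum principle'' as noted in the introduction.

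The core of the argument is to evaluate the difference equation at a point $p\in\Sigma$ where $v$ attains its maximum (such a point exists since $\Sigma$ is compact and $v$ is continuous). At an interior maximum one has $\Delta v(p)\le 0$. I would then examine the signs of the remaining two terms at $p$. If $v(p)>0$, then $u_1(p)>u_2(p)$, so $e^{-2u_1(p)} - e^{-2u_2(p)} < 0$ and $e^{u_1(p)} - e^{u_2(p)} > 0$; hence
\[
16\|Q\|^2(e^{-2u_1(p)} - e^{-2u_2(p)}) - 2(e^{u_1(p)} - e^{u_2(p)}) < 0,
\]
using $\|Q\|^2\ge 0$. Combined with $\Delta v(p)\le 0$, this forces the left-hand side of the difference equation to be strictly negative, contradicting that it equals zero. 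Therefore $v(p)\le 0$, i.e.\ $\max_\Sigma v \le 0$, so $u_1\le u_2$ everywhere. By symmetry (swapping the roles of $u_1$ and $u_2$) one gets $u_2\le u_1$, and hence $u_1\equiv u_2$.

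I do not expect a serious obstacle here, since both sign-definite nonlinear terms cooperate; the only subtlety is the borderline case $v(p)=0$. If the maximum of $v$ is exactly $0$, the strict inequality above degenerates and the contradiction is not immediate. To handle this cleanly I would prefer to organize the argument so that the strict inequality is genuinely used: assuming for contradiction that $\max_\Sigma v > 0$, the computation at $p$ yields a strict contradiction, so in fact $\max_\Sigma v\le 0$. This avoids the degenerate case entirely and is the cleanest route. The main thing to verify carefully is simply that at an interior maximum of a $C^2$ function on a manifold one indeed has $\Delta v\le 0$ for the Laplacian of the background metric $\mu$ (this is standard, as $\Delta$ here is the Laplace--Beltrami operator and the Hessian is nonpositive at a max), and that no boundary terms arise since $\Sigma$ is closed.
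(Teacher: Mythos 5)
Your proof is correct and follows essentially the same argument as the paper: both evaluate the equation at a maximum point of the difference of two solutions and exploit the monotonicity of $t\mapsto -16\|Q\|^2e^{-2t}+2e^t$ to force the maximum to be nonpositive, then swap the roles of the two solutions. The only cosmetic difference is that you phrase the key step as a contradiction from strict inequality when the maximum is positive, whereas the paper inverts the strictly monotone nonlinearity directly to conclude $u(p)\le v(p)$; both are valid.
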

\begin{proof}
Let $u,v$ be two $C^2$ solutions.  Then at the maximum point $p$ of $u-v$, we have
\begin{eqnarray*}
\Delta(u-v)(p) &\le &0,\\
\Delta u(p) &\le & \Delta v(p), \\
-2\|Q(p)\|^2 e^{-2u(p)} + 2e^{u(p)} - 2 &\le & -2\|Q(p)\|^2e^{-2v(p)} + 2e^{v(p)} - 2, \\
u(p)&\le& v(p), \\
(u-v)(p)&\le&0.
\end{eqnarray*}
(The key step is to recognize that $2\|Q\|^2e^{-2u}-2e^u-2$ is a decreasing function of $u$, and so the fourth line follows from the third.)  Since $p$ is the maximizer of $u-v$, we see that $u-v\le0$ on all of $\Sigma$. By switching the roles of $u$ and $v$, we see that $v-u\le0$ on $\Sigma$, and so $u=v$ identically on $\Sigma$.
\end{proof}

The global theories for other values of $\lambda$ and $\epsilon$ are less well behaved, and a successful application of the elliptic theory can depend on balancing the good terms versus the bad terms.  An exception is the seemingly worst case of minimal Lagrangian surfaces in $\CP^2$: $\lambda=1$ and $\epsilon=-1$.  In this case, when $\Sigma$ has abelian fundamental group and $Q$ is constant, powerful techniques of integrable systems can be brought to bear.

\subsection{Hyperbolic affine spheres} \label{has-subsec}

\subsubsection{Wang's equation}

We begin with a Riemann surface $\Sigma$ equipped with a holomorphic cubic differential $Q$ and a background conformal Riemannian metric $\mu$. The local integrability condition (\ref{aff-sph-eq-local}) becomes
\begin{equation} \label{global-has-eq}
\Delta u +2e^{-2u}\|Q\|^2 - 2e^u -2\kappa = 0,
\end{equation}
where $\Delta$ is the Laplacian with respect to $\mu$, $\|\cdot\|$ is the induced norm on cubic differentials and $\kappa$ is the Gauss curvature of $\mu$.

Before we talk about the main case of interest, we take care of one trivial case.  If $\Sigma$ is an elliptic curve with flat coordinate $z$, then each holomorphic cubic differential is of the form $Q=c\,dz^3$, where $c\in\C$ is a constant.  Also equip $\Sigma$ with the flat metric $|dz|^2$ so that $\kappa=0$ and (\ref{global-has-eq}) becomes
$$\Delta u + 2e^{-2u}|c|^2 - 2e^u = 0.$$
In this case, integration shows there is no solution for $c=0$, but there is an explicit constant solution $u = \frac13\log(8|c|^2)$ for nonzero $c$.  In this case, the structure equations can be integrated explicitly to find that the hyperbolic affine sphere is just the \c{T}i\c{t}eica-Calabi example.  See e.g.\ \cite{loftin02c}.

This problem was first formulated by  C.P.\ Wang, who studied the solutions to (\ref{global-has-eq})  on a closed hyperbolic Riemann surface, in order to produce more examples of hyperbolic affine spheres in $\re^3$.  Wang's existence proof for (\ref{global-has-eq}) is flawed, however.  The first existence proof is due to Labourie, who arrived at the same differential-geometric structures as the hyperbolic affine spheres from a different, more intrinsic point of view \cite{labourie97,labourie07}.

If $\Sigma$ is a compact Riemann surface and $\mu$ is a hyperbolic metric (i.e., $\kappa=-1$), then $\Sigma$ must have genus at least 2, and the space of holomorphic cubic differentials over $\Sigma$ has complex dimension $5g-5$. Equation (\ref{global-has-eq}) becomes
$$ \mathcal L(u)\equiv\Delta u + 2e^{-2u}\|Q\|^2 -2e^u +2 = 0.$$

\begin{prop} \label{solve-global-has-eq}
Let $\Sigma$ be a closed Riemann surface equipped with a hyperbolic metric $\mu$ and a holomorphic cubic differential $Q$. Then there is a unique solution $u$ to (\ref{global-has-eq}). The solution $u$ is $C^\infty$.
\end{prop}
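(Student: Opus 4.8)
The plan is to read this as a standard semilinear elliptic problem on the compact manifold $\Sigma$ and to obtain existence by the method of sub- and supersolutions; uniqueness is already in hand from the maximum-principle argument used in the preceding proposition, since that argument applies verbatim to the present equation. Writing the nonlinearity as $F(u) = 16\|Q\|^2 e^{-2u} - 2e^u + 2$ (where the dependence on the point of $\Sigma$ enters only through the smooth function $\|Q\|^2$), so that the equation reads $\mathcal L(u) = \Delta u + F(u) = 0$, the crucial structural feature is that $F_u = -32\|Q\|^2 e^{-2u} - 2e^u < 0$, i.e. $F$ is strictly decreasing in $u$. This monotonicity is exactly what drives both the comparison principle (hence uniqueness) and the convergence of a monotone iteration.

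The key step is to exhibit an ordered pair of barriers, and here the favourable signs make them essentially free. For the subsolution I would simply take $u_- \equiv 0$: then $\mathcal L(0) = 16\|Q\|^2 - 2 + 2 = 16\|Q\|^2 \geq 0$, so $u_-$ is a subsolution (note how the $+2$ coming from $-2\kappa$ cancels the $-2e^u$ term at $u=0$). For the supersolution I would take a large positive constant $u_+ \equiv c$; since $F(c) \leq 16(\max_\Sigma\|Q\|^2)\,e^{-2c} - 2e^c + 2$ and the $-2e^c$ forces the right-hand side to $-\infty$ as $c\to+\infty$, any sufficiently large $c$ gives $\mathcal L(c) = F(c) \leq 0$. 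By construction $u_- \leq u_+$.

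With ordered barriers in place I would run the standard monotone iteration. Fix a constant $K > \sup_{\Sigma\times[0,c]}|F_u|$, which is finite by compactness, so that $u\mapsto F(u)+Ku$ is increasing on the relevant range, and solve the linear problems $(\Delta - K)u_{n+1} = -F(u_n) - Ku_n$ starting from $u_0 = u_+$. The operator $\Delta-K$ is invertible with an order-reversing inverse (by the maximum principle, $(\Delta-K)\phi\leq0$ forces $\phi\geq0$), so the iteration preserves order and produces a decreasing sequence trapped in $[u_-,u_+]$, converging to a solution $u$ with $0\leq u\leq c$; the lower bound $u\geq 0$ keeps $e^{-2u}$ bounded so the nonlinear term stays controlled. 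Regularity is then routine bootstrapping: once $u\in C^{2,\alpha}$ (or even $W^{2,p}$), the right-hand side $2e^u-16\|Q\|^2e^{-2u}-2$ is smooth in $u$ and as smooth in the base point as $\|Q\|^2$ (which is smooth, since $Q$ is holomorphic and $\mu$ is smooth), so Schauder estimates upgrade $u\in C^{k,\alpha}\Rightarrow u\in C^{k+2,\alpha}$, giving $u\in C^\infty$.

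An equally clean alternative is the direct method: the equation is the Euler--Lagrange equation of
$$J(u) = \int_\Sigma \left( \tfrac12|\nabla u|^2 + 8\|Q\|^2 e^{-2u} + 2e^u - 2u \right) d\mu,$$
which is strictly convex (the $2e^u$ term gives strict convexity even where $Q$ vanishes), so that a minimizer, once it exists, is automatically unique. On this route the only real work is coercivity, and the main potential obstacle is controlling the mean-zero part of $u$ via a Poincar\'e/Moser--Trudinger estimate; the mean is harmless because the linear $-2u$ term (again the gift of $\kappa=-1$) dominates as the mean tends to $-\infty$, while $2e^u$ dominates as it tends to $+\infty$. Either way, because all the signs cooperate, I do not expect a genuine obstacle beyond the bookkeeping of these estimates -- this is precisely the ``best case'' for the maximum principle that the text advertises.
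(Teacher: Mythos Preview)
Your proof is correct and follows essentially the same approach as the paper: sub- and supersolutions with constant barriers, with $u_-=0$ as the subsolution and a large constant as the supersolution. The paper's version is slightly more explicit, taking the supersolution to be $\log m$ where $m$ is the positive root of $x^3-x^2-8\max_\Sigma\|Q\|^2=0$, rather than invoking ``sufficiently large $c$''; otherwise the arguments are the same, and your added details on the monotone iteration and regularity bootstrap are accurate elaborations of what the paper leaves to the reference \cite{schoen-yau94}.
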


\begin{proof}
This is a semilinear elliptic equation, and there are many straightforward techniques to show existence. We mention the method of sub and supersolutions (see e.g. Schoen-Yau \cite{schoen-yau94}).  We check $\mathcal L(0)\ge 0$  and $\mathcal L(\log m)\le0$, where $m$ is the positive root of
$$ x^3-x^2-\max_{\Sigma}\|Q\|^2  = 0.$$
Since $0\le \log m$ as well, the theory of sub and supersolutions guarantees the existence of a smooth solution $u$ between $0$ and $\log m$. We have already seen this solution to be unique.
\end{proof}

\subsubsection{Properly embedded hyperbolic affine spheres}

It is also relevant to recount the theory of properly embedded hyperbolic affine spheres in $\re^{n+1}$, which is
due to Cheng-Yau \cite{cheng-yau77,cheng-yau86} and Calabi-Nirenberg \cite{calabi-nirenberg74}.
\begin{thm} \label{classify-has}
Let $\mathcal C\subset \re^{n+1}$ be an open convex cone which is nondegenerate in that it contains no line. Let $v$ be the vertex of $\mathcal C$.  Then there is a hyperbolic affine sphere $L$ whose center is $v$ and which is asymptotic to the boundary of $\mathcal C$.  $L$ is unique up to homotheties centered at $v$, and is unique if we require the affine shape operator to be minus the identity.  The Blaschke metric on $L$ is complete.

Moreover, if $L$ is an immersed hyperbolic affine sphere in $\re^{n+1}$ with complete Blaschke metric, then $L$ is properly embedded, and there is a nondegenerate cone $\mathcal C$ with vertex the center of $L$ so that $L$ is asymptotic to its boundary.
\end{thm}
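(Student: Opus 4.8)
The plan is to translate the geometric statement into a Dirichlet problem for a singular Monge-Amp\`ere equation and to solve that, following the strategy of Cheng-Yau. Place the vertex $v$ at the origin and let $\Omega\subset\re^n$ be the bounded convex cross-section obtained by intersecting $\mathcal C$ with the affine hyperplane $\{x^{n+1}=1\}$. Using the radial-graph parametrization from the Monge-Amp\`ere discussion above, with $\lambda=-1$, I would seek the affine sphere as $f(t)=-u(t)^{-1}(t,1)$ for a convex function $u\!:\Omega\to\re$ with $u<0$ in $\Omega$; the affine-sphere condition then becomes
$$\det D^2u = (-u)^{-(n+2)}\quad\text{in }\Omega,$$
and the requirement that $f$ be asymptotic to $\partial\mathcal C$ forces the boundary condition $u=0$ on $\partial\Omega$, so that $-1/u\to+\infty$ there and $f$ escapes along the boundary rays of the cone. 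The Blaschke metric pulls back to $h=-u^{-1}u_{ij}\,dt^i\,dt^j$, so solving this Dirichlet problem produces the desired hypersurface, and the homothety ambiguity corresponds exactly to rescaling the cross-section.

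For existence I would handle the degeneracy by regularizing the singular right-hand side, solving $\det D^2u_\e=(\e-u_\e)^{-(n+2)}$ with $u_\e=0$ on $\partial\Omega$; this problem is non-degenerate and admits a unique convex solution by the continuity method for Monge-Amp\`ere equations. The crux is to obtain a priori estimates uniform as $\e\to0$: one needs a $C^0$ bound from barriers built out of a defining function of $\Omega$, a gradient estimate, and---most delicately---an interior second-derivative (Pogorelov-type) estimate. With $C^2$ control in hand the linearized operator is uniformly elliptic, so Evans-Krylov together with Schauder theory upgrade the solution to $C^\infty$, and passing to the limit yields a convex solution with $u\to0$ at $\partial\Omega$. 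Completeness of $h$ then follows by showing that the blow-up of the conformal factor $-1/u$ near $\partial\Omega$ forces every path approaching the boundary to have infinite $h$-length.

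Uniqueness up to homothety is where the favorable signs $\e=1$, $\lambda=-1$ pay off, exactly as in the compact case proved above: the nonlinearity $(-u)^{-(n+2)}$ is monotone in the direction that makes the comparison principle for the Monge-Amp\`ere operator applicable. Given two normalized solutions on the same $\Omega$, I would examine an interior extremum of $u_1-u_2$ and argue that at such a point the ordering of the Hessians is incompatible with the ordering of the right-hand sides unless $u_1\equiv u_2$; the degeneracy at $\partial\Omega$ causes no trouble because both solutions share the boundary value $0$. Normalizing the affine shape operator to $-\mathrm{id}$ then removes the remaining scaling freedom.

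For the converse I would start from an immersed hyperbolic affine sphere $L$ with complete Blaschke metric and shape operator $-\mathrm{id}$, and first establish that the immersion is proper. This is the heart of the matter and the main obstacle: completeness of the affine-invariant metric $h$ does not obviously control the Euclidean position of $L$, and one must rule out the hypersurface accumulating on itself or failing to close up at infinity. Following Cheng-Yau (and Calabi-Nirenberg), I would use the affine normal together with completeness to identify $L$ with an entire radial graph over a convex domain, so that its associated convex function $u$ again solves the Monge-Amp\`ere equation above; the set where $u<0$ then defines a convex cone $\mathcal C$ whose vertex is the center of $L$ and to whose boundary $L$ is asymptotic. Properness and the asymptotic description follow once $L$ is identified with such a graph, closing the equivalence with the existence statement.
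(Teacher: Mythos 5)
Your strategy is the right one, and it is in fact the same route the paper endorses: the paper does not prove Theorem \ref{classify-has} itself, but attributes it to Cheng-Yau \cite{cheng-yau77,cheng-yau86} and Calabi-Nirenberg \cite{calabi-nirenberg74}, records exactly your Monge-Amp\`ere formulation $\det u_{ij} = (-1/u)^{n+2}$ with $u=0$ on $\partial\Omega$ (the radial-graph picture being Gigena's \cite{gigena81} formulation of Cheng-Yau's result), and observes, as you do, that uniqueness is a direct maximum-principle comparison for this equation. Your uniqueness paragraph is correct and essentially complete: at an interior maximum of $u_1-u_2$ the Hessian ordering $D^2u_1\le D^2u_2$ gives $(-u_1)^{-(n+2)}\le (-u_2)^{-(n+2)}$, hence $u_1\le u_2$ there, and symmetry finishes the argument; this is precisely what the paper invokes.

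Two of your steps, however, are statements of theorems rather than proofs, and they are exactly the deep parts of the Cheng-Yau theory. First, completeness of the Blaschke metric $h=(-u)^{-1}u_{ij}\,dt^i\,dt^j$ does not follow merely from the blow-up of the conformal factor $(-u)^{-1}$: the eigenvalues of $u_{ij}$ in directions tangent to $\partial\Omega$ can a priori degenerate as the boundary is approached, so one needs quantitative lower bounds on $h$ along boundary-reaching paths, extracted from convexity, boundary gradient behavior, and the equation itself; this occupies a substantial part of \cite{cheng-yau77,cheng-yau86}. Second, and more seriously, in the converse direction the identification of an affine-complete immersed hyperbolic affine sphere with a properly embedded radial graph is the step where Cheng-Yau's original argument had a gap: one must show that completeness of the affine metric implies Euclidean completeness and properness, which was clarified only later by Li \cite{li90,li92}. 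You correctly flag this as the heart of the matter, but the proposal supplies no argument for it beyond the citation, so as a self-contained proof it remains open at precisely the point the literature found hardest. (For a full account of both points see \cite{loftin10}, which is where the paper itself defers.)
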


This theorem was conjectured by Calabi in \cite{calabi72}. We explain the relationship between $\mathcal C$ and $L$ a bit further here. The picture is that of the standard example of an elliptic hyperboloid, which is asymptotic to a cone over an ellipsoid.  In general, given $v$ and $L$, the open cone $\mathcal C$ can be defined as the interior of the convex hull of $L$ and $v$.

We also note the following auxiliary proposition:

\begin{prop}
By a rigid motion of $\re^{n+1}$, normalize the affine spheres so that the center $v=0$ and $S=-I$. Under the conormal map, the image of $L$ is the hyperbolic affine sphere asymptotic to the cone over the dual cone $\mathcal C^*$.
\end{prop}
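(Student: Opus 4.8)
The plan is to show that the conormal image $N(L)$ is itself a complete hyperbolic affine sphere with center the origin and shape operator $-I$, and that its asymptotic cone is precisely the dual cone $\mathcal C^*$; the uniqueness clause of Theorem \ref{classify-has} then identifies it as the asserted affine sphere. First I would assemble what is already recorded in the Dual affine spheres subsection. Since $v=0$ and $S=-I$ we have $\xi=f$, and the conormal $N\colon L\to\re_{n+1}$ satisfies $\la N,f\ra=1$ and $\la N,f_*X\ra=0$. Differentiating these two identities along $L$ gives $\la dN,f\ra=0$ and $\la dN(X),f_*Y\ra=-h(X,Y)$, from which one reads off that $N$ is an immersion whose tangent planes are annihilated (in the dual pairing) by $f$, while the position vector $N$ serves as a transverse field with second fundamental form $h$ and vanishing transversal one-form. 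In particular $f$ is the conormal of $N(L)$, so the conormal map is an involution; and $N(L)$ is a hyperbolic affine sphere with center $0$, Blaschke metric isometric to that of $L$ (hence complete), and, after the normalisation, shape operator $-I$. All of this may be quoted from the earlier discussion.

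Next I would locate $N(L)$ inside $\mathcal C^*$. For each $x\in L$ the affine tangent plane is $\{y:\la N(x),y\ra=1\}$, since it passes through $x$ (where $\la N(x),x\ra=\la N(x),f\ra=1$) and is parallel to $T_xL=\ker N(x)$. Because $L$ is convex and $\mathcal C$ is the interior of the convex hull of $L$ and the vertex $0$, every ray of $\mathcal C$ meets the convex body bounded by $L$ on the side $\la N(x),\cdot\ra\ge 1$; scaling then gives $\la N(x),y\ra>0$ for all $y\in\mathcal C$. Hence $N(x)\in\operatorname{int}\mathcal C^*$ for every $x$, so the asymptotic cone $\tilde{\mathcal C}=\operatorname{int}\operatorname{conv}(N(L)\cup\{0\})$ satisfies $\tilde{\mathcal C}\subseteq\mathcal C^*$.

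The decisive remaining step is the reverse inclusion $\mathcal C^*\subseteq\tilde{\mathcal C}$, equivalently that $N(L)$ is asymptotic to the \emph{entire} boundary $\partial\mathcal C^*$. Here I would use that each boundary ray of $\mathcal C^*$ is the direction of a supporting hyperplane of $\mathcal C$ through $0$, and that, because $L$ is asymptotic to $\partial\mathcal C$ in the sense of Theorem \ref{classify-has}, the affine tangent planes $\{\la N(x),\cdot\ra=1\}$ converge to these supporting hyperplanes as $x\to\partial\mathcal C$ along $L$; correspondingly the conormals $N(x)$, suitably rescaled, exhaust $\partial\mathcal C^*$ in the limit. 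The cleanest way to make this precise is via the Legendre-transform description of $N$ (noted in the Dual affine spheres subsection), using that the Cheng--Yau potential realising $L$ over $\mathcal C$ blows up exactly at $\partial\mathcal C$, so that its Legendre transform is the corresponding potential over $\mathcal C^*$. With both inclusions in hand, $\tilde{\mathcal C}=\mathcal C^*$, and the uniqueness in Theorem \ref{classify-has} (applied in $\re_{n+1}$, with shape operator $-I$) identifies $N(L)$ as the hyperbolic affine sphere asymptotic to the dual cone $\mathcal C^*$.

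I expect this last step to be the main obstacle, because a purely formal argument fails. Involutivity of the conormal map together with $N(L)\subset\mathcal C^*$ yields only the single inclusion $\tilde{\mathcal C}\subseteq\mathcal C^*$: applying the involution and then dualising simply turns each inclusion back into itself, so no second inclusion is gained. Equality therefore genuinely requires the analytic input describing the behaviour of the conormal map at infinity, which I would supply through the Legendre-transform picture and the Cheng--Yau asymptotics of the potential near $\partial\mathcal C$.
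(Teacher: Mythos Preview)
The paper does not supply a proof of this proposition: it is stated as an auxiliary fact immediately after the Cheng--Yau classification theorem, and the surrounding discussion simply points to the literature (Gigena, Sasaki, Cheng--Yau) for the ingredients. So there is no detailed argument in the paper to compare against.

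That said, your outline is the standard route and is essentially correct. You correctly extract from the ``Dual affine spheres'' subsection that the conormal of a hyperbolic affine sphere is again a hyperbolic affine sphere, that the Blaschke metric is preserved (so completeness transfers), and you then invoke the uniqueness clause of Theorem~\ref{classify-has}. Your direct verification that $N(L)\subset\operatorname{int}\mathcal C^*$ via the supporting-hyperplane description of tangent planes is clean and correct.

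Your identification of the genuine difficulty---that involutivity alone only reproduces the inclusion $\tilde{\mathcal C}\subseteq\mathcal C^*$ and never the reverse---is exactly right, and it is good that you did not gloss over it. Your proposed resolution through the Legendre transform of the Cheng--Yau potential is precisely how this is handled in the literature (this is Sasaki's and Gigena's contribution, cited in the paper): the solution $u$ of $\det u_{ij}=(-1/u)^{n+2}$ with zero boundary data on $\Omega$ has Legendre transform solving the same equation over the cross-section of $\mathcal C^*$, and the radial/Euclidean graphs of these two potentials are exactly $L$ and $N(L)$. Uniqueness of the Dirichlet problem (maximum principle) then forces $N(L)$ to coincide with the Cheng--Yau affine sphere in $\mathcal C^*$. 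So the analytic input you flag as necessary is indeed necessary, and the mechanism you name is the correct one.
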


The history and ingredients of the proof of this theorem are somewhat complicated, with the main results of Cheng-Yau clarified later by Gigena \cite{gigena81}, Li \cite{li90,li92}, and Sasaki \cite{sasaki80}.  We refer the reader to \cite{loftin10} for a full account.  Uniqueness is a straightforward application of the maximum principle to the elliptic Monge-Amp\`ere equation $\det u_{ij} = (-\frac1u)^{n+2}$ discussed in Subsection \ref{affine-sph-subsec} above.

\subsubsection{Convex $\rp^2$ structures}

Given a Lie group $G$ and a homogeneous space $X$ on which $G$ acts transitively, an $(X,G)$ structure on a manifold $M$ is given by a maximal atlas on $M$ of coordinate charts in $X$ whose transition functions are locally constant elements of $G$\index{$(X,G)$ structure}. So each point in $M$ has a coordinate neighborhood which looks like $X$, and these neighborhoods are glued together by elements of $G$, which we regard as automorphisms of $X$.  When $X = \rp^n$ and $G = \pgl{n+1}$, the structure is called an $\rp^n$ structure, or a real projective structure\index{real projective structure}\index{$\rp^n$ structure}.

An $\rp^n$ structure on $M$ is called \emph{convex} if is induced from a quotient of a convex domain $\Omega$ in an affine $\re^n\subset \rp^n$ by a subgroup of $\pgl{n+1}$ acting discretely and properly discontinuously\index{convex $\rp^n$ structure}\index{$\rp^n$ structure!convex}.  The $\rp^n$ structure is \emph{properly convex} if it is convex and the closure $\bar\Omega$ is disjoint from a hyperplane in $\rp^n$\index{properly convex $\rp^n$ structure}\index{$\rp^n$ structure!properly convex}.  It is a result of Kuiper that on every closed oriented surface of genus at least 2, any convex $\rp^2$ structure is properly convex \cite{kuiper54}.

The proper convexity of $\Omega\subset\rp^2$ means that the cone $\mathcal C$ over $\Omega$ does not contain any lines.  In particular, there is a unique hyperbolic affine sphere $L$ asymptotic to the boundary of $\mathcal C$ with center at the origin and shape operator minus the identity.  Under the projection from $\mathcal C \to \Omega$, $L$ is mapped diffeomorphically onto $\Omega$.  Moreover, oriented projective-linear actions on $\Omega$ lift to unimodular linear actions on $\mathcal C$. By the uniqueness of $L$, these linear actions must act on $L\subset\mathcal C$ as well. The affine invariants of $L$ are thus projective invariants on $\Omega$ which descend to the quotient surface.

The following theorem was proved independently by Labourie \cite{labourie97,labourie07} and the first author \cite{loftin01}.
\begin{thm} \label{rp2-cubic}
Let $S$ be a closed oriented surface of genus $g\ge 2$. The convex $\rp^2$ structures on $S$ are in one-to-one correspondence with pairs $(\Sigma,Q)$ where $\Sigma$ is a conformal structure on $S$ and $Q$ is a holomorphic cubic differential.
\end{thm}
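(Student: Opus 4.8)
The plan is to build the bijection by routing everything through hyperbolic affine spheres, with the Cheng--Yau classification (Theorem \ref{classify-has}) and the existence/uniqueness for Wang's equation (Proposition \ref{solve-global-has-eq}) serving as the two pillars. Write $S = \tilde S/\Gamma$ with $\Gamma \cong \pi_1 S$ acting as deck transformations on the universal cover $\tilde S$. In each direction the correspondence is manufactured from an equivariant affine sphere, and the two constructions will be shown to be mutually inverse via the relevant uniqueness statements.

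For the forward direction, start from a convex $\rp^2$ structure on $S$. By the Kac--Vinberg theorem it is properly convex, so the developing map identifies $\tilde S$ with a properly convex domain $\Omega \subset \rp^2$ and the holonomy identifies $\Gamma$ with a discrete subgroup of $\pgl{3}$ acting on $\Omega$. The cone $\mathcal{C} \subset \re^3$ over $\Omega$ contains no line, so Theorem \ref{classify-has} furnishes a unique hyperbolic affine sphere $L$ with center the origin, shape operator $-I$, and complete Blaschke metric, asymptotic to $\partial\mathcal{C}$. As recalled in Subsection \ref{has-subsec}, oriented projective actions lift to unimodular linear actions on $\mathcal{C}$, and uniqueness of $L$ forces $\Gamma$ to preserve $L$ by affine-unimodular maps; these preserve the Blaschke metric and the Pick form, so both descend to $L/\Gamma \cong S$. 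The Blaschke metric then determines a conformal structure $\Sigma$, and by the local theory (the condition $Q_{\bar z}=0$ in (\ref{aff-sph-eq-local})) the cubic tensor descends to a holomorphic cubic differential $Q$.

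For the reverse direction, given $(\Sigma,Q)$, uniformize to obtain the hyperbolic metric $\mu$ in the conformal class and apply Proposition \ref{solve-global-has-eq} to solve Wang's equation for the unique Blaschke conformal factor. Lifting $\mu$, $Q$, and the solution $u$ to $\tilde S = \mathcal{D}$ (the disk), Theorem \ref{aff-sph-integrate} integrates the structure equations to a hyperbolic affine sphere immersion $f:\mathcal{D}\to\re^3$ realizing this Blaschke metric and cubic form. Since the data are $\Gamma$-invariant and the integration is unique once an initial frame is fixed, precomposing $f$ by a deck transformation changes it by a constant unimodular linear map, giving a homomorphism $\rho:\Gamma\to\Sl3$ for which $f$ is equivariant. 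Because $\Sigma$ is compact the pulled-back Blaschke metric is complete, so the converse half of Theorem \ref{classify-has} shows $f(\mathcal{D})$ is properly embedded and asymptotic to a nondegenerate cone; projectivizing produces a properly convex $\Omega$ on which $\rho(\Gamma)$ acts, and $\Omega/\rho(\Gamma)$ is a convex $\rp^2$ structure on $S$.

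That the two constructions are inverse follows from the two uniqueness results: the affine sphere attached to a convex structure is exactly the one whose Blaschke conformal factor solves Wang's equation for the resulting $(\Sigma,Q)$, and conversely. I expect the main obstacle to be the equivariance step in the reverse direction --- checking that $\rho$ is well defined, lands in $\Sl3$, and that $\rho(\Gamma)$ acts properly discontinuously and freely on $\Omega$ with quotient precisely the starting surface $S$ --- together with the bookkeeping that matches completeness of the Blaschke metric to proper convexity through both halves of the Cheng--Yau theorem, which is the deep analytic input underlying the whole argument.
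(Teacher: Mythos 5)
Your proposal is correct and takes essentially the same route as the paper: the paper's proof of Theorem \ref{rp2-cubic} simply combines Proposition \ref{solve-global-has-eq} (existence and uniqueness for Wang's equation on a compact hyperbolic surface) with Proposition \ref{complete-has-rp2}, whose proof is exactly your two-directional construction --- Cheng--Yau's classification (Theorem \ref{classify-has}) applied to the cone over the developing image in one direction, and integration of the structure equations (Theorem \ref{aff-sph-integrate}) on the universal cover with equivariance and completeness in the other. The only cosmetic difference is that the paper packages the equivariance step through the flat connection and holonomy discussion of Subsection \ref{hol-dev-subsection}, while you argue it directly from uniqueness of the integrated frame.
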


\begin{proof}
This follows from  Propositions  \ref{solve-global-has-eq} and \ref{complete-has-rp2}.
\end{proof}

This offers a new proof of a theorem of Goldman

\begin{cor}\cite{goldman90a}
The deformation space of marked convex $\rp^2$ structures on a closed oriented surface $S$ of genus $g\ge2$ is homeomorphic to $\re^{16g-16}$.
\end{cor}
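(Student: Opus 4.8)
The plan is to read off the homeomorphism directly from the parametrization in Theorem \ref{rp2-cubic}, organizing the pairs $(\Sigma,Q)$ into a vector bundle over Teichm\"uller space. First I would identify the deformation space of marked convex $\rp^2$ structures with the total space of a bundle $\pi\!:\mathcal{V}\to\mathcal{T}(S)$, where $\mathcal{T}(S)$ is the Teichm\"uller space of marked conformal (equivalently hyperbolic) structures on $S$ and the fiber $\pi^{-1}(\Sigma)$ is the complex vector space of holomorphic cubic differentials on $\Sigma$, i.e.\ $H^0(\Sigma,K_\Sigma^{\otimes 3})$. Theorem \ref{rp2-cubic} supplies a bijection between these two sets which respects markings; the content of the corollary is that this bijection is a homeomorphism and that the target is a Euclidean space of the stated dimension.

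Next I would carry out the dimension count. Teichm\"uller space of a closed oriented surface of genus $g\ge 2$ is classically homeomorphic to $\re^{6g-6}$ and is contractible (indeed a cell), as one sees from Fenchel--Nielsen coordinates or the Bers embedding. For the fiber, the canonical bundle $K_\Sigma$ has degree $2g-2$, so $K_\Sigma^{\otimes 3}$ has degree $6g-6$; since $6g-6>2g-2$ for $g\ge 2$, Serre duality forces $H^1(\Sigma,K_\Sigma^{\otimes 3})=0$, and Riemann--Roch then gives $\dim_\C H^0(\Sigma,K_\Sigma^{\otimes 3})=(6g-6)-g+1=5g-5$. Hence each fiber has real dimension $10g-10$.

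The topological conclusion follows from the structure of $\mathcal{V}$ as a real vector bundle of rank $10g-10$ over the contractible paracompact base $\mathcal{T}(S)\cong\re^{6g-6}$: every such bundle is trivial, so $\mathcal{V}\cong\mathcal{T}(S)\times\re^{10g-10}\cong\re^{6g-6}\times\re^{10g-10}=\re^{16g-16}$.

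The main obstacle is not the dimension count but verifying that the bijection of Theorem \ref{rp2-cubic} is a \emph{homeomorphism} for the natural topologies (the $C^\infty$ topology on $\rp^2$ structures and the bundle topology on $\mathcal{V}$). Both directions require continuity estimates. Passing from $(\Sigma,Q)$ to the $\rp^2$ structure proceeds by solving Wang's equation (Proposition \ref{solve-global-has-eq}) for the Blaschke metric and then integrating the affine structure equations of Theorem \ref{aff-sph-integrate} to produce a developing map, so one needs continuous---indeed smooth---dependence of the unique solution $u$ on the data $(\mu,Q)$, via the implicit function theorem applied to the elliptic operator $\mathcal{L}$ together with elliptic regularity, and continuous dependence of the integrated developing map on its coefficients. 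The reverse map, extracting the conformal structure and the cubic differential from the affine-sphere data over a convex $\rp^2$ surface, must likewise be shown continuous. This continuous-dependence analysis is the technical heart of the argument; once it is in place, the vector-bundle triviality reasoning above finishes the proof.
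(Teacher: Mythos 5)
Your proposal is correct and follows essentially the same route as the paper: identify the deformation space via Theorem \ref{rp2-cubic} with the total space of the bundle of holomorphic cubic differentials over Teichm\"uller space, count dimensions ($6g-6$ for the base, $5g-5$ complex for each fiber), and conclude triviality of the bundle. The paper's proof is just this dimension count stated tersely; your additions (Riemann--Roch for the fiber dimension, triviality of vector bundles over a contractible base, and the continuity of the correspondence in Theorem \ref{rp2-cubic}) flesh out details the paper leaves implicit.
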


\begin{proof}
The theorem describes the deformation space as the total space of a vector bundle over Teichm\"uller space with fibers $H^0(\Sigma,K^3)$ the space of cubic differentials.  Teichm\"uller space has complex dimension $3g-3$ and is diffeomorphic to $\re^{6g-6}$. Each fiber has complex dimension $5g-5$.
\end{proof}

Theorem \ref{rp2-cubic} puts a natural complex structure on the deformation space $\mathcal G_S$ of convex $\rp^2$ structures which is invariant under the mapping class group.  There is a natural symplectic structure on this space due to Goldman \cite{goldman90b}.  It is not clear whether these two structures fit together to form a K\"ahler structure.  There is another way to attempt to construct a K\"ahler structure. Using a  form of Kodaira-Spencer correspondence, one can identify the tangent space to the deformation space with $H^1(\Sigma,\mathcal F)$, where $\mathcal F$ is a flat $\Sl3$ principal bundle induced by the $\rp^2$ structure.  The hyperbolic affine sphere induces a natural metric on this bundle which induces a metric of Weil-Petersson type on the deformation space.  This metric and the symplectic form produce an almost K\"ahler structure on $\mathcal G_S$, but it is not clear if the almost-complex structure is integrable. This theory is due to Darvishzadeh-Goldman \cite{d-goldman}, although they use an invariant hypersurface different from the hyperbolic affine sphere. Recently Q.\ Li has studied the metric on $\mathcal G_S$ derived from the hyperbolic affine sphere and has found that Teichm\"uller space sits in $\mathcal G_S$  totally geodesically \cite{qli13}.

\begin{prop} \label{complete-has-rp2}
Let $S$ be any oriented surface.  Then the properly convex $\rp^2$ structures on $S$ are in one-to-one correspondence with triples $(\Sigma,Q,e^u\mu)$, where $\Sigma$ is a conformal structure on $S$, $Q$ is a holomorphic cubic differential, and $e^u\mu$ is a complete conformal Blaschke metric corresponding to $\Sigma$ and $Q$ by (\ref{global-has-eq}).
\end{prop}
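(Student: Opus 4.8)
The plan is to construct two maps and prove they are mutually inverse, drawing the essential global input from the Cheng--Yau classification (Theorem \ref{classify-has}) and the local input from the integration result (Theorem \ref{aff-sph-integrate}). For the forward map, from a properly convex $\rp^2$ structure I would pass to the universal cover to obtain a properly convex domain $\Omega\subset\rp^2$ and a holonomy representation $\rho\colon\pi_1 S\to\pgl{3}$ with $S=\Omega/\rho(\pi_1 S)$. As in the preceding discussion of convex $\rp^2$ structures, proper convexity forces the cone $\mathcal C\subset\re^3$ over $\Omega$ to be nondegenerate, and oriented projective actions lift to unimodular linear ones, so $\rho$ lifts to $\tilde\rho\colon\pi_1 S\to\Sl3$. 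Theorem \ref{classify-has} then supplies a hyperbolic affine sphere $L$ with center $0$, shape operator $-I$, asymptotic to $\partial\mathcal C$, and complete Blaschke metric; its uniqueness clause forces $L$ to be $\tilde\rho(\pi_1 S)$-invariant, so $L$ descends to $S$. The descended Blaschke metric gives a conformal structure $\Sigma$ and a complete conformal metric $e^u\mu$, the Pick form gives a holomorphic cubic differential $Q$, and holomorphicity together with (\ref{global-has-eq}) are exactly the local integrability conditions of Theorem \ref{aff-sph-integrate} with $\lambda=-1$.

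For the backward map I would work on the universal cover $\mathcal D$, where $\Sigma$ becomes a simply connected Riemann surface, $Q$ pulls back to a holomorphic cubic differential, and $u$ pulls back to a solution of the local equation (\ref{aff-sph-eq-local}) with $\lambda=-1$. Theorem \ref{aff-sph-integrate} integrates the structure equations to an affine sphere immersion $f\colon\mathcal D\to\re^3$, unique up to the initial frame. Since $Q$ and the metric are $\pi_1 S$-invariant, comparing the frames of $f$ and $\gamma^*f$ for each deck transformation $\gamma$ yields a constant element of $\Sl3$, defining a representation $\tilde\rho\colon\pi_1 S\to\Sl3$ with respect to which $f$ is equivariant. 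Here the assumed completeness of $e^u\mu$ is decisive: by the converse half of Theorem \ref{classify-has}, the complete immersed sphere $f(\mathcal D)$ is properly embedded and asymptotic to a nondegenerate cone $\mathcal C$ with vertex $0$. Projectivizing gives a properly convex domain $\Omega=\mathbb{P}(\mathcal C)$ carrying the induced $\pgl{3}$-action, and the radial diffeomorphism $L\to\Omega$ identifies the quotient with $S$, producing the desired properly convex $\rp^2$ structure.

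The two constructions are inverse because each is pinned down by uniqueness in Theorem \ref{classify-has}: starting from a projective structure, the sphere reconstructed in the backward step is the Cheng--Yau sphere of the original cone, so the structure is recovered; starting from a triple, the sphere of the reconstructed cone is, by uniqueness, the one already integrated, so $\Sigma$, $Q$ and $e^u\mu$ are recovered. The hard part will be the global bookkeeping in the backward direction: verifying that the monodromy genuinely lands in $\Sl3$, that $\tilde\rho(\pi_1 S)$ acts properly discontinuously on $\Omega$ with quotient \emph{exactly} $S$ rather than a cover or further quotient, and that the equivariant immersion together with its completeness hypothesis feeds correctly into the converse part of Theorem \ref{classify-has}. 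By contrast the forward direction and the local integration are routine given the machinery already assembled.
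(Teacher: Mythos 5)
Your proposal is correct and its skeleton is the same as the paper's: the forward map descends the Cheng--Yau affine sphere (Theorem \ref{classify-has}) invariants to the quotient, and the backward map integrates the structure equations on the universal cover (Theorem \ref{aff-sph-integrate}), invokes completeness to apply the converse half of Theorem \ref{classify-has}, and projectivizes the resulting cone. The one place you genuinely diverge is in how the backward direction produces the group action: you compare $f$ with $f\circ\gamma$ for each deck transformation $\gamma$ and use the uniqueness clause of Theorem \ref{aff-sph-integrate} (plus invariance of the metric and of $Q$, and the determinant normalization of the frames) to extract a constant matrix $A_\gamma\in\Sl3$, so that $\gamma\mapsto A_\gamma$ is the desired representation and $f$ is equivariant by construction. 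The paper instead never compares pullbacks of $f$: it defines a flat connection $D=d+\alpha$ on $E=L\oplus T\Sigma$ over the quotient surface via (\ref{conn-form-has}), observes that flatness is exactly (\ref{global-has-eq}), and takes the holonomy of $D$ as the representation, with the equivariance $f(\beta y)={\rm hol}(\beta)f(y)$ supplied by the development--holonomy discussion of Subsection \ref{hol-dev-subsection}. Your deck-transformation argument is more elementary and self-contained (it needs only the uniqueness statement already proved), whereas the paper's route buys a direct identification of the representation with the holonomy of a flat $\Sl3$-bundle over $S$, which is what later ties these structures to representation varieties and Higgs bundles. Both arguments are complete; your closing caveats (monodromy lands in $\Sl3$, proper discontinuity of the action on $\Omega$, quotient equals $S$) are resolved exactly as you anticipate, since the action on $\Omega$ is conjugate, through the equivariant diffeomorphism $\tilde\Sigma\simeq K\simeq\Omega$, to the deck action on $\tilde\Sigma$.
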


\begin{proof}
Assume $S$ admits a properly convex $\rp^2$ structure.  Then we see $S = \Omega/\Gamma$, where $\Omega \subset\rp^2$ is properly convex and $\Gamma$ is a discrete group of projective transformations acting faithfully and properly discontinuously.  Consider the cone $\mathcal C\subset\re^3$ over $\Omega$. Then Theorem \ref{classify-has} gives a unique (properly normalized) hyperbolic affine sphere $K$ asymptotic to the boundary of $\mathcal C$. $K$ has complete Blaschke metric and $K$ is diffeomorphic to $\Omega$ under projection.  We may canonically lift the action of $\Gamma$ to $\Sl3$. Then the uniqueness of $K$ shows that it is acted upon by $\Gamma$.  All the affine invariants on $K$, such as the Blaschke metric and the cubic form, then pass to the quotient.  The Blaschke metric induces a conformal structure on $S$ canonically determined by the $\rp^2$ structure, and the cubic form becomes a holomorphic cubic differential as in Subsection \ref{affine-sph-subsec} above.

To prove the converse, assume $S$ admits a conformal structure $\Sigma$, a holomorphic cubic differential $Q$, and a complete conformal metric $e^u\mu$ satisfying (\ref{global-has-eq}). Choose a basepoint $p\in\Sigma$ and a complexified frame $F(p)\equiv\{f_z(p),f_{\bar z}(p),f(p)\}$ in $\re^3$ as in Theorem \ref{aff-sph-integrate} above.  Let $\tilde\Sigma$ be the universal cover of $\Sigma$ with basepoint $p$. Then Theorem \ref{aff-sph-integrate} shows that there is a unique hyperbolic affine sphere immersion $f$ of $\tilde \Sigma$ into $\re^3$ which satisfies the initial conditions at $p$ whose cubic form is $Q$ and whose Blaschke metric is $e^u\mu$.  The Blaschke metric on $\tilde\Sigma$ is complete by assumption, and so Theorem \ref{classify-has} shows that $K=f(\tilde\Sigma)$ is the unique hyperbolic affine sphere asymptotic to the boundary of a convex cone $\mathcal C\subset \re^3$.  For a local coordinate $z$, we may define a connection $D=d+\alpha$ on $E=L \oplus T\Sigma $ by (\ref{conn-form-has}) below, where $f_z,f_{\bar z}$ form a complexified frame on $T\Sigma$.  Equation (\ref{global-has-eq}) shows $D$ is flat, and for any loop $\beta\in\pi_1\Sigma$ based at $p$, we define the holonomy ${\rm hol}(\beta)$ to be the inverse of the parallel transport of $D$ along $\beta$.  Then the discussion in \ref{hol-dev-subsection} below implies that ${\rm hol}(\beta)$ acts on $f$ in the sense that if $y\in\tilde\Sigma$ and $\beta y$ is the image of $y$ under the deck transformation, then $f(\beta y) = {\rm hol}(\beta)f(y)$.  Thus the hyperbolic affine sphere $K=f(\tilde D)$ is acted upon by the holonomy representation ${\rm hol}(\pi_1\Sigma)$.  Now Theorem \ref{classify-has} shows ${\rm hol}(\pi_1\Sigma)$ acts on $\mathcal C$ as well.  By projecting $\mathcal C$ to a properly convex domain in $\rp^2$, we induce a convex $\rp^2$ structure on $S$.
\end{proof}

\subsubsection{Holonomy and developing map} \label{hol-dev-subsection}

For $M$ a connected $(X,G)$-manifold, choose $p\in M$ and an $X$-coordinate ball on a neighborhood of $p$.  These choices induce the \emph{development-holonomy pair}\index{development-holonomy pair}.

Let $\pi_1M$ and $\tilde M$ be respectively the fundamental group and universal cover of $M$ with basepoint $p$.
Then the developing map ${\rm dev}\!:\tilde M\to X$ is given by analytically continuing the coordinate chart around $p$ along any loop.  The holonomy map ${\rm hol}\!: \pi_1M \to G$ satisfies
\begin{equation}\label{dev-hol-diag}\begin{CD}
\tilde M @>{\rm dev}>> \RP^2 \\
@V{\gamma}VV  @VV{{\rm hol}(\gamma)}V \\
\tilde M @>{\rm dev}>> \RP^2
\end{CD}\end{equation}
Conversely, a pair of ${\rm dev}\!:\tilde M \to X$  a local diffeomorphism and ${\rm hol}\!:\pi_1M\to G$  a homomorphism so that (\ref{dev-hol-diag}) is satisfied determine an $(X,G)$ structure on $M$.

In the case of $\RP^2$ structures ($X=\RP^2$ and $G=\pgl3$), we relate the $\RP^2$ structure to the structure equations for hyperbolic affine spheres developed above.  We follow the treatment of Goldman \cite{goldman90b}.  Let $M$ be a surface.  A connection $\na$ on $TM$ is called \emph{projectively flat}\index{projectively flat connection} if its geodesics (as sets) locally coincide with geodesics of a flat torsion-free connection. If $\na$ is projectively flat and torsion-free, then it can be uniquely lifted to a flat connection (the \emph{normal projective connection}\index{normal projective connection}) $D$ on $E=TM\oplus L$, where $L$ is the trivial line bundle with a distinguished nowhere-vanishing section $f$, and for $X,Y$ tangent vector fields,
\begin{eqnarray*}
D_XY &=& \na_XY + h(X,Y)f, \\
D_X f &=& X.
\end{eqnarray*}
 Here $h$ is a symmetric tensor.  This is exactly the form of the structure equations for the hyperbolic affine sphere.

The flat connection $D$, together with a nowhere vanishing section $f$ of $L$ can be used to define the dev-hol pair as follows.  Choose a basepoint $x\in M$, and consider the universal cover $\tilde M$ and fundamental group $\pi_1M$ with respect to this basepoint.  Then let $\gamma\in\pi_1M$ be represented by a loop with basepoint $x$, and let $\beta$ be a path from $x$ to $y$ in $M$.  Lift $\beta$ to a path on $\tilde M$ going from $\tilde x$ to $\tilde y$.  Let $E_x$ be the fiber of $E$ over $x$, and let $P$ be the projection map from $E_x-\{0\}$ to the projective space $P(E_x)\sim\RP^2$.  Let $\Pi_\beta\!:E_x\to E_y$ denote the parallel transport for $D$ along $\beta$.  Since $D$ is flat, $\Pi_\beta$ is independent of $\beta$ in a homotopy class.  Then if we define
$$ {\rm dev}(\tilde y) = P(\Pi_{\beta^{-1}}(f(y))), \qquad {\rm hol}(\gamma) = \Pi_\gamma^{-1},\index{holonomy}\index{developing map}$$
 (\ref{dev-hol-diag}) is satisfied.

Now we show how to relate this to the structure equations above. Recall that for the frame $F = (f_z\,\,f_{\bar z}\,\,f)^\top$, $f$ is the immersion of the hyperbolic affine sphere.  We project $f$ from $\re^3$ to $\rp^2$ via $f\mapsto P(f)$. This is the developing map coming from the structure equations
$$ {\rm dev}'(\tilde y) = P(f(\tilde y))$$
for $\tilde y\in \tilde\Sigma$, and $f$ evolved along a path from $\tilde x$ to $\tilde y$.
\begin{prop}
There is a projective map $\chi$ from $P(\re^3)$ to $P(E_x)$ so that ${\rm dev}(\tilde y) = \chi\circ {\rm dev}'(\tilde y)$.
\end{prop}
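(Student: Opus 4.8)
The plan is to produce $\chi$ as the projectivization of an explicit linear isomorphism between the fiber $E_x$ of the normal projective bundle and $\re^3$, built from the affine sphere frame, and then to check that this single linear map conjugates the two developing maps. The guiding principle is that the defining equations of the normal projective connection $D$ on $E=T\Sigma\oplus L$,
$$D_XY = \na_XY + h(X,Y)f, \qquad D_Xf = X,$$
are, after pulling everything back to the universal cover $\tilde\Sigma$, literally the structure equations of the hyperbolic affine sphere, once one recalls that for $\lambda=-1$ the affine normal satisfies $\xi=f$ and hence $D_X\xi=X$. It is this coincidence that lets the two flat connections be identified.

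First I would work over $\tilde\Sigma$, where the affine sphere immersion $f$ and its frame $(f_z,f_{\bar z},\xi)$ are globally defined, and define a bundle map $\Phi\!:E\to\tilde\Sigma\times\re^3$ fiberwise by sending a tangent vector $Y\in T_y\Sigma$ to $f_*Y$ and the distinguished section of $L$ to $\xi(y)=f(\tilde y)$. Since $\xi$ is transverse to the tangent plane, each $\Phi_y$ is a linear isomorphism. The central claim is that $\Phi$ carries $D$ to the standard flat connection on the trivial bundle $\tilde\Sigma\times\re^3$: applying $\Phi$ to $D_XY$ and to $D_Xf$ and comparing with the affine structure equations $D_{f_*X}(f_*Y)=f_*(\na_XY)+h(X,Y)\xi$ and $D_{f_*X}\xi=f_*X$ shows the two agree term by term. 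Because parallel transport for the trivial connection on $\re^3$ is the identity, intertwining gives $\Pi_\beta=\Phi_y^{-1}\circ\Phi_x$ for every path $\beta$ from $x$ to $y$, whence $\Pi_{\beta^{-1}}=\Phi_x^{-1}\circ\Phi_y$; here flatness of $D$ and simple-connectivity of $\tilde\Sigma$ guarantee path-independence.

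With this in hand I would set $\chi=P\circ\Phi_x^{-1}$, the projective isomorphism $P(\re^3)\to P(E_x)$ induced by the linear isomorphism $\Phi_x^{-1}$. The verification is then immediate: since the distinguished section of $L$ at $y$ maps under $\Phi_y$ to $\xi(y)=f(\tilde y)$, we obtain
$${\rm dev}(\tilde y)=P\big(\Pi_{\beta^{-1}}(f(y))\big)=P\big(\Phi_x^{-1}\Phi_y(f(y))\big)=P\big(\Phi_x^{-1}(f(\tilde y))\big)=\chi\big(P(f(\tilde y))\big)=\chi\circ{\rm dev}'(\tilde y).$$
The only genuinely delicate point is the bookkeeping in the intertwining step: one must keep separate the two meanings of the symbol $f$ (the section of $L$ in the abstract picture versus the immersion into $\re^3$) and use crucially that the hyperbolic normalization $\lambda=-1$, $\xi=f$ is exactly what makes $D_Xf=X$ match $D_X\xi=X$. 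Once that matching is recorded cleanly, the remainder is formal.
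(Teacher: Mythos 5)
Your proof is correct, and its core is the same observation the paper exploits, but you package it differently. The paper's proof is a frame/ODE computation: it proves a small lemma that the inverse parallel transport $J(t)=\Pi_{\beta|_{[0,t]}^{-1}}$ satisfies $\dot J=\langle\alpha,\dot\beta\rangle J$, $J(0)=I$, observes that the complexified frame $F=(f_z\,\,f_{\bar z}\,\,f)^\top$ satisfies the identical linear ODE with the connection form (\ref{conn-form-has}), concludes $F(t)=J(t)F_0$, and then matches the $f$-components of both sides to identify the two developing maps up to the projective identification determined by $F_0$. You instead promote the underlying coincidence --- that the affine-sphere structure equations with $\lambda=-1$, $\xi=f$ are literally the defining equations of the normal projective connection $D_XY=\na_XY+h(X,Y)f$, $D_Xf=X$ --- into an invariant statement: the fiberwise isomorphism $\Phi$ ($Y\mapsto f_*Y$, $\rho\mapsto\xi$) over $\tilde\Sigma$ is parallel, i.e.\ intertwines $D$ with the trivial connection on $\tilde\Sigma\times\re^3$, so parallel transport transforms functorially, $\Pi_\beta=\Phi_y^{-1}\circ\Phi_x$, and $\chi=P\circ\Phi_x^{-1}$ drops out explicitly. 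What your route buys is conceptual economy: no parallel-transport ODE lemma, no choice of frame, and an explicit formula for $\chi$; it also makes clear that the whole proposition is nothing but naturality of parallel transport under a connection-preserving bundle map. What the paper's route buys is the explicit matrix form in the frame, which is what one actually integrates to compute the holonomy representation and which is reused in the proof of Proposition \ref{complete-has-rp2}. The only point to keep tidy in your write-up is the one you flag yourself: $\Phi$ lives over $\tilde\Sigma$ (since the immersion does), while the paper's $\Pi_\beta$ is transport in $E$ over $\Sigma$ along $\beta$; flatness of $D$ and the lifting of paths make the two transports agree, which is exactly where simple connectivity enters.
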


We prove this proposition in the next few paragraphs.

With respect to the frame $F$, consider a section $s = SF$, where $S = (s_1\,\,s_2\,\,s_3)$. For the connection matrix $\alpha$, we have $$\na s = (dS)F + S(\alpha F),$$ and so along the path $\beta$, the parallel transport equation is
$$ 0= \na_{\dot\beta} s_i = \frac d{dt}\,s_i(\beta) + s_j(\beta)\langle \alpha^j_i,\dot\beta\rangle,$$
where $\langle\cdot,\cdot\rangle$ is the pairing between one-forms and tangent vectors.
If $\beta\!:[0,1]\to M$ with $\beta(0)=x$ and $\beta(1)=y$, and if we are in a single coordinate chart in $\tilde M$, the parallel transport of $D$ along $\beta$ can be represented in an appropriate frame as $S(1)$, where $S$ is the solution to
\begin{equation} \label{parallel-transport}
 S(0) = I, \qquad \frac d{dt}\, S = - S\,\langle \alpha,\dot\beta\rangle.
 \end{equation}

\begin{lem}
Consider a frame $e_1,e_2,e_3$ of $E$ along $\beta$, and write the connection form of $D$ as $\alpha$ in this frame. Let $J(t) = \Pi_{\beta|_{[0,t]}^{-1}}$. Then $J(0)=I$, the identity transformation, and $\dot J(t) = \langle \alpha(t),\dot\beta(t)\rangle J(t)$.
\end{lem}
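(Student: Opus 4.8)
The plan is to reduce everything to the matrix $S$ already introduced in the parallel-transport equation \eqref{parallel-transport}. Recall that the solution of $S(0)=I$, $\frac{d}{dt}S=-S\langle\alpha,\dot\beta\rangle$ is precisely the frame-matrix of the forward parallel transport $\Pi_{\beta|_{[0,t]}}\colon E_x\to E_{\beta(t)}$. Since reversing a path inverts its parallel transport, $J(t)=\Pi_{\beta|_{[0,t]}^{-1}}=\bigl(\Pi_{\beta|_{[0,t]}}\bigr)^{-1}$ is represented, in the same frame $e_1,e_2,e_3$ along $\beta$, by the inverse matrix. So the first step I would take is to establish the identity $J(t)=S(t)^{-1}$.

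Granting this identity, both assertions follow immediately. At $t=0$ the restricted path $\beta|_{[0,0]}$ is constant, so $S(0)=I$ and hence $J(0)=I$. For the derivative I would simply apply the standard derivative-of-inverse formula together with \eqref{parallel-transport}:
$$\dot J = \tfrac{d}{dt}\bigl(S^{-1}\bigr) = -S^{-1}\dot S\,S^{-1} = -S^{-1}\bigl(-S\langle\alpha,\dot\beta\rangle\bigr)S^{-1} = \langle\alpha(t),\dot\beta(t)\rangle\,S^{-1} = \langle\alpha(t),\dot\beta(t)\rangle\,J(t),$$
which is exactly the claimed ODE. Note that the ordering works out so that $\langle\alpha,\dot\beta\rangle$ ends up multiplying $J$ on the \emph{left}, matching the statement; this is a direct consequence of the fact that in \eqref{parallel-transport} the matrix $\langle\alpha,\dot\beta\rangle$ multiplies $S$ on the right.

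The one genuinely delicate point — the step I would spell out rather than wave through — is the identification $J(t)=S(t)^{-1}$, i.e.\ that the frame-matrix of the reversed transport is the inverse of the frame-matrix of the forward transport. The cleanest justification is to observe that $\Pi_{\beta|_{[0,t]}}$ maps $E_x\to E_{\beta(t)}$ while $J(t)$ maps $E_{\beta(t)}\to E_x$, and their composite is the identity of $E_x$, whose frame-matrix is $I$; hence the two frame-matrices are mutual inverses. Everything after that is formal, since $\frac{d}{dt}(S^{-1})=-S^{-1}\dot S\,S^{-1}$ is routine and $S(t)$ is invertible for all $t$ because $D$-parallel transport is an isomorphism. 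I do not expect any serious analytic difficulty; the entire content is the bookkeeping of conventions inherited from \eqref{parallel-transport}.
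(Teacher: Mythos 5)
Your proof is correct, and it reaches the ODE by a genuinely different (though closely related) mechanism than the paper. The paper never forms the global inverse: it differentiates the composition (cocycle) identity of parallel transport, writing for small $h$
$$J(t+h) = \Pi_{\beta|_{[t,t+h]}^{-1}}\,J(t),$$
so that $\dot J(t) = \tfrac{d}{dh}\big|_{h=0}\Pi_{\beta|_{[t,t+h]}^{-1}}\cdot J(t)$, and then computes the derivative of the short-time \emph{reversed} transport directly from (\ref{parallel-transport}): the reversed segment has tangent $-\dot\beta$, so its transport matrix solves $\dot S = \langle\alpha,\dot\beta\rangle S$, $S(0)=I$, whose derivative at $h=0$ is exactly $\langle\alpha(t),\dot\beta(t)\rangle$. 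You instead establish the single global identity $J(t)=S(t)^{-1}$, with $S$ the forward-transport matrix of (\ref{parallel-transport}), and finish with the derivative-of-inverse formula. Both arguments rest on the same two ingredients --- that reversing a path inverts its parallel transport, and equation (\ref{parallel-transport}) --- but they localize differently. The paper's version only ever differentiates transport over an infinitesimal segment, so it needs no matrix inversion at all; your version trades the cocycle identity for matrix calculus, which has the advantage of making the left-multiplication by $\langle\alpha,\dot\beta\rangle$ in the conclusion an automatic consequence of the right-multiplication in (\ref{parallel-transport}), rather than a consequence of the order of composition of transports. The step you flag as delicate ($J=S^{-1}$, i.e.\ mutual inverses because the composite is the identity) is precisely the reversal-inversion fact the paper uses implicitly in its chain $\Pi_{\beta|_{[0,t+h]}^{-1}} = \Pi_{\beta|_{[0,t+h]}}^{-1} = \bigl(\Pi_{\beta|_{[0,t]}}\Pi_{\beta|_{[t,t+h]}}\bigr)^{-1}$, so nothing is missing; your justification of it is convention-independent and sound.
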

\begin{proof}
For small $h$,
$$ J(t+h) = \Pi_{\beta|_{[0,t]}^{-1}} = \Pi_{\beta|_{[0,t]}}^{-1} = (\Pi_{\beta|_{[0,t]}} \Pi_{\beta|_{[t,t+h]}})^{-1}  = \Pi_{\beta|_{[t,t+h]}^{-1}} J(t),$$
which implies $\dot J(t) = \frac d{dh}|_{h=0} \Pi_{\beta|_{[t,t+h]}^{-1}} J(t)$. Now the tangent vector along the path $\beta|_{[t,t+h]}^{-1}$ is $-\dot\beta$, and thus by (\ref{parallel-transport}) the parallel transport solves the equation $\dot S = \langle \alpha,\dot \beta\rangle S$ and $S(0)=I$.
\end{proof}

Now consider the specific situation above.  For the complexified frame $F = (f_z\,\,f_{\bar z}\,\,f)^\top$, $F$ evolves along a path $\beta$ by $$ \dot F = \langle \alpha,\dot\beta\rangle F,$$ where as above, the connection form
\begin{equation}
\label{conn-form-has}
\alpha = \left( \begin{array}{ccc} 2\psi_z &Qe^{-2\psi} & 0 \\ 0&0& e^{2\psi} \\ 1&0&0
\end{array}\right) dz + \left(\begin{array}{ccc} 0&0& e^{2\psi} \\ \bar Qe^{-2\psi} &2\psi_{\bar z} &0 \\ 0&1&0 \end{array} \right)d\bar z.
\end{equation}
If in addition, we set the initial condition to $F(0)=F_0$, the previous lemma implies that $F(t)=J(t)F_0$ for all $t$.

To show the two notions of developing map are the same up to projective equivalence, recall their definitions.   For the evolution of the frame $F$, the developing map ${\rm dev}$ is given by $P(f)$ for $f=e_1\in\re^3$ the component of $F$.  On the other hand, ${\rm dev}'(\tilde y) = P(J(1)(\rho(y))$ where $\rho$ is a nonvanishing section of $L\subset E$.  Recall that $E = L \oplus TM$ and $L$ is a trivial bundle.  $TM$ is spanned by the tangent vectors $f_z,f_{\bar z}$, while $L$ is spanned by $f$.  In particular, we identify $\rho(y)$ as a nonzero multiple of $f(y)$.  So ${\rm dev}'(\tilde y)$ is given by the $f$ component of $F(1)$; this amounts to specifying the first column of $F(1)$.  Since $\rho(y)$ is equivalent to $f(y)$ up to a scalar, the action of $J(1)(\rho(y))$ involves specifying the same component of $J(1)$. This shows that ${\rm dev} (\tilde y) = {\rm dev}'(\tilde y)$.

(One can also extend a projectively flat connection on $TM$ to a connection on a different bundle of rank $n+1$, the tractor bundle.  See for example \cite{baraglia-thesis} or \cite{fox13}. The normal projective connection mentioned here seems better suited to hyperbolic affine spheres.)

We should also mention that Labourie has developed this theory from a more intrinsic point of view \cite{labourie97,labourie07}.  Although Labourie's point of view is quite different, the underlying construction is the same.

\subsubsection{Noncompact and limiting cases}
On a marked closed oriented surface $S$ of genus $g$ at least 2, the identification of convex $\rp^2$ structures with pairs $(\Sigma,Q)$ still holds if one forgets the marking.  In other words, one can take the quotient of the deformation space\index{deformation space} of marked convex $\rp^2$ structures by the mapping class group action (we call this quotient the \emph{moduli space of convex $\rp^2$ structures} on $S$\index{moduli space of convex $\rp^2$ structures}).  The quotient of Teichm\"uller space $\mathcal T_g$ by the mapping class group, the moduli space of Riemann surfaces, is well known to be a complex quasi-projective orbifold $\mathcal M_g$.  The bundle of cubic differentials is an orbifold vector bundle over $\mathcal M_g$.  The Deligne-Mumford compactification $\overline{\mathcal M_g}$ is a compact orbifold formed by attaching singular Riemann surfaces with nodes locally of the form $\{zw=0\}\subset \co^2$. Each such nodal Riemann surface at the boundary of moduli space can be approximated by smooth Riemann surfaces with thin necks pinched by letting $\epsilon\to0$ for each neck of the form $\{zw=\epsilon\}\subset\co^2$.  The bundle of cubic differentials over $\mathcal M_g$ can be extended to the bundle of \emph{regular cubic differentials}\index{regular cubic differential}\index{cubic differential!regular} over $\overline{\mathcal M_g}$.  A regular cubic differential $Q$ on a nodal surface is allowed to have poles of order at most 3 at each node along each sheet $\{z=0\}$ or $\{w=0\}$.  Each such pole has a \emph{residue} $R$ so that $Q = R\,z^{-3}\,dz^3 + O(z^{-2})$.  For a regular cubic differential, the residues of the two sheets of any node must sum up to 0.

From the point of view of the Deligne-Mumford compactification, there is a natural partial compactification of the moduli space of convex $\rp^2$ structures on $S$ by adding regular cubic differentials over nodal Riemann surfaces at the boundary of the moduli space of Riemann surfaces.  The first author \cite{loftin02c} studied convex $\rp^2$ structures at the boundary of moduli from this point of view.  In particular, the residue of the cubic differential completely determines the structure of each end in terms of the holonomy and developing map.  (The structure of the ends developed in \cite{loftin02c} can be seen as a parabolic Higgs bundle developed over Riemann surfaces with punctures; this relies on Labourie's identification of the affine sphere equations and Hitchin's Higgs bundles discussed in the next subsection.) In addition, at least in generic situations, the map from pairs $(\Sigma,Q)$ to the holonomy and developing map data is continuous along a path toward the boundary of the moduli space.

From a purely geometric point of view, the ends of convex $\rp^2$ structures found in \cite{loftin02c} are discussed in Choi \cite{choi94b} and Marquis \cite{marquis12}.

Benoist-Hulin proved a partial converse to the main result of \cite{loftin02c} corresponding to the case of zero residue \cite{benoist-hulin13}.  They show that convex $\rp^2$ structures with finite volume with respect to the Hilbert metric have induced cubic differentials of residue zero (and so pole order of at most two). Marquis \cite{marquis12} had already shown that the finite-volume ends of convex $\rp^2$ structures correspond to the holonomy determined in \cite{loftin02c}. Benoist-Hulin's proof involves using Benz\'ecri's theorem on the cocompactness under the action of $\mathrm{PGL}(3,\re)$ of the space of pointed strictly convex projective domains \cite{benzecri60} to derive estimates for the affine differential geometric invariants associated to convex $\rp^2$ structures.

More recently, Benoist-Hulin have shown that on the unit disk $\mathcal D\subset\co$, cubic differentials bounded with respect to the hyperbolic metric correspond to $\rp^2$ structures which are Gromov-hyperbolic with respect to the Hilbert metric (or equivalently with respect to the Blaschke metric) \cite{benoist-hulin13a}.  One may think of the Gromov-hyperbolic convex $\rp^2$ structures as comprising an analog of the universal Teichm\"uller space. Thus this space can be parametrized by cubic differentials.

Also, Dumas-Wolf have shown that the space of convex $n$-gons in $\re^2$ modulo projective equivalence is homeomorphic to the space of polynomial cubic differentials of degree $n-3$ on $\co$ modulo holomorphic motions on $\co$ \cite{dumas-wolf13}.  For the triangle, the \c{T}i\c{t}eica-Calabi example corresponds to the cubic differential $dz^3$ on $\co$.

One may also attempt to identify boundary points of the moduli space of convex $\rp^2$ structures on $S$ by fixing a conformal structure $\Sigma$ and cubic differential $Q$.  Then we may consider a ray of cubic differentials $tQ$ as $t\to\infty$.  In \cite{loftin06}, the first author then studies the limiting holonomy configuration along all free loops in the sense of \cite{parreau00,inkang-kim05}. (This limiting holonomy is an analog of Thurston's boundary of Teichm\"uller space).

\subsubsection{Hitchin representations}\index{Hitchin representation}
If $\Sigma$ is marked Riemann surface with hyperbolic metric $\mu$ and with zero cubic differential, then the Blaschke metric is hyperbolic, and the associated hyperbolic affine sphere is an elliptic hyperboloid (by Theorem \ref{cubic-form-vanish} above).  This projects to an ellipse in $\rp^2$, which is projectively equivalent to a round disk.  Projective actions on the round disk are hyperbolic isometries under the Klein model of hyperbolic space, and thus the $\rp^2$ structure reduces to the point in Teichm\"uller space determined by $\Sigma$.  The holonomy representation then lies in $\mathrm{PSO}(2,1)\subset \mathrm{PSL}(3,\re)$.

If $S$ is a closed marked surface of genus at least 2, Theorem \ref{rp2-cubic} shows a convex $\rp^2$ structure on $S$ induces a conformal structure and a cubic differential $Q$. By varying $Q$ in a path to 0, we see that the holonomy representation $\sigma$ for the $\rp^2$ structure is homotopic to a representation in $\mathrm{PO}(2,1)$ corresponding to an element of Teichm\"uller space.  The connected component of the representation space
$$ {\rm Rep}(\pi_1S,\mathrm{PSL}(3,\re)) \equiv {\rm Hom}(\pi_1S,\mathrm{ PSL}(3,\re))/\mathrm{ PSL}(3,\re)$$
containing Teichm\"uller space in this way is called the \emph{Hitchin component} \cite{hitchin92}.  Choi-Goldman proved that all Hitchin representations are induced by convex $\rp^2$ structures \cite{choi-goldman93}, and so the Hitchin component coincides with the deformation space of convex $\rp^2$ structures on $S$.

For a fixed conformal structure $\Sigma$ on $S$, Hitchin uses Higgs bundles to parametrize this component by a pair of $(V,Q)$ of $V$ a holomorphic quadratic differential and $Q$ a holomorphic cubic differential on $\Sigma$.   Consider the Hitchin representation corresponding to $V=0$.  Labourie proved that Hitchin's cubic differential is, up to a constant factor, the same as the cubic differential of the convex $\rp^2$ structure \cite{labourie07}.  Thus it is natural to replace Hitchin's parametrization of this component by $H^0(\Sigma,K^2)\oplus H^0(\Sigma, K^3)$ with the total space of the vector bundle over Teichm\"uller space whose fiber over a Riemann surface $\Lambda$ is $H^0(\Lambda,K^3)$ the space of holomorphic cubic differentials over $\Lambda$.  See Section \ref{hod-sec} below.

The Higgs bundle formulation includes a natural twisted harmonic map from the surface to $\Sl3/\mathrm{ SO}(3,\re)$.  Labourie recognizes this map as a unimodular metric on $\re^3$ given by an orthogonal direct sum of the Blaschke metric on the hyperbolic affine sphere with a metric on the span of the position vector $f$ in which the $f$ is assigned length 1.

\subsubsection{Extensions}
Labourie also in \cite{labourie07} studies affine surfaces in $\re^3$ of constant Gaussian curvature 1 to parametrize representations of a surface group into the affine group $\mathrm{ ASL}(3,\re)$.

Recently Fox has developed a far-reaching generalization of both hyperbolic affine spheres and Einstein Weyl structures \cite{fox13}. The construction involves a pair of: 1) a projective equivalence class of torsion-free connections (such as the Blaschke connection $\na$), and 2) a conformal structure.  Fox defines the Einstein equation for such structures and studies their properties on surfaces.

\subsection{Parabolic and elliptic affine spheres}

\subsubsection{Motivation from mirror symmetry}\index{mirror symmetry}

An affine (or affine-flat) structure on a manifold $M$ is an $(X,G)$ structure with $X=\re^n$ and $G = \mathrm{AGL}(n,\re)$, the group of all affine automorphisms of $\re^n$\index{affine structure}.  Equivalently, an affine flat structure is given by a flat torsion-free connection on the tangent bundle.  The tangent bundle of an affine manifold $M$ naturally carries a complex structure: If $x=\{x^i\}$ are affine coordinates on $M$, and tangent vectors are represented by $y^i\frac{\partial}{\partial x^i}$, an affine coordinate change $x\mapsto Ax+b$ acts on the complex coordinates $z=\{z^i=x^i+\sqrt{-1}y^i\}$ by $z\mapsto Az+b$. We write $TM$ with this complex structure as $M^\co$.  A Riemannian metric $g_{ij} \,dx^idx^j$ on $M$ is called \emph{affine K\"ahler} or \emph{Hessian} if it is locally the Hessian of a convex function\index{Hessian metric}\index{affine K\"ahler metric}.  In this case the Hermitian metric $g_{ij}(x) \,dz^id\bar z^j$ is K\"ahler on $M^\co$.   $M$ is \emph{special affine} if the transition maps can be chosen in $G=\mathrm{ASL}(n,\re)$, the affine special linear group\index{special affine structure}\index{affine structure!special}. In this case there is a canonical volume form $\nu$ on $M$ preserved by the action of $G$.  On a special affine manifold $M$, an affine K\"ahler metric $g_{ij}\,dx^idx^j$ has volume form $\nu$ if and only if the corresponding K\"ahler metric on $M^\co$ is Calabi-Yau.  In local affine coordinates, for an affine potential function $\phi$, this condition is $$ \det \frac{\partial^2\phi}{\partial x^i \partial x^j} = 1.$$  The fibers of the tangent bundle are naturally special Lagrangian with the real part of the form $(\sqrt{-1})^{-n}dz^1\wedge \cdots \wedge dz^n$ as the calibration. We call such a metric on an affine manifold \emph{semi-flat Calabi-Yau}\index{semi-flat Calabi-Yau manifold}.

This construction plays an important part of the Strominger-Yau-Zaslow picture in mirror symmetry \cite{syz}.  Near the large complex structure limit in moduli, Calabi-Yau manifolds are expected to have the structure of a fibration with singularities.  The base of such a fibration is (outside a singular set) an affine manifold equipped with a semi-flat Calabi-Yau metric as above.  The fibers are special Lagrangian tori, which are quotients of the tangent spaces as above.  Then the mirror manifold is another Calabi-Yau manifold formed by considering the Legendre transform of the affine K\"ahler potential on the base and performing a Fourier transform on the fibers.  On the base, the Legendre transform creates a dual affine structure and affine K\"ahler potential, but preserves the metric.  In general there will also be instanton corrections, but we do not address this part of the theory.  See e.g. \cite{leung05} for details.

To construct the torus fibration as a quotient of the tangent bundle, a lattice bundle within the tangent bundle is needed to form each torus as a quotient.  The affine gluing maps then must preserve the lattice, and so must be in $\mathrm{ ASL}(n,\mathbb Z)$. In other words, the linear part of each affine gluing map must be represented by an integer matrix.

Gross-Wilson have made significant progress in dimension 2.  They have calculated the rescaled metric limits of certain elliptic $K3$ surfaces and found them to be semi-flat Calabi-Yau metrics on $S^2$ with singularities at 24 points \cite{gross-wilson00}.

Locally, a semi-flat Calabi-Yau metric is just a convex solution to $\det \phi_{ij} =1$. We have seen the graph of $\phi$ is a parabolic affine sphere, and in dimension 2, we can use Theorem \ref{aff-sph-integrate} above to construct parabolic affine spheres on a Riemann surface with cubic differential, as long as we can solve (\ref{aff-sph-eq-local}).  On a Riemann surface $\Sigma$ with cubic differential $Q$ and background metric $\mu$, we must solve
\begin{equation} \label{global-pas-eq}
 \Delta u + 2\|Q\|^2 e^{-2u}  -2\kappa = 0,
\end{equation}
to make $e^u\mu$ the Blaschke metric of a parabolic affine sphere.  On a compact Riemann surface, integrating the equation and Gauss-Bonnet rule out genus at least two and genus 1 except for the trivial case of $Q=0$. For genus 0, there are no nonzero cubic differentials.  If we allow $Q$ to have poles, we can find solutions on $\CP^1$ with an appropriate ansatz for the metric near the poles.  In \cite{loftin04}, for any cubic differential with poles of order 1, (\ref{global-pas-eq}) is solved for an appropriate background metric $\mu$.  The resulting parabolic affine sphere structure gives both an affine structure (the Blaschke connection $\na$ is affine-flat), and the potential for a semi-flat Calabi-Yau metric is given by the component of the parabolic affine sphere in the coordinate direction of the affine normal $\xi$.

\subsubsection{Affine structures}

The solution to (\ref{global-pas-eq}) follows from constructing an appropriate ansatz as a background metric and also sub and super solutions.  The analysis allows a detailed description of the geometry near each puncture.  In particular, the induced affine holonomy on a free loop around each pole of $Q$ is given by
$$x\mapsto \left(\begin{array}{cc} 1&1\\0&1 \end{array}\right)x,$$
which is the same local holonomy type as expected by Gross-Wilson's example.  It is much more difficult to determine when these solutions produce a flat Blaschke connection $\na$ with integral holonomy, as computing the holonomy requires solving the PDE (\ref{global-pas-eq}) and then integrating along paths away from the punctures to compute the parallel transport of $\na$.

The development of the parabolic affine sphere into $\re^3$ encodes not just the affine structure via the developing map, but also the full semi-flat Calabi-Yau data, which leads to the mirror manifold as well.  In particular, we may choose a transverse plane $X$ to the affine normal $\xi$ in $\re^3$. For the splitting of $\re^3$ induced by $X$ and $\xi$, choose projections $\pi_X$ and $\pi_\xi$ onto $X$ and $\langle\xi\rangle$ respectively. For the embedding $f$ of the parabolic affine sphere, $\pi_Xf$ is the affine developing map, while if $\pi_\xi f = \phi\xi$, $\phi$ is the affine K\"ahler potential for the metric.

For $(x^1,x^2)$ coordinates on $X$, the affine coordinates on the mirror manifold are $(y_1,y_2)=(\partial \phi/\partial x^1, \partial \phi / \partial x^2)$, and the mirror Hessian potential is  the Legendre transform $\phi^* = x^1y_1 + x^2y_2 -\phi$.

On a manifold $M$, a torsion-free flat connection $\na$ on $TM$ is equivalent to an affine structure via a construction of Koszul \cite{koszul65}.  The essential part is to reproduce the developing map from $\na$. Let $x\in M$ be a basepoint, and let $\tilde x\in\tilde M$ be a lift of $x$. If $v\in T_{\tilde x} \tilde M$, let $P_v$ be the vector field on $\tilde M$ constructed by parallel transport under $\na$, and let $\omega(P_v)=v$ define a $T_{\tilde x}\tilde M$-valued 1-form on $\tilde M$.  We check that $\omega$ is $\na$-parallel, and since $\na$ is torsion-free, $d\omega=0$. Then the map $$F(y) = \int_{\tilde x}^y \omega$$ defines the affine development $F\!:\tilde M \to T_{\tilde x}\tilde M$.  As above in Subsection \ref{hol-dev-subsection}, we may identify the developing map $F$ with the affine development $\pi_X f$ described above.

\subsubsection{Elliptic affine spheres} In \cite{lyz05,lyz-erratum}, the first author, Yau, and Zaslow study elliptic affine spheres structures over $\CP^1$ with cubic differentials $Q$ with exactly 3 poles, each of order 2.  In this case, the integrability condition is
\begin{equation} \label{global-eas-eq}
 \Delta u + 2\|Q\|^2 e^{-2u} + 2 e^u -2\kappa = 0.
\end{equation}
Note that the sign on $2e^u$ is not well suited for the maximum principle, and so we are able to solve the equation only for small nonzero $Q$.  This is a bit counter-intuitive, since the good term $\|Q\|^2e^{-2u}$ is required to be small.  But we may scale the equation to see that  $u$ solves (\ref{global-eas-eq}) if and only if $v=u-\log \delta$ solves
$$ \Delta v + 2\|Q\|^2e^{-2v} + 2\delta e^v - 2\kappa = 0.$$
This trick allows us to make the bad term $2\delta e^v$ small, and so to solve the equation. Existence is proved only for small nonzero $Q$.  It is possible that the space of  solutions to (\ref{global-eas-eq}) has a similar structure to that of (\ref{glob-ch2}) below. This would require extending the analytic techniques of \cite{huang-loftin-lucia12} to the setting of noncompact Riemann surfaces.

The elliptic affine sphere structure given by the data $(\Sigma,Q,e^u\mu)$, where $u$ solves (\ref{global-eas-eq}) naturally induces a real projective structure on $\Sigma$.  The projection $P(f)$ acts as the developing map, as in \ref{hol-dev-subsection} above.

\subsection{Minimal Lagrangian surfaces}

\subsubsection{Surfaces in $\CP^2$}

 To understand minimal Lagrangian immersions $f:\Sigma\to\CP^2$ of a compact Riemann surface, we should look at the usual
trichotomy of Riemann surfaces: genus $0$, genus $1$ or genus at least $2$. In the first case $Q_3=0$ and the harmonic
sequence theory of Eells \& Wood \cite{eells-wood83} (cf. \cite{chern-wolfson83}) applies.

For surfaces of genus $1$ the cubic differential $Q_3$ is necessarily a non-zero constant, in the sense that $dz^3$ is
globally well-defined and we may normalize $Q_3$. In principle all such tori
can be constructed using algebro-geometric data, the so-called \emph{spectral data}, by exploiting the loop group and integrable
system methods: see \cite{mcintosh03}.
This means all the solutions of the PDE \eqref{eq:LagToda} for this case can be written down in terms of abelian
functions determined by the spectral data.

For higher genus surfaces there has been no success through a direct analysis of the equation \eqref{eq:LagToda}. The only
approach which has yielded existence results is the gluing approach of Haskins \& Kapouleas \cite{haskins-kapouleas07}, motivated by the
desire to construct special Lagrangian cones in $\C^3$ whose links (intersection with $S^5$) are surfaces of odd genus at least
$3$. These links are minimal Legendrian surfaces in $S^5$: their projection down to $\CP^2$ (via the fibration
$S^5\to\CP^2$) is a minimal Lagrangian surface.
The construction glues together pieces of spheres with pieces of tori to get a smooth Legendrian, but not necessarily
minimal, surface of higher genus and then perturbs this into a genuine minimal Legendrian surface.

\subsubsection{Surfaces in $\CH^2$} \label{ch2-subsec}
The situation in $\CH^2$ is a bit different from $\CP^2$, in that we produce from an appropriate pair $(\Sigma,Q)$ of compact Riemann surface $\Sigma$ and cubic differential $Q$ an immersion of $\Sigma$ not into $\CH^2$ but into a (not necessarily complete) complex hyperbolic 4-manifold. Equivalently, we produce an immersion of the universal cover $\tilde\Sigma$ into $\CH^2$ which is equivariant under an induced discrete holonomy representation $\pi_1\Sigma\to \mathrm{SU}(2,1)$.  We would like to recover the geometry of this representation from the holomorphic data and the induced metric.  This will not always be possible, but we are able to produce many examples and to show many of these induced representations are convex-cocompact.

Given a Riemann surface $\Sigma$, a cubic differential $Q$, and a conformal metric $\mu$ on $\Sigma$, we need to solve (\ref{eq:GlobalLagToda}) to produce the immersion of $\tilde\Sigma$ into $\CH^2$ as a minimal Lagrangian surface.
\begin{equation} \label{glob-ch2}
\Delta_\mu u -2e^u -2\|Q\|^2_\mu e^{-2u} - 2\kappa_\mu=0.
\end{equation}
For compact $\Sigma$, (\ref{glob-ch2}) has no solutions on $\Sigma$ of genus 0 or 1, as integration and the Gauss-Bonnet Theorem give a contradiction.  Thus we restrict ourselves to the case of genus at least 2, and we assume the background metric $\mu$ is hyperbolic.

The following existence theorem was first proved by the authors in \cite{loftin-mcintosh13} and extended by the first author, Huang and Lucia in \cite{huang-loftin-lucia12}.  The combined existence result is
\begin{thm} \label{glob-ch2-exist}
Let $\Sigma$ be a compact Riemann surface with hyperbolic metric $\mu$ and nonzero holomorphic cubic differential $Q_0$.  For $t\in[0,\infty)$ consider $Q=tQ_0$. Then
\begin{itemize}
\item The unique solution for $t=0$ is $u=0$.
\item There is a $T_0\ge(\sqrt{27/4} \sup_\Sigma\|Q_0\|_\mu)^{-1}$ so that there is a continuous family $u_t$
 of smooth solutions to (\ref{glob-ch2}) for $t\in[0,T_0]$.
\item There is a finite $T>T_0$ so that there are no solutions to (\ref{glob-ch2}) for $t\ge T$.
\item For each $t\in(0,T_0)$, there is at least one other solution $v_t\neq u_t$ to (\ref{glob-ch2}).  These $v_t$ satisfy $\lim_{t\to0^+}\sup |v_t|=\infty$. It is not known whether these solutions $v_t$ form a continuous family.
\end{itemize}
\end{thm}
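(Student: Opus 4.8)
The plan is to handle the four assertions by four different mechanisms: the maximum principle for the $t=0$ case, an integral (convexity) estimate for non-existence, the sub/supersolution method plus the implicit function theorem for the lower branch $u_t$, and a variational mountain-pass argument together with a fold analysis for the second solution $v_t$. Throughout I set $\kappa_\mu=-1$, write $M=\sup_\Sigma\|Q_0\|_\mu^2$ and $A=\int_\Sigma\vol_\mu$, and abbreviate the left side of (\ref{glob-ch2}) with $Q=tQ_0$ as $\mathcal L_t(u)=\Delta_\mu u-2e^u-16t^2\|Q_0\|_\mu^2 e^{-2u}+2$.

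For $t=0$ the equation reads $\Delta_\mu u-2e^u+2=0$, solved by $u\equiv 0$; uniqueness follows exactly as in the hyperbolic affine sphere uniqueness Proposition, since $-2e^u+2$ is strictly decreasing, so evaluating the difference of two solutions at a maximum of $u-v$ forces $u\le v$ and, symmetrically, $v\le u$. For non-existence at large $t$ I would integrate: because $\int_\Sigma\Delta_\mu u\,\vol_\mu=0$, any solution satisfies $\int_\Sigma(2e^u+16t^2\|Q_0\|_\mu^2 e^{-2u})\,\vol_\mu=2A$. Minimizing $2e^u+16ce^{-2u}$ in $u$ (with $c=t^2\|Q_0\|_\mu^2$) gives the pointwise bound $2e^u+16ce^{-2u}\ge 3\cdot 16^{1/3}c^{1/3}$, so $2A\ge 3\cdot 16^{1/3}t^{2/3}\int_\Sigma\|Q_0\|_\mu^{2/3}\,\vol_\mu$, an explicit upper bound on $t$; hence solutions cease to exist beyond some finite $T$.

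For the lower branch, $u_+\equiv 0$ is a supersolution since $\mathcal L_t(0)=-16t^2\|Q_0\|_\mu^2\le 0$, and the constant $u_-\equiv\tfrac13\log(16t^2M)$ is a subsolution: the requirement $\mathcal L_t(u_-)\ge 0$ reduces, after minimizing $s+8t^2Ms^{-2}$ in $s=e^{u_-}$, to $3\cdot 2^{1/3}(t^2M)^{1/3}\le 1$, i.e. $t\le (3\sqrt6\,\sup_\Sigma\|Q_0\|_\mu)^{-1}$, which is precisely the claimed lower bound for $T_0$ and also guarantees $u_-\le 0=u_+$. The sub/supersolution method then gives a solution $u_t\in[u_-,0]$, smooth by elliptic regularity. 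The decisive point is that along this branch the zeroth-order coefficient of the linearization $D\mathcal L_t|_{u_t}\phi=\Delta_\mu\phi-V\phi$, with $V=2e^{u_t}-32t^2\|Q_0\|_\mu^2 e^{-2u_t}=2e^{-2u_t}(e^{3u_t}-16t^2\|Q_0\|_\mu^2)$, is nonnegative, because $u_t\ge u_-$ forces $e^{3u_t}\ge 16t^2M\ge 16t^2\|Q_0\|_\mu^2$; since $Q_0$ has zeros, $V>0$ somewhere, so $\Delta_\mu-V$ has trivial kernel and is invertible. The implicit function theorem therefore continues $u_t$ as a smooth (indeed real-analytic) family, and I define $T_0$ as the supremum of $t$ for which this nondegenerate continuation persists (so $T_0\ge(3\sqrt6\,\sup_\Sigma\|Q_0\|_\mu)^{-1}$), extending to the closed interval $[0,T_0]$ by a priori bounds and compactness; at $T_0$ the linearization acquires a kernel.

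The second solution is the genuinely hard part, and here I would use the variational formulation: solutions of (\ref{glob-ch2}) are critical points of
\[
J_t(u)=\int_\Sigma\Big(\tfrac12|\nabla u|^2+2e^u-8t^2\|Q_0\|_\mu^2 e^{-2u}-2u\Big)\,\vol_\mu,
\]
whose second variation at $u_t$ is $\int_\Sigma(|\nabla\phi|^2+V\phi^2)\,\vol_\mu$, positive definite by the above, so $u_t$ is a strict local minimum. Since $-8t^2\|Q_0\|_\mu^2 e^{-2u}$ drives $J_t\to-\infty$ as $u\to-\infty$ (the exponential beats the linear $-2u$), $J_t$ is unbounded below, giving exactly the mountain-pass geometry; the mountain-pass critical point between $u_t$ and a deep point of the valley supplies $v_t$ with $J_t(v_t)>J_t(u_t)$, hence $v_t\ne u_t$. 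The main obstacle, and the analytic core of \cite{huang-loftin-lucia12}, is verifying a Palais--Smale/compactness condition for $J_t$ in the presence of these competing exponential nonlinearities (a Moser--Trudinger-type analysis) so that the min-max level is realized. Granting this, $\lim_{t\to 0^+}\sup|v_t|=\infty$ follows by degeneration: a bounded family $v_{t_n}$ with $t_n\to 0$ would subconverge in $C^2$ to a solution at $t=0$, necessarily $0$, whereupon the nondegeneracy of $u\equiv 0$ and the implicit function theorem force $v_{t_n}=u_{t_n}$, a contradiction. Finally, the merging $u_t,v_t\to u_{T_0}$ near $T_0$ is a standard saddle-node: a Lyapunov--Schmidt reduction at the degenerate (simple) kernel reduces (\ref{glob-ch2}) to a scalar fold equation whose two roots are the two branches, and confirming the fold requires checking that the associated quadratic coefficient is nonzero, which is the remaining step to verify.
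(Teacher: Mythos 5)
Your proposal follows essentially the same route as the paper, which itself only gives an ``Idea of proof'' deferring to \cite{loftin-mcintosh13} and \cite{huang-loftin-lucia12}: maximum-principle uniqueness at $t=0$, sub/supersolutions producing exactly the bound $(3\sqrt6\,\sup_\Sigma\|Q_0\|_\mu)^{-1}$, an integral estimate for non-existence at large $t$, an Uhlenbeck-style continuity/implicit-function-theorem argument for the branch $u_t$, and a mountain-pass argument for $v_t$. The steps you explicitly leave unverified (the Palais--Smale condition, the a priori bounds needed to reach $T_0$, and the saddle-node coefficient at the bifurcation) are precisely the analytic content the paper delegates to those references, so your reconstruction is faithful to the paper's proof at the level of detail it provides.
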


\begin{proof}[Idea of proof]
The proof of the lower bound on $T_0$, the upper bound $T$, and the uniqueness for $t=0$ are in \cite{loftin-mcintosh13}, where existence is proved by the method of sub and super solutions.  The existence of the continuous family $u_t$ on all of $[0,T_0]$ is proved in \cite{huang-loftin-lucia12}, following a continuity-method argument of Uhlenbeck \cite{uhlenbeck83}. The existence of $v_t$ is proved using a mountain pass argument in \cite{huang-loftin-lucia12}.
\end{proof}

Each solution to (\ref{glob-ch2}) produces a representation of $\pi_1\Sigma\to \mathrm{ SU}(2,1)$, and the geometry of such representations is closely related to the immersed minimal Lagrangian surface in $\CH^2$.  First of all, for $Q=0$, the induced minimal Lagrangian surface is simply a totally geodesic copy of $\RH^2\subset\CH^2$. These representations are called $\re$-Fuchsian (there are also $\co$-Fuchsian representations related to totally geodesic complex submanifolds  $\CH^1\subset\CH^2$).  We are interested in representations $\rho\!:\pi_1\Sigma\to \mathrm{ SU}(2,1)$ which are \emph{almost $\re$-Fuchsian} in the sense that there is an equivariant minimal Lagrangian surface whose normal bundle is diffeomorphic to $\CH^2$ under the exponential map.  (There is a similar notion for surfaces in $\RH^3$ \cite{uhlenbeck83,huang-wang13}.)  We show in  \cite{loftin-mcintosh13} that almost $\re$-Fuchsian representations are quasi-Fuchsian, or convex-cocompact.

\begin{prop}\cite{loftin-mcintosh13,loftin-mcintosh-inprep} On a compact Riemann surface with hyperbolic metric $\mu$, cubic differential $Q$, and  a solution $u$ to (\ref{glob-ch2}), the induced representation into $\mathrm{ SU}(2,1)$ is almost $\re$-Fuchsian if and only if the norm of the cubic differential $\|Q\|_{e^u\mu}$ is bounded above by $\sqrt{2}$ on all of $\Sigma$.
\end{prop}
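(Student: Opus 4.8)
The plan is to use the definition of \emph{almost $\re$-Fuchsian} at face value: the condition is exactly that the normal exponential map $\exp^\perp\colon N\tilde\Sigma\to\CH^2$ of the lifted equivariant minimal Lagrangian immersion $f\colon\tilde\Sigma\to\CH^2$ is a diffeomorphism. I would therefore split the argument into a \emph{local} step (deciding when $\exp^\perp$ is everywhere nonsingular, i.e.\ free of focal points) and a \emph{global} step (promoting an everywhere-nonsingular $\exp^\perp$ to an honest diffeomorphism). The one piece of input I would quote repeatedly is the earlier lemma on minimal Lagrangian immersions into \Kah $4$-folds: for every unit normal $\xi$ the shape operator $\caA_f(\xi)$ is symmetric, trace-free, and has norm $2\|Q_3\|$; being symmetric and trace-free on a surface, it therefore has principal curvatures $\pm 2\|Q\|$ in the induced metric $e^u\mu$.

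For the local step I would compute the Jacobi operator $R(\,\cdot\,,\gamma')\gamma'$ along a unit-speed normal geodesic $\gamma(t)=\exp_p(t\xi)$, using that $\CH^2$ has constant holomorphic sectional curvature $-4$. Since $\Sigma$ is Lagrangian, $\xi=Jw$ for a unit tangent $w$, so the plane spanned by $\gamma'$ and the parallel transport of $w$ is holomorphic (curvature $-4$), while the two planes spanned by $\gamma'$ and the remaining totally real directions have curvature $-1$. Hence in a parallel frame the Jacobi equation diagonalises as $J''=\mathrm{diag}(4,1,1)J$. The purely normal (fibre-direction) field solves $j''=j$, $j(0)=0$, so $j(t)=j'(0)\sinh t$ never vanishes; focal points are thus detected solely by the tangential system $U''=\mathrm{diag}(4,1)U$ with $U(0)=I$, $U'(0)=-\caA_f(\xi)$, in the frame aligned to the $-4$ (holomorphic) and $-1$ (totally real) directions. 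Comparing with the constant-curvature $-1$ model $U_0(t)=\cosh t\,I-\sinh t\,\caA_f(\xi)$ gives $\det U_0(t)=\cosh^2 t-k^2\sinh^2 t$ (with $k=2\|Q\|$), which is positive for all $t$ precisely when $k\le 1$; by the Rauch/Sturm comparison theorem the genuinely more negative system $\mathrm{diag}(4,1)$ has determinant at least this, so $k\le 1$ forces no focal points along any $\gamma$. Conversely, if $k>1$ at some point I would choose $\xi=Jw$ so that an eigendirection of $\caA_f(\xi)$ lies along a totally real ($-1$) direction $v$ — achievable because the off-diagonal entry $C(w,w,v)$ is a mean-zero trigonometric function of the angle of $w$ and hence vanishes for some $w$ — producing a solution $\cosh t\mp k\sinh t$ of the $-1$-direction equation that vanishes for some $t\ne 0$, i.e.\ a focal point. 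This shows $\exp^\perp$ is everywhere nonsingular iff $k=2\|Q\|\le 1$, i.e.\ $\|Q\|_{e^u\mu}\le\tfrac12$ on all of $\Sigma$.

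For the global step I would upgrade ``local diffeomorphism'' to ``diffeomorphism.'' The induced metric $e^u\mu$ is complete (as $\Sigma$ is compact) and, by the Gauss equation together with \eqref{glob-ch2}, has Gauss curvature $K_\Sigma=-1-8\|Q\|^2<0$, so $\tilde\Sigma$ is a Hadamard surface while $\CH^2$ is itself a Hadamard manifold. I would then argue that $\exp^\perp\colon N\tilde\Sigma\to\CH^2$ is proper — via a comparison estimate showing the $\CH^2$-distance from $f(p)$ to $\exp_p(v)$ grows without bound as $|v|\to\infty$, uniformly enough to combine with properness of the immersion $f$ — and invoke Ehresmann's theorem: a proper local diffeomorphism onto the connected, simply connected $\CH^2$ is a covering, hence a diffeomorphism. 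Equivariance then makes this exactly the almost $\re$-Fuchsian structure. The reverse implication is immediate: if the representation is almost $\re$-Fuchsian then $\exp^\perp$ is a diffeomorphism, in particular an immersion with no focal points, so the necessity half of the local step yields $\|Q\|_{e^u\mu}\le\tfrac12$.

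I expect the global step to be the main obstacle. The local Jacobi computation is clean, and it is precisely the \emph{least} negative ambient sectional curvature — the $-1$ of the totally real planes, forced on us by the Lagrangian condition — that produces the sharp threshold $k=1$, equivalently $\|Q\|=\tfrac12$; the holomorphic $-4$ directions only ever help. The delicate point is establishing properness (and hence global injectivity and surjectivity) of $\exp^\perp$ at the \emph{boundary} value $\|Q\|\equiv\tfrac12$, where the spreading of Jacobi fields in the weak direction degenerates to the borderline exponential rate $e^{t}$; treating this borderline case is what justifies the non-strict inequality in the statement.
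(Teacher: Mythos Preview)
Your two-step strategy matches the paper's outline (which merely cites \cite{loftin-mcintosh13} for the local step and \cite{loftin-mcintosh-inprep} for the global one), and your local Jacobi-field analysis is correct: the curvature eigenvalues $-4,-1,-1$ along a normal geodesic, the principal curvatures $\pm 2\|Q\|$ of $\caA_f(\xi)$, and the focal-point threshold at $k=2\|Q\|=1$ are all right. One refinement: the comparison you invoke for the coupled $2\times2$ tangential block is the matrix Riccati comparison rather than scalar Rauch/Sturm; alternatively, the determinant computes explicitly as $\det U(t)=\cosh t\,[1+(2-k^2)\sinh^2 t]+a\sinh^3 t$ with $a=C(w,w,w)\in[-k,k]$, and one checks directly that this is positive for all $t>0$ and all admissible $a$ iff $k\le 1$.

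The gap is in your global step. You assume $f\colon\tilde\Sigma\to\CH^2$ is proper, but this is not known a priori --- the representation $\rho$ is not yet known to be discrete --- and even granting it, properness of $f$ together with $d(f(p),\exp_p v)=|v|\to\infty$ does \emph{not} yield properness of $\exp^\perp$: nothing prevents $p_n\to\infty$ in $\tilde\Sigma$ and $|v_n|\to\infty$ simultaneously while $\exp_{p_n}(v_n)$ stays bounded in a Hadamard manifold. The paper's route avoids this circularity by showing instead that the pullback metric $(\exp^\perp)^*g_{\CH^2}$ on $N\tilde\Sigma$ is complete; then $\exp^\perp$ is a local isometry from a complete manifold to the simply connected $\CH^2$, hence a Riemannian covering, hence a diffeomorphism. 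Completeness follows from your own Jacobi data: the radial direction contributes $dt^2$, so paths with $t$ unbounded have infinite length; on any bounded $t$-range the horizontal block $U(t)^{\!\top}U(t)$ is uniformly positive (by compactness of $\Sigma$ and continuity of the nonsingular $U$), so paths escaping in the $\tilde\Sigma$ direction also have infinite length. This argument works uniformly up to and including the borderline $\|Q\|=\tfrac12$.
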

\begin{proof}[Idea of proof]
That the exponential map of the normal bundle is an immersion is proved by a direct calculation in \cite{loftin-mcintosh13}, while in \cite{loftin-mcintosh-inprep} we show the induced complex-hyperbolic metric on the normal bundle is complete.
\end{proof}

In \cite{loftin-mcintosh13}, we show that if $t<(\sqrt{27/4} \sup_\Sigma\|Q_0\|_\mu)^{-1}$, then the solution $u_t$ satisfies $\|Q\|_{e^u\mu}\le 1/\sqrt{2}<\sqrt{2}$. By the proposition above, this shows that the induced representation is almost $\re$-Fuchsian.  It is not clear whether the almost $\re$-Fuchsian property extends all the way to solutions at $t=T_0$ (or beyond to solutions $v_t$ for $t$ near $T_0$). This is in contrast to the case of minimal surfaces in $\RH^3$, in which almost Fuchsian representations only occur for the continuous family $\tilde u_t$ for $t\in[0,T']$ with $T'<T_0$ \cite{uhlenbeck83}.  The geometry of the representations which are far from being $\re$-Fuchsian is unknown.  It is quite likely for example that the minimal Lagrangian surfaces associated to $v_t$ for $t$ near 0 may not be embedded in $\CH^2$ and the associated representations may not be discrete.

\subsection{The Toda equations and surface group representations}
Let $(\Sigma,\mu)$ be a closed hyperbolic surface of constant Gaussian curvature $-1$. Each of the four elliptic
versions of the $\fa_2^{(2)}$ Toda equations discussed above is the zero curvature equation for a flat $\spl(3,\C)$
connection on the trivial bundle over an open domain in $\C$. Each connection $1$-form is obtained by fixing the two sign
choices in
\begin{equation}\label{eq:Areal}
A = \begin{pmatrix}
\psi_z  & 0 & -\lambda e^\psi\\ Qe^{-2\psi} &  -\psi_z & 0\\ 0&  -\lambda e^\psi&0
\end{pmatrix}dz
+ \begin{pmatrix}
-\psi_{\bz}  & \pm{\bar Q}e^{-2\psi} & 0\\ 0 & \psi_\bz  & e^\psi\\ e^\psi & 0& 0
\end{pmatrix}d\bz,\quad\lambda = \pm 1.
\end{equation}
Each of the equations has a global version over $\Sigma$: one for each of the sign possibilties in
\begin{equation}\label{eq:realToda}
\Delta_\mu u+\lambda 2e^u \pm 2\|Q_3\|_\mu^2e^{-2u}  +2=0.
\end{equation}
Here $u:\Sigma\to \R$ is a globally smooth function and $Q_3$ is a cubic holomorphic differential over $\Sigma$.
When \eqref{eq:realToda} holds we have, in each local complex coordinate chart $(U_j,z_j)$ on $\Sigma$, a connection form $A_j$
obtained by taking $z=z_j$ in the appropriate local connection form above, and using the local expressions
\[
e^u\mu = 2e^{2\psi_j}|dz_j|^2,\quad Q_3 = Q_jdz_j^3,
\]
to replace $\psi$ and $Q$ in $A$. It is straightforward to check that in an overlapping chart $(U_k,z_k)$ the connexion
forms are related by the gauge transformation
\begin{equation}\label{eq:Agauge}
A_k =  c_{jk}^{-1} A_j c_{jk} + c_{jk}^{-1}dc_{jk}
\end{equation}
for the diagonal matrix
\[
c_{jk} = \diag\left( \frac{dz_j/dz_k}{|dz_j/dz_k|},\frac{\overline{dz_j/dz_k}}{|dz_j/dz_k|},1\right).
\]
Clearly this gives us a $1$-cocycle $\{(c_{jk},U_j,U_k)\}$ over $\Sigma$ with values in an $S^1$ subgroup $T\subset
\mathrm{ SL}(3,\C)$. Consequently we can apply the following well-known theorem.
\begin{thm}
Let $G$ be a Lie group with Lie algebra $\fg$, and $K\subset G$ a Lie subgroup. For a closed hyperbolic surface $\Sigma$
with universal cover $\caD$, the following items are equivalent:
\begin{enumerate}
\item an immersion $f:\caD\to G/K$ which is $\rho$-equivariant for some representation $\rho:\pi_1\Sigma\to G$;
\item a flat principal $G$-bundle $(P,\omega)$ over $\Sigma$ with a $K$-subbundle $Q\subset P$;
\item for an open cover $\{U_j\}$ of $\Sigma$, a $1$-cocycle $\{(c_{jk},U_j,U_k)\}$ with values in $K$ and
flat $\fg$-valued connection $1$-forms $(U_j,A_j)$ for which \eqref{eq:Agauge} holds.
\end{enumerate}
\end{thm}
The representation $\rho$ is only determined up to conjugacy: it is the holonomy representation for the flat connection
$\omega$ on $P$.

For the sign choices which yield the equations for minimal Lagrangian surfaces, we can take $G$ to be $\mathrm{ SU}(3)$ or $\mathrm{ SU}(2,1)$
as appropriate, and $K\supset T$ to be the isotropy subgroup of the base point $[e_3]$ used in the projective models for
$\CP^2$ and $\CH^2$ earlier. When the sign choice gives one of the two affine sphere cases, the flat connection is conjugate
(i.e., gauge equivalent by a constant gauge) to a $\spl(3,\R)$-connection, because
of \eqref{eq:sphereframe}.
Further, this conjugacy transforms the $1$-cocycle into an $\mathrm{ SL}(3,\R)$-valued cocycle (it just represents the Jacobi matrix
for real changes of coordinates over $\Sigma$), so that in the affine sphere cases the holonomy representation is conjugate
in $\mathrm{ SL}(3,\C)$ to an $\mathrm{ SL}(3,\R)$ connection. Now for $K$ we can take the stabilizer of the real line $[e_3]\in\RP^2$. After
conjugacy this contains the $1$-cocycle, and therefore we obtain a $\rho$ equivariant map to $\RP^2\simeq \mathrm{ SL}(3,\R)/K$. This
is the line generated by the affine normal $\xi$.

\subsection{Special Lagrangian submanifolds of Calabi-Yau Three-folds} \label{slag-subsec}

Four of the six types of surfaces we have studied can be viewed as dimension reductions of special Lagrangian submanifolds in Calabi-Yau three-folds, which are perhaps the prime objects of study in the Strominger-Yau-Zaslow picture of mirror symmetry.  One of the main unsolved issues in this theory is the structure of singularities of the affine base in dimension 3.  It is believed these singularities are of codimension 2, and so should represent a graph in a 3-manifold.  As we will see in the next paragraph, a slight extension of the parabolic affine sphere structure in dimension 2 provides a model along the edges of the graph.  The vertices, however, are much more complicated, and below we recount details of a program to describe the structure near trivalent vertices of the graph \cite{lyz05,lyz-erratum}.  There are still many open questions.

First of all, there is a trivial case.  A parabolic affine sphere in $\re^3$ is given by the graph of a convex solution $\phi(x^1,x^2)$ to a Monge-Amp\`ere equation $\phi_{11}\phi_{22}-\phi_{12}^2=1$. Then we may adjoin a third variable $x^3$ and consider $$\tilde\phi(x^1,x^2,x^3) = \phi + \sfrac12 (x^3)^2.$$  Then $\tilde\phi$ is a convex 3-dimensional solution to the Monge-Amp\`ere equation.  This construction allows the point singularities of the parabolic affine sphere structures above to be extended to singularities along line segments in dimension three.

Secondly, the proper affine spheres can be extended radially to produce a semi-flat Calabi-Yau metric conically on a three-fold.  The basic example is the quadric elliptic affine sphere $\mathcal S =\{\|x\|^2=1\}$ in $\re^3$. Then the function equal to $\frac12$ on $\mathcal S$ and radially homogeneous of order 2 is $\phi(x)=\frac12\|x\|^2$, which of course solves the Monge-Amp\`ere equation.  Baues-Cort\'es proved that this construction works for any elliptic affine sphere centered at the origin \cite{baues-cortes03}.   This construction promotes an elliptic affine sphere structure on $\CP^1$ minus 3 points to a semi-flat Calabi-Yau structure on a ball in $\re^3$ minus a trivalent vertex of a graph \cite{lyz05,lyz-erratum}.  The affine structure is not the same as that on $\re^3$, as there is some holonomy around nontrivial loops.  Recall that for the purposes of the Strominger-Yau-Zaslow picture, the holonomy should be integral.  But constructing this holonomy from the data on the elliptic affine sphere is quite difficult.  There are some partial results in \cite{dunajski-plansangkate09}, but no one knows how to reconstruct the full holonomy, or whether it is integral.

There is an analog of the result of Baues-Cort\'es for hyperbolic affine spheres in $\re^3$ as well.  Given a local hyperbolic affine sphere $H$ centered at the origin, let $r$ be the homogeneous function of degree 1 from the cone over $H$ to $(0,\infty)$ which is equal to 1 along $H$.  Then there is a function $f$ so that $\phi(x^1,x^2,x^3)=f(r)$ is convex and satisfies the Monge-Amp\`ere equation $\det \phi_{ij}=1$ near 0 in the cone over $H$ \cite{lyz-erratum}.  Thus hyperbolic affine sphere stuctures produced over $\CP^1$ minus three points in \cite{loftin02c} can be extended to a semi-flat Calabi-Yau structure on a ball minus a trivalent ``Y" vertex in dimension three.  The holonomy question remains open in this case; the holonomies of these examples seem to be unknown, and it is unclear whether there are nontrivial examples with integer holonomy among them.

There is also a cone construction which takes minimal Lagrangian surfaces in $\CP^2$ and to special Lagrangian cones in $\C^3$.  In this case, there is no reduction to an affine base, and the analog of the holonomy consideration above is not present: the complex structure on $\C^3$ is smooth.  The cone itself does have an isolated singularity at the cone point.  The minimal Lagrangian surfaces discussed here provide many examples of singularities of special Lagrangian 3-folds, but we are  far from classifying all possible singularities. For example, even in the case of cones over tori, there seems to be no good moduli space. It must incorporate a countable union of real families of every dimension: see for example \cite{carberry-mcintosh04}.

\section{Higher-order differentials} \label{hod-sec}

Let $S$ be an oriented closed marked surface of genus at least two.  For any split real simple Lie group $G$ with trivial center, Hitchin uses Higgs bundles to parametrize a component of the representation space ${\rm Rep}(\pi_1S,G)$ \cite{hitchin92}.  Then for a conformal structure $\Sigma$ on $S$, the representations in the Hitchin component are parametrized by holomorphic sections
$$ \bigoplus_{i=1}^r H^0(\Sigma, K^{m_i+1})$$
where $r$ is the rank of $G$ and the $m_i$ are the exponents of $G$.  Since one is always an exponent of $G$, we set $m_1=1$ and there is natural quadratic differential in the first slot.  Considering the quadratic differential as a cotangent vector to Teichm\"uller space $\mathcal T$, we can instead consider bundle $\mathcal B_G$ over Teichm\"uller space whose fibers are $$\bigoplus_{i=2}^r H^0(\Sigma, K^{m_i+1}),$$
and consider the Hitchin representation corresponding to $\Sigma\in\mathcal T$ and holomorphic sections of the form $$(0,\sigma_2,\dots,\sigma_r), \qquad \sigma_i\in H^0(\Sigma,K^{m_i+1}).$$
This forms a map from $\mathcal B_G$ to the Hitchin component.
For $G=\mathrm{ PSL}(n+1,\re)$, we have each exponent $m_i=i$ and Labourie proved this map is surjective \cite{labourie08}.  This surjectivity was extended to several other groups by Baraglia \cite{baraglia-thesis}.

In the case of $G=\mathrm{ PSL}(3,\re)$, we have $m_2=1$ and so $\mathcal B_G$ has fibers $H^0(\Sigma,K^3)$. Labourie has shown that the Hitchin's cubic differential is up to a constant the same as the cubic form from the hyperbolic affine sphere, and that the corresponding map
$$\mathcal H\!:\mathcal B_G\to {\rm Rep}(\pi_1\Sigma,G)$$
is a homeomorphism \cite{labourie07}.  The proof heavily involves the machinery of the affine differential geometry: A Hitchin representation determines a convex $\rp^2$ structure by \cite{choi-goldman93}. The convex $\rp^2$ structure induces a hyperbolic affine sphere, which comes with a conformal structure and a holomorphic cubic differential \cite{loftin01,labourie97,labourie07}.  This conformal structure and cubic differential (up to a constant) coincide with Hitchin's.

Labourie \cite{labourie06a} and Goldman-Wentworth \cite{goldman-wentworth07} conjecture that this map $\mathcal H$ is a homeomorphism for general $ G$.  This is an open question except in the case of $G=\mathrm{ PSL}(2,\re)$ and $G=\mathrm{ PSL}(3,\re)$.

In his thesis, Baraglia has investigated Hitchin representations for the other rank-2 simple Lie groups $\mathrm{ PSp}(4,\re)$ and $G_2$ \cite{baraglia-thesis}.  In this case, Hitchin's Higgs bundles can be shown to correspond to the Toda lattice theory much as we have explained above.  Baraglia also explains the case of $\mathrm{ PSL}(3,\re)$ from the point of view of the Toda theory, and thus offers a new perspective in addition to \cite{loftin01,labourie97,labourie07}.  In particular, Baraglia has developed a theory of quartic differentials for Hitchin representations into $\mathrm{ PSp}(4,\re)$, and a theory of sextic differentials for Hitchin representations into the split real form of $G_2$.  The Toda theory for compact real forms has been applied to minimal surfaces in \cite{bolton-pw95}.  The geometry behind Toda-type representations for other real forms of general simple Lie groups is not well-developed.

\frenchspacing
\bibliographystyle{abbrv}
%\bibliography{thesis}
\def\cprime{$'$}

\end{document}